\numberwithin{equation}{section}
\newtheorem{theorem}{Theorem}[section]
\newtheorem{proposition}{Proposition}[section]
\newtheorem{lemma}[theorem]{Lemma}
\newtheorem{corollary}[theorem]{Corollary}
\newtheorem{claim}[theorem]{Claim}
\theoremstyle{definition}
\newtheorem{definition}[theorem]{Definition}
\theoremstyle{remark}
\newtheorem{remark}[equation]{Remark}
\def\QSet{\mbox\textup{kern.24em \vrule width.03em height1.48ex depth-.051ex \kern-.26em Q}}
\def\S{{\mathbf S}}
\def\M{{\mathcal M}}
\def\PSet{\mbox{\rm I\kern-.22em P}}
\def\B{{\mathbf B}}
\def\P{{\mathbf P}}
\def\Q{{\mathbf Q}}
\def\I{{\mathbf I}}
\def\supp{{\rm supp}}
\def\R{{\mathbb R}}
\def\T{{\mathbf T}}
\def\I{{\mathbf I}}
\def\J{{\mathbf J}}
\def\<{{\langle}}
\def\>{{\rangle}}
\def\dist{\operatorname{dist}}
\def\size{\operatorname{size}}
\def\density{\operatorname{density}}
\def\Z{{\mathbb Z}}
\def\supp{{\,supp}}
\def\f{{{\bf f}}}
\title {Weighted bounds for variational Fourier series}
 \subjclass[2000]{Primary: 42B20 Secondary: 42B25, 42B35}
 \keywords{weights, Carleson, pointwise convergence, Fourier series, variation, L\`epingle inequality}
\author{Yen Do}
\address{ Department of Mathematics, Yale University, New Haven CT 06511, USA}
\email {yenquang.do@yale.edu}
\thanks{Research supported in part by grant NSF-DMS 1201456. Studia Math, to appear.}
\author{Michael Lacey}  
\address{ School of Mathematics, Georgia Institute of Technology, Atlanta GA 30332, USA}
\email {lacey@math.gatech.edu}
\thanks{Research supported in part by grant NSF-DMS 0968499 
	and  a grant from the Simons Foundation (\#229596 to Michael Lacey).}
\begin{document}
\begin{abstract}
{For $1<p<\infty$ and for weight $w$ in $A_p$, we show that the $r$-variation of the Fourier sums of  any function $f$ in $L^p(w)$ is finite a.e. for $r$ larger than a finite constant depending on $w$ and $p$. The fact that the variation exponent depends on $w$ is necessary. This strengthens previous work of Hunt--Young and is a weighted extension of a variational Carleson theorem of Oberlin--Seeger--Tao--Thiele--Wright. The proof uses weighted adaptation of phase plane analysis and a weighted extension of a variational inequality of L\'epingle.}
\end{abstract}

\maketitle

\section{Introduction} \label{s.intro}
For a measurable function $f$ on $[0,1]$, let $Sf$ denote the maximal Fourier sum:
 $$Sf(x) := \sup_{n} |(S_n f)(x)|  \qquad  , \qquad S_n f(x):=\sum_{|k|< n} \widehat f(k) e^{i2\pi kx}  \,.$$
Here, $ \widehat f (k) = \int _{0} ^{1} f (x) \operatorname e ^{-i 2 \pi kx} \; dx$ is the $ k$th Fourier coefficient, and 
by   convention,  $S_n f=0$ for $n\le 0$. (Here we use strict inequality $|k|<n$ in the definition of $S_n$ for the convenience of the transference argument in Section~\ref{s.transference}.) 

By the Carleson--Hunt theorem \cites{MR0199631,MR0238019}, $S$ is bounded on $L^p$ for $1<p<\infty$, which leads to a.e.\thinspace convergence of the Fourier series of functions in $L^p$. See also Sj{\"o}lin \cite{MR0241885} for the Walsh case, and \cites{fefferman, lacey-thiele-carleson} for alternative proofs. More quantitative information about the convergence rate of Fourier series has been obtained by Oberlin--Seeger--Tao--Thiele--Wright  \cite{oberlin-et-al}, via bounds on a strengthening of $S$. To formulate this strengthening of $S$, we first recall the $r$-variation norm of a sequence $(a_n)_{n\in \Z}$. If $0< r <\infty$ then
$$\|(a_n)\|_{V^r} := \sup_{M, N_0<\dots<N_M} \Bigl[|a_{N_0}|^r + \sum_{j=1}^M |a_{N_j}  -  a_{N_{j-1}}|^r\Bigr]^{1/r}\,,$$
and for $r=\infty$ we have $\|(a_n)\|_{V^\infty} = \sup_n |a_n|$. It is clear that if $\|(a_n)\|_{V^r}$ is finite for some $r<\infty$ then  $(a_n)$ is a Cauchy sequence and therefore is convergent; the finiteness of $\|a\|_{V^r}$ may be considered as a quantitative measurement of the convergence rate of  $(a_n)$. The variational strengthening of $S$ considered in \cite{oberlin-et-al} is  the following operator
\begin{equation}\label{e.varsum}
S_{[r]} f(x) = \sup_{M, N_0<\dots<N_M} \Bigl[\sum_{j=1}^M |S_{N_j} f(x) -  S_{N_{j-1}}f(x)|^r\Bigr]^{1/r}\,,
\end{equation}
and it was shown in \cite{oberlin-et-al} that, for $1<p<\infty$, $S_{[r]}$ is bounded in $L^p([0,1])$ if $r>\max(2,p')$.

Convergence of Fourier series in non-Lebesgue settings was also considered by Hunt--Young~\cite{hunt-young}, where it was shown that $S$ is bounded on $L^p(w)$ for any $A_p$ weight $w$, $1<p<\infty$. See also \cite{MR2115460} for extensions to more generalized settings. Recall that a positive a.e.\thinspace  weight $w$ is in $A_p$ if uniformly over intervals $I$ we have
$$ [w] _{A_p} := \sup_{I} \frac{1}{|I|}\Big[\int_I  w(x)dx\Big] \Big[\frac{1}{|I|}\int_I  w(x)^{-1/(p-1)} dx \Big]^{p-1} < \infty  \, .$$

Our aim in this paper is to strengthen the results of \cite{hunt-young} and \cite{oberlin-et-al} by considering weighted estimates for $S_{[r]}$.

\begin{theorem}\label{t.maincor} Let $1<p<\infty$ and $w\in A_p$. Then there is an $ R= R (p,[w] _{A_p})<\infty$ such that for all $r\in (R,\infty]$ we have
\begin{equation}\label{e.maincor} \|S_{[r]} f \|_{L^p([0,1],w)} \le C \|f\|_{L^p([0,1],w)}  
\end{equation}
for some constant $C$ depending only on $w$, $p$, $r$. 
\end{theorem}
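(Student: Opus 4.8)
The plan is to run the time--frequency (phase-plane) method, combining the weighted Carleson bound of Hunt--Young, the variational refinement of Oberlin--Seeger--Tao--Thiele--Wright, and a weighted L\'epingle inequality. First I would use the transference argument of Section~\ref{s.transference} to reduce to the analogous bound for the variation-norm Carleson operator on the line: with $T_Nf(x)=\int_{\xi<N}\widehat f(\xi)\,e^{2\pi i x\xi}\,d\xi$, it suffices to show $\big\|\,\|T_Nf\|_{V^r_N}\big\|_{L^p(\mathbb R,w)}\lesssim\|f\|_{L^p(\mathbb R,w)}$ for $w\in A_p(\mathbb R)$ and $r$ above a threshold (or to prove the Walsh-model analogue). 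A standard discretization then replaces $T$ by a model operator given by a finite sum over tiles in a fixed grid, with bounds required uniformly in the tile collection, so everything comes down to estimating tile sums once the tiles are organized into trees and forests.

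Next I would decompose the frequency parameter dyadically, writing $\Delta_k=S_{2^{k+1}}-S_{2^k}$, and split the $r$-variation of $N\mapsto S_Nf$ into the \emph{long} variation over the lacunary points $N=2^k$, namely $\big\|\big(\sum_{k<k_0}\Delta_kf\big)_{k_0}\big\|_{V^r}$, and the \emph{short} variation inside each dyadic block, $\sup_{k_0}\big\|(S_N-S_{2^{k_0}})f\big\|_{V^r_{2^{k_0}\le N<2^{k_0+1}}}$. The short part is a superposition of single-block Carleson-type operators, to be treated by the weighted phase-plane argument in its variational form; the long part is a variational Littlewood--Paley object, handled separately via a weighted L\'epingle inequality: for $w\in A_p$ and $r>2$, $\|(\mathbb E_nf)_n\|_{L^p(w;V^r)}\lesssim\|f\|_{L^p(w)}$ for martingale averages, transferred to the family $\sum_{k<k_0}\Delta_k$ through $A_p$-weighted Littlewood--Paley theory.

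For the short part and the model operator the core is a weighted single-tree estimate: bound the $r$-variation of the partial sums restricted to one tree by a weighted ``size'' (an $L^2(w)$ Carleson-measure quantity) times a power of a weighted ``density'' built from $w$ and the dual weight $w^{1-p'}$, with a constant finite as soon as $r$ exceeds a value depending on $p$ and $[w]_{A_p}$. I would then organize the tiles into forests and sum the resulting geometric series over size and density levels --- whose ratio is controlled by the $A_p$ constant --- to obtain a restricted-type estimate, which I would upgrade to \eqref{e.maincor} by interpolation with change of weights. (Alternatively: prove the case $p=2$ for all $w\in A_2$ with an explicit $R([w]_{A_2})$ and conclude by weighted extrapolation.)

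I expect the delicate point to be precisely this weighted single-tree variational estimate together with the bookkeeping of exponents. In the unweighted theory $r>\max(2,p')$ suffices, but a weight distorts the effective local integrability in the tree estimate by an amount governed by $[w]_{A_p}$, so one must take $r$ large enough that the relevant $\ell^r\hookrightarrow\ell^2$ embeddings and the summation over forests still converge. Making this quantitative --- so that $R<\infty$ for every fixed $A_p$ weight, yet necessarily $R\to\infty$ as $[w]_{A_p}\to\infty$ --- is exactly what forces the variation exponent to depend on $w$, as the abstract claims.
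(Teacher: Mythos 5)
Your broad strategy---transference to the line, time--frequency discretization, a weighted size and a weighted density built from $w$ and $w^{1-p'}$, a weighted L\'epingle inequality, and a restricted--weak-type endgame---is the right one and matches the paper's machinery. But a few things differ from, or go against, what the paper actually does.

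First, the paper does not use a top-level long/short variation split. It discretizes the full variational operator $C_{[r]}$ directly into the model operator $C_{r,\P}$ (Section~\ref{s.discretization}), so all jumps---across and within dyadic frequency blocks---are carried through the tree/forest machinery simultaneously. The weighted L\'epingle inequality (Lemma~\ref{l.weightLepingle}) appears only \emph{inside} the tree estimate (Lemma~\ref{l.tree-est}), in the case of a $2$-overlapping tree, which is precisely the configuration in which the restricted variational sum looks like a Littlewood--Paley variation after conjugating by a modulation; the $2$-lacunary tree is handled by a separate geometric argument (at most one scale can be active at each $x$). Your proposed organization, with the long variation peeled off globally and the short variation treated as a ``superposition of single-block Carleson operators,'' is not obviously wrong, but it does not actually decouple the two pieces: the short-variation part is still a full (vector-valued) Carleson-type operator requiring the same forest decomposition, and you would need a weighted version of the standard $V^r \lesssim V^r_{\mathrm{long}} + (\ell^r \text{ of } V^r_{\mathrm{short}})$ comparison compatible with the $A_p$ structure. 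The paper's route sidesteps this bookkeeping.

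Second, your ``alternative'' of proving the $p=2$ case with an explicit $R([w]_{A_2})$ and extrapolating does not work, and the paper itself explains why. Rubio de Francia extrapolation requires a \emph{fixed} sublinear operator to be bounded on $L^2(w)$ for \emph{all} $w\in A_2$ (with constant depending only on $[w]_{A_2}$). But if $R$ genuinely depends on $[w]_{A_2}$, then no single $S_{[r]}$ is bounded for all of $A_2$; and if it were, extrapolation would transfer that single $r$ to all $p\in(1,\infty)$ with Lebesgue measure, contradicting the example in \cite{oberlin-et-al}. This is exactly the contradiction argument in Section~\ref{s.intro}, so it cannot double as a method of proof.

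Third---and this is the one piece of the paper's own proof of Theorem~\ref{t.maincor} that your sketch misses entirely---the theorem is deduced in one line from Theorem~\ref{t.main}: since $A_p$ is an open condition, $w\in A_p$ implies $w\in A_q$ for some $q<p$ depending on $[w]_{A_p}$, and Theorem~\ref{t.main} then gives the bound for $r>\max(2q,\,pq/(p-q))$. This openness step is exactly where the finite threshold $R=R(p,[w]_{A_p})$ comes from; your heuristic ``take $r$ large enough that the $\ell^r\hookrightarrow\ell^2$ embeddings still converge'' gestures at the right phenomenon but never identifies the explicit mechanism or the explicit range.
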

As remarked above, Theorem~\ref{t.maincor} gives more quantitative information about the  convergence of Fourier series than \cite{hunt-young} (which corresponds to the endpoint $r=\infty$).  Theorem~\ref{t.maincor} follows  from

\begin{theorem}\label{t.main} Let $1 < p < \infty $ and $w\in A_q$ for some $q\in [1,p)$. Then for $r>\max(2q,\frac{pq}{p-q})$ it holds that 
\begin{equation}\label{e.main}\|S_{[r]} f\|_{L^p([0,1],w)} \le C \|f\|_{L^p([0,1],w)}  
\end{equation}
for some constant $C$ depending only on $w$, $p$, $q$, $r$. 
\end{theorem}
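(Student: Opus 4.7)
The plan is to follow the standard phase-plane/time-frequency strategy for Carleson's theorem, in the variational form of Oberlin--Seeger--Tao--Thiele--Wright, but with all size/energy/mass quantities taken with respect to the weight $w$. First I would invoke the transference argument of Section~\ref{s.transference} to replace $S_{[r]}$ on $[0,1]$ by its analogue on $\R$, reducing \eqref{e.main} to an $L^p(\R, w)$ bound for a variational Carleson operator on the line. Linearizing the outer $r$-variation norm by a measurable selection of sequences $N_0(x)<N_1(x)<\cdots<N_{M(x)}(x)$, and then frequency-truncating and discretizing via a standard wave-packet decomposition, I would pass to a model sum indexed by tiles $P = I_P \times \omega_P$ in the Heisenberg-type product grid $\D$. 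The task becomes: for each tile collection $\P$ bound the vector-valued operator
\[
\Bigl\| \Bigl( \sum_{P\in\P} \varepsilon_P(x)\<f,\phi_P\>\phi_P(x)\mathbf{1}_{\omega_P}(N(x))\Bigr)_{N_0<\cdots<N_M} \Bigr\|_{L^p(w;\,V^r)},
\]
which is the natural discrete model of $S_{[r]}f$ compatible with variational estimates.

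Next I would develop the weighted tree combinatorics. The key quantities are a \emph{weighted size}, a \emph{weighted energy}, and a weighted \emph{density/mass} attached to sub-collections of tiles, each measured relative to $w$ rather than Lebesgue measure. For a single tree, the tile contribution collapses to a martingale-like sequence along the spatial intervals $I_P$ of the tree's top frequency, and its $r$-variation can be controlled by a weighted Lépingle inequality applied to the $w$-dyadic martingale associated to the tree. This is where the hypothesis $w\in A_q$ enters: weighted Lépingle gives a bound of the form $\|\cdot\|_{V^r(L^p(w))}\le C\|\cdot\|_{L^p(w)}$ provided $r>2q$ (the $A_q$-constant of $w$ forces the effective martingale exponent up from $2$), which accounts for the $r>2q$ half of the assumed range.

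With the single-tree estimate in hand, I would carry out a Fefferman-type selection algorithm to split $\P$ into unions of trees $\mathcal{T}_{j,k}$ with controlled weighted size $\le 2^{-j}$ and controlled weighted density $\le 2^{-k}$, with counting bounds for $\#\mathcal{T}_{j,k}$ provided by an energy/John--Nirenberg lemma adapted to $w\in A_q$. Summing the tree bounds over $(j,k)$ with the geometric weights coming from these counting estimates gives a restricted weak-type inequality on $L^q(w)$, which after interpolation with the Lebesgue-type energy bound (obtained by a variational version of the Bessel/orthogonality argument of \cite{oberlin-et-al}) yields the full $L^p(w)$ estimate; the Lorentz bookkeeping here forces the second constraint $r>pq/(p-q)$.

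The main obstacle I expect is the tree estimate: transplanting Lépingle's variational martingale inequality into the $A_q$-weighted setting while simultaneously preserving the wave-packet orthogonality that one uses for a single tree in the unweighted argument. In particular, one must handle the tail of the wave packets $\phi_P$ (which is only polynomial, not compactly supported) against $w$, and ensure that the $r$-variation of the tree's contribution is genuinely controlled by the size parameter rather than by the weaker $L^2$ energy. Once this weighted Lépingle-plus-tree estimate is established, the remaining selection and interpolation steps should be a routine adaptation of the unweighted scheme.
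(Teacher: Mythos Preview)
Your outline is essentially the paper's approach: transference to $\R$, wave-packet discretization, linearization of the $V^r$ norm (the paper does this by $\ell^r$ duality, introducing coefficients $d_j(x)$ with $\sum_j|d_j|^{r'}=1$ rather than by selecting the $N_j$ alone), weighted size and density, a tree estimate driven by a weighted L\'epingle inequality, size/density selection lemmas, and a restricted weak-type argument (Proposition~\ref{p.restrictedweaktype}) to finish.

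One point is misattributed, and it matters if you actually try to carry the plan out. The weighted L\'epingle inequality for Littlewood--Paley families (Lemma~\ref{l.weightLepingle}) holds for \emph{any} $r>2$ once $w\in A_p$; the $A_q$ exponent does \emph{not} push the variational threshold up from $2$ to $2q$. The constraint $r>2q$ enters elsewhere: the weighted size is controlled by $L^q$ averages (Lemma~\ref{l.sizebound}), and the size-selection lemma (Lemma~\ref{l.size}) produces tree collections with size decay $\sim 2^{-n/(2q_0)}$ for any $q_0>q$. When size and density decays are combined in the final geometric sum (Lemma~\ref{l.eta}), summability requires $\tfrac{1}{2q_0}+\tfrac{1}{r'}>1$, i.e.\ $r>2q_0$, hence $r>2q$. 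Similarly, the second constraint $r>pq/(p-q)$ (equivalently $1/p<1/q-1/r$) is not a Lorentz interpolation artifact but comes out of the same summation: one needs $(1-\eta)/q+\eta/(2q_0)>1/p$ with $\eta$ near $2q_0/r$. If you build the argument expecting the L\'epingle step to be the bottleneck, you will find it goes through for all $r>2$ and may then miss where the genuine restriction bites.
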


We derive Theorem~\ref{t.maincor} from Theorem~\ref{t.main}. Let $1<p<\infty$ and $w\in A_p$. Since the $A_p$ condition is an open condition, we have $w\in A_q$ for some $1<q<p$ (see e.g. \cite{MR0293384}). Then \eqref{e.maincor} follows from applying Theorem~\ref{t.main}.  

We would like to point out that,  in
the conclusion of Theorem~\ref{t.maincor}, the  variation exponent must depend upon $w \in A_p$.  Indeed, suppose towards a contradiction that there is some $p\in (1,\infty)$ such that \eqref{e.maincor} holds for every $w\in A_p$ and for fixed $r \in (0, \infty)$. Using the fact that variation-norm decreases as $r$ increases, we may assume that $r>1$.  Then,  $S_{[r]}$ is sublinear, and an application of the Rubio de Francia extrapolation theorem shows that the same 
inequality (with the same $r$) would have to hold for $w$ being the Lebesgue measure and \emph{all} $p \in (1,\infty)$, contradicting an 
example in \cite{oberlin-et-al}*{Section 2}. We also remark that in the Lebesgue setting when $w\equiv 1 \in A_1$ the range of $r$ in Theorem~\ref{t.main} is sharp.

Our proof of Theorem~\ref{t.main} extends our previous work in \cite{weightWalsh} on a Walsh--Fourier model of $S_{[r]}$ and at the same time is a weighted extension of \cite{oberlin-et-al}. The proof uses two new ingredients:  weighted analysis on the Fourier phase plane, and a weighted extension of a classical variational inequality of L\'epingle (Lemma~\ref{l.weightLepingle}). The weighted adaptation of analysis on the Fourier phase plane in our proof follows closely the adaptation in \cites{weightWalsh}, modulo (substantial) technicalities arising from the lack of perfect localization of Fourier wave packets. In particular, our approach is different from the elegant argument in \cite{hunt-young} where  a good-$\lambda$ argument was used to deduce weighted bounds for $S$ from the Carleson--Hunt theorem. It is not hard to see that a naive adaptation of the good-$\lambda $ approach in \cite{hunt-young} does not apply to the variation-norm Carleson operator. Our approach is inspired by an argument of Rubio de Francia \cite{rubio-de-francia}, though it is easier to see this inspiration in the dyadic setting of \cite{weightWalsh}.
 We anticipate that the weighted phase plane analysis in our proof will be useful in a variety of open problems involving weighted bounds for multilinear operators with oscillatory nature, where a naive adaptation of the approach in \cite{hunt-young} seems not applicable\footnote{We would like to point out that Xiaochun Li \cite{XiaochunLi} has some unpublished results about weighted estimates for the bilinear Hilbert transform.}.  It is interesting to compare our paper with that of Bennett--Harrison \cite{MR2880218}.

\subsection{Notational convention}\label{s.notation}

(i) Henceforth, we work on the real line $ \mathbb R $, and 
set $ \widehat f (\xi )= \int f (x) \operatorname e ^{-i 2 \pi x \xi  } \; dx  $.

\noindent (ii)
For any $1\le t<\infty$ we will denote by $\M_t f$ the $L^t$ Hardy-Littlewood maximal function, and by $\M_{t,w}f$ the weighted $L^t$ maximal function
$$\M_{t,w}f (x) = \sup_{I: x\in I} \Big(\frac{1}{w(I)} \int_I |f(x)|^t w(x) dx \Big)^{\frac 1 t} \,.$$

\noindent (iii) The dyadic intervals $ \mathbf D$ will play a distinguished role.  
We denote by $f^{\sharp}$ the dyadic sharp maximal function of $f$, namely 
\begin{equation*}
f ^{\sharp} (x) := \sup _{I \in \mathbf D} 1_{I} (x) \lvert  I\rvert ^{-1} \int _{I} \Bigl\lvert  f -  \lvert  I\rvert ^{-1} \int _{I} f (y) \; dy  \Bigr\rvert \; dx \,.
\end{equation*}
All BMO norms, unless otherwise specified, are dyadic BMO norms, namely $ \lVert f\rVert_{BMO} = \lVert f ^{\sharp}\rVert_{\infty }$. 
An important inequality for this paper is the familiar estimate 
\begin{equation}\label{e.sharp<}
\lVert  \phi \rVert_{L ^{p} (w)} \simeq \lVert \phi ^{\sharp}\rVert_{ L ^{p} (w)}\ \ , \qquad w\in A_p \,. 
\end{equation}

\noindent (iv) For any interval $I$ and $c>0$ we denote by $cI$ the interval with length $c|I|$ and with the same center as $I$.
This should not be confused with $c(I)$ which will denote the center of $I$.  
A standard property of an  $w \in A_p $ weight is that it is  doubling. 
There exists $\gamma=\gamma(w)$ such that for any interval $I$ and any $k\ge 0$ it holds that
\begin{equation}\label{e.doubling}
w(2^k I) \le 2^{\gamma k} w(I) \,.
\end{equation}

\noindent (v) For any set $G$ we denote $w(G)=\int_G w(x)dx$.

\subsection{Transference to a singular integral form}\label{s.transference}
Using a weighted variant of a transference argument in \cite{oberlin-et-al}*{Appendix A},  it is not hard to see that Theorem~\ref{t.main} follows from Theorem~\ref{t.mainsingular} stated below. In Theorem~\ref{t.mainsingular}, we define
\begin{equation}\label{e.varsingint}
C_{[r]} f(x) := \sup_{K, N_0<\dots<N_K} \Big(\sum_{j=1}^K |\int_{N_{j-1}}^{N_j} \widehat f(\xi)e^{i2\pi x\xi} d\xi|^r\Big)^{1/r} \,. 
\end{equation}
\begin{theorem}\label{t.mainsingular} Let $1 < p < \infty $ and $w\in A_q$ for some $q\in [1,p)$. Then for $r > \max (2q,\frac{pq}{p-q})$ it holds that 
\begin{equation}\label{e.mainsingular}\|C_{[r]} f\|_{L^p(\R, w)} \le C \|f\|_{L^p(\R, w)}  
\end{equation}
for some constant $C$ depending only on $w$, $p$, $q$, $r$. 
\end{theorem}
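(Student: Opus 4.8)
The plan is to prove Theorem~\ref{t.mainsingular} by running a weighted variant of the time-frequency decomposition of \cite{oberlin-et-al}, organized around the sharp-maximal-function estimate \eqref{e.sharp<}. Since $w\in A_q$ with $q<p$, by \eqref{e.sharp<} it suffices to bound $\lVert (C_{[r]}f)^{\sharp}\rVert_{L^p(w)}$, and pointwise one has $(C_{[r]}f)^{\sharp}(x)\lesssim \M_{q}(C_{[r]}f)(x)$ only trivially, so instead the real point is to establish a \emph{good-$\lambda$ / local oscillation} bound of the form: for every dyadic interval $I$, the average oscillation of $C_{[r]}f$ over $I$ is controlled by $\bigl(\M_{q}(|f|^{q})(x_0)\bigr)^{1/q}$ plus a geometrically decaying sum of tail contributions, uniformly in $x_0\in I$. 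The weighted $L^p$ bound for $\M_q$ on $L^p(w)$ (valid precisely because $w\in A_q\subset A_{p/q}\cdot(\text{trivial})$, i.e. $w\in A_{p/q}$ after rescaling) then closes the argument, and the doubling property \eqref{e.doubling} is what converts the geometric tail sum into a convergent bound.

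First I would set up the phase-plane machinery: fix a smooth partition adapted to Whitney boxes (tiles) in the $(x,\xi)$ plane, write $\widehat f(\xi)e^{2\pi i x\xi}\mathbf 1_{[N_{j-1},N_j]}(\xi)$ as a superposition of wave-packet contributions associated to tiles, and split the tile collection into trees according to the standard ordering. On each tree one estimates the $V^r$-variation of the partial sums; this is where the exponent condition $r>\max(2q,\frac{pq}{p-q})$ must enter — the $2q$ comes from the weighted L\'epingle inequality (Lemma~\ref{l.weightLepingle}) applied to the ``low-frequency'' lacunary pieces, while $\frac{pq}{p-q}$ governs the interaction between the tree-counting (energy/density, i.e.\ the BMO-type size estimates) and the weighted maximal function exponent. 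I would run the usual size and density decomposition of the trees, but measure size against the weight: define $\size$ and $\density$ using $w$-averages (replacing $|I|$ by $w(I)$ where the Carleson measure structure permits), so that the selection algorithm produces a count of selected trees obeying a weighted Bessel-type bound.

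The main obstacle — as the authors themselves flag — is the lack of perfect localization of Fourier wave packets: a wave packet associated to a tile $I\times\omega$ is not supported on $I$ but only concentrated there with polynomial (Schwartz) tails, so every ``restrict to $I$'' step in the Walsh argument of \cite{weightWalsh} must be replaced by a tail estimate, and summing these tails against an $A_q$ weight requires the doubling bound \eqref{e.doubling} together with care that the accumulated $2^{\gamma k}$ growth from doubling is beaten by the $2^{-Nk}$ decay of the Schwartz tails (so one needs $N$ large depending on $\gamma(w)$, hence on $[w]_{A_q}$). Concretely, the hard lemma is the single-tree estimate: for a tree $T$ with top $I_T$, one must show $\bigl\lVert (C_{[r]}^T f)\mathbf 1_{I_T}\bigr\rVert$ oscillation-average over $I_T$ is $\lesssim \size(T)\cdot w(I_T)$-normalized, with the error terms from non-localization absorbed.

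Once the single-tree and tree-counting estimates are in hand, I would assemble them: decompose $f$ into pieces at each ``energy level'', apply the tree estimate on each selected tree, sum over trees using the weighted counting bound, and sum over energy levels geometrically — the summability again forcing $r$ above the stated threshold. Finally, dominating the whole expression pointwise by $\bigl(\M_q(|f|^q)\bigr)^{1/q}$ (up to the BMO/sharp-function reduction) and invoking the weighted boundedness of $\M_q$ on $L^p(w)$ gives \eqref{e.mainsingular}. I expect the transference step (deducing Theorem~\ref{t.main} from Theorem~\ref{t.mainsingular}) to be routine given \cite{oberlin-et-al}*{Appendix A}, with the only new point being that the periodization/dilation used there interacts harmlessly with the $A_q$ constant.
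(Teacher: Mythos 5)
Your top-level plan is organized around a sharp-maximal-function reduction for $C_{[r]}f$ itself, and that cannot work. Suppose a pointwise bound of the shape $(C_{[r]}f)^{\sharp}(x)\lesssim_{w,q} (\M(|f|^q))^{1/q}(x)$ (up to geometrically decaying tails) held for $w\in A_q$ and some fixed $r>2q$. Since both sides are Lebesgue-measure objects, the estimate is really about $(q,r)$; specializing to $w\equiv 1\in A_1$ and taking $q$ near $1$ would force $\lVert C_{[r]}f\rVert_{L^p}\lesssim\lVert f\rVert_{L^p}$ for all $p>1$ as soon as $r>2$, contradicting the sharp Lebesgue threshold $r>\max(2,p')$ of Oberlin--Seeger--Tao--Thiele--Wright. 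Structurally, a pointwise sharp-function bound cannot carry a $p$-dependent constraint on $r$, yet the genuine condition $r>pq/(p-q)$ (equivalently $1/p<1/q-1/r$) \emph{is} $p$-dependent and sharp: as $p\downarrow q$ it forces $r\to\infty$. There is a second, independent gap: passing from $(\M(|f|^q))^{1/q}$ to $\lVert f\rVert_{L^p(w)}$ requires $w\in A_{p/q}$, which is strictly \emph{smaller} than $A_q$ whenever $p<q^2$; the inclusion $A_q\subset A_{p/q}$ you invoke fails there, and that is exactly the hard regime of the theorem. This obstruction is the one the authors flag in the introduction when they observe that the naive good-$\lambda$/Hunt--Young scheme does not extend to the variational operator.

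The paper does exploit the sharp-function estimate \eqref{e.sharp<}, but only for auxiliary objects, never for $C_{[r]}f$: it is applied to the tree square function in Lemma~\ref{l.sizebound} and Corollary~\ref{c.BMO}, to a counting function inside the good-$\lambda$ argument for tree selection (Lemma~\ref{l.size}, via Claim~\ref{cl.M2bound}), and to the vector-valued variation operator in the weighted L\'epingle inequality Lemma~\ref{l.weightLepingle}. The actual envelope is different: after discretization and linearization one proves a restricted weak-type estimate for the bilinear form $B_\P(f,g)$ (Proposition~\ref{p.restrictedweaktype}), choosing major subsets of $F$ and $G$, decomposing $\P$ by weighted size (Lemma~\ref{l.size}) and weighted density (Lemma~\ref{l.density}), applying the weighted tree estimates (Lemmas~\ref{l.tree-est} and \ref{l.improved-tree-est}, which in turn use Lemma~\ref{l.weightLepingle}), and summing a two-sided geometric series. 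The exponent constraints $r>2q$ and $1/p<1/q-1/r$ surface precisely in that final summation (Lemmas~\ref{l.eta} and \ref{l.delta}), where one picks $q_0>q$ with $r>2q_0$ and interpolates between size and density; they do not arise from a pointwise oscillation bound. Your identification of the other ingredients — weighted size and density, the weighted L\'epingle inequality, the Schwartz-tail-versus-doubling bookkeeping with $N$ large relative to $\gamma(w)$, and the transference back to the circle — is accurate and matches the paper, so the fix is to replace the sharp-function/good-$\lambda$ envelope with the restricted-weak-type framework.
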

For the reader's convenience, we include details of the transference argument.  

For any $K\ge 1$ and $m\ge 1$, let $I_{m,K}$ be the set of all non-decreasing sequences of length $K+1$ in $\{0,\dots, m\}$. For each such sequence $\vec{N}=(N_0\le \dots \le N_K)$ we construct the variation sum 
\begin{equation}\label{e.variationS}
S_{\vec{N}}f = (\sum_{j=1}^K |S_{N_j}f - S_{N_{j-1}}f|^r)^{1/r} \,.
\end{equation}
Since the set $I_{m,K}$ is bigger when  $m$ or $K$ is larger, by two applications of the monotone convergence theorem it suffices to show that
$$\|\sup_{\vec N \in I_{m,K}} S_{\vec N} f\|_{L^p ([0,1],w)} \le C\|f\|_{L^p([0,1],w)}\,,$$
where the implicit constant is  uniform over $m$ and $K$. Let $\sigma = w^{1-p'}$. Then the above inequality has the following equivalent dual form: for $f$ defined on $[0,1]$ and for $g$ defined on $[0,1]\times I_m \times \{1,\dots,K\}$ (we will write $g_{\vec N,j}(x)$ to denote $g(x,\vec N, j)$),
$$\int_0^1 f(x) \sum_{\vec N \in I_{m,K}} \sum_{j=1}^K  \Big[(S_{N_j} - S_{N_{j-1}}) g_{\vec N, j}\Big](x)   dx
$$
\begin{equation}\label{e.maindual}
\le C \|f\|_{L^p([0,1],w)}  \Big \|\sum_{\vec N \in I_{m,K}} (\sum_{j=1}^K |g_{\vec N, j}|^{r'})^{1/r'}\Big \|_{L^{p'}([0,1],\sigma)} \,.
\end{equation}
 To prove \eqref{e.maindual}, we may assume without loss of generality that $f$ and $g_{\vec N, j}$  are trigonometric polynomials for any $\vec N \in I_m$ and $1\le j \le K$. 

For any $N\ge 0$ let $C_N$ be the Fourier multiplier operator on $L^2(\R)$ whose symbol is the characteristics function of $\{-(N-1/3) \le \xi \le N-1/3\}$ (by definition $C_N \equiv 0$ if $N<1/3$). Let $\delta(x) = e^{-\pi x^2}$ and $\delta_M(x) = \delta(x/M)$.  

By standard transference theory (see e.g. \cite{stein-weiss}*{page 261}), for any integer $N$ and any $1$-periodic trigonometric polynomials $P$, $Q$ we have
$$\int_0^1 P(x)  S_N Q(x)   dx = \lim_{M\to \infty} \frac 1 M \int_{\R} P(x) \delta_{M/\alpha} C_N(\delta_{M/\beta} Q) (x) dx \ \  ,$$
for any $\alpha,\beta \in (0,1)$ such that $\alpha^2 + \beta^2 = 1$. We  take $\alpha = \beta = 1/\sqrt 2$. It follows that the left hand side  of \eqref{e.maindual} is the same as
$$=\lim_{M\to \infty} \frac 1 M \int_{\R} f(x) \delta_{M/{\alpha}}(x) \sum_{\vec N\in I_m} \sum_{j=1}^K \Big[(C_{N_j} - C_{N_{j-1}})(\delta_{M/\beta} g_{\vec N, j})\Big](x) dx\,.$$
It follows from Theorem~\ref{t.mainsingular} that the analogue of \eqref{e.maindual} for $C_N$'s holds, thus the above limit is bounded above by
\begin{equation}\label{e.limR}
\le C \limsup_{M\to\infty} \frac 1 M \| f\delta_{M/\alpha}\|_{L^{p}(\R, w)}  \Big\|\delta_{M/\beta}\sum_{\vec N \in I_{m,K}} (\sum_{j=1}^K |g_{\vec N, j}|^{r'})^{1/r'} \Big \|_{L^{p'}(\R,\sigma)} \,.
\end{equation}
Since $w\in A_q\subset A_p$, we have $\sigma = w^{1-p'} \in A_{p'}$ and in particular both $w$ and $\sigma$ are doubling weights. On the other hand, it follows from exponential decay of $\delta$ that for any doubling measure $\mu$ and any $1<q<\infty$ and any $1$-periodic function $h$
$$\sup_{M\ge 1} \frac 1{M^{1/q}}\|\delta_M h \|_{L^q(\R,\mu)} \le C \|h\|_{L^q([0,1],\mu)} \,.$$
Using this observation, \eqref{e.maindual} follows immediately from \eqref{e.limR}.

We take up the proof of  Theorem~\ref{t.mainsingular} below. 

\section{Discretization}\label{s.discretization}
In this section we reduce the task of proving \eqref{e.mainsingular} to proving similar bounds on model operators.  Consider absolute constants $C_2 \in [1,\infty)$ and $C_3\in (0,C_2)$ and $C_{2,1}, C_{2,2}, C_1$ in $[C_2,\infty)$. Constants with these properties are called admissible.

\subsection{Tiles and bitiles}
In this paper, a tile is a dyadic rectangle of area $1$, which we will write $p=I_p\times \omega_p$ and refer to $I_p$ as the spatial interval and $\omega_p$ as the frequency interval of $p$. By a bitile $P$ we mean a rectangle $I_P\times \omega_P$ that contains (as subsets) two tiles $P_1$ and $P_2$ such that they share the same (dyadic) spatial interval $I_P$ and
$$\text{supp} C_2 \omega_{P_1} \le \inf C_2 \omega_{P_2} \,, \ \  |\omega_P| \le C_1 (|\omega_{P_1}|+|\omega_{P_2}|) \,,$$
$$\omega_P = \text{convex hull}(C_{2,1}\omega_{P_1}\cup C_{2,2}\omega_{P_2}) \,.$$
The classical setting (see e.g. \cite{lacey-thiele-carleson}) when a bitile is a dyadic rectangle of area $2$ is the special case of our general setting when $C_2=C_{2,1}=C_{2,2}=C_1=1$.

We say that two bitiles $P$ and $P'$ are disjoint if they are disjoint in the phase plane. Denote by $\widetilde \omega_P$ the convex hull of $C_2 \omega_{P_1} \cup C_2 \omega_{P_2}$, clearly $\widetilde \omega_P \subset \omega_P$. In this paper, whenever we talk about a bitile collection it shall be assumed that the implicit constants above are the same for any two bitiles.

\subsection{Fourier wave packets}
For every tile $p=I_p\times \omega_p$, a function $\phi_p$ is called a Fourier packet adapted to $p$ if  $\supp (\widehat \phi_p)\subset C_3 \omega_p$, furthermore for any $N>0$ and $n\ge 0$ it holds (for some $C_{N,n}$ depending only on $N$ and $n$) that
\begin{equation}\label{e.fourierpacket}
|\frac {d^n}{dx^n}\phi_p(x)| \le C_{N,n} \frac 1{|I_p|^{1/2+n}} (1+\frac{|x-c(I_p)|}{|I_p|})^{-(N+n)}
\end{equation}
here recall that $c(I_p)$ denotes the center of $I_p$. In a family of Fourier packets, we will assume that the involved implicit constants are uniform.

\subsection{Discretization and the model operators}

For any $r\in [1,\infty)$ and any finite collection $\P$ of bitiles, let
$$C_{r,\P} f := \sup_{K, N_0<\dots<N_K} \Big(\sum_{j=1}^K |\sum_{P \in \P} \<f,\phi_{P_1}\>  \phi_{P_1} 1_{\{N_{j-1}\not\in \omega_P, N_j\in \omega_{P_2}\}}|^r\Big)^{1/r} \,.$$
A symmetric variant of $C_{r,\P}$ can be obtained by changing the limiting condition involving $N_j$, $N_{j-1}$ in the above definition to $\{N_{j-1}\in \omega_{P_1}, N_j \not\in \omega_{P}\}$.

Without loss of generality, we assume in the rest of the paper  that $2q<r<\infty$ and $q \in (1,\infty)$. Via a discretization argument in \cite{oberlin-et-al}, which we summarize below, Theorem~\ref{t.mainsingular} follows from  the Theorem below and its symmetric variant (whose proof is completely analogous).

\begin{theorem}\label{t.discretizedmain} There is a constant  $C<\infty$ independent of $f$ and $\P$ such that
\begin{equation}\label{e.discretizedmain}
\|C_{r,\P}f\|_{L^p(w)} \le C \|f\|_{L^p(w)} \,
\end{equation}
 for any finite collection $\P$ of bitiles and any $p \in (q,\infty)$  such that $1/p<1/q-1/r$.
\end{theorem}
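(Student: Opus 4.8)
The plan is to prove Theorem~\ref{t.discretizedmain} by the standard time–frequency decomposition of the collection $\P$ into trees, organized according to size and density (or mass), but with every estimate measured against the weight $w$. The engine will be two bounds: a \emph{tree estimate} controlling $\|C_{r,\P_T}f\|_{L^p(w)}$ for a single tree $T$ in terms of a weighted size and the weighted measure $w(I_T)$ of the top, and a \emph{counting estimate} bounding how many trees of a given weighted size can be packed in. Summing a geometric series over the dyadic size levels then yields \eqref{e.discretizedmain}. Because of the $r$-variation, the tree estimate is where L\'epingle's inequality enters; this is precisely why we need the weighted L\'epingle inequality (Lemma~\ref{l.weightLepingle}) announced in the introduction, applied after the oscillatory sum over a tree has been reorganized into a martingale-like object.

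First I would set up the relevant numerology. Fix $q \in (1,\infty)$, $r > 2q$, and $p \in (q,\infty)$ with $1/p < 1/q - 1/r$; since $w \in A_q$, $w$ is doubling with some exponent $\gamma$ and \eqref{e.sharp<} holds. For a bitile $P$ define a \emph{weighted size} of a collection $\P'$ as a supremum over trees $T \subset \P'$ of $w(I_T)^{-1/2}$ times an $L^2(w)$-type tree norm of the wave-packet coefficients $\langle f,\phi_{P_1}\rangle$, analogously to the unweighted phase-plane size but with $|I_T|$ replaced by $w(I_T)$; likewise replace the usual density by a weighted density. The key preliminary facts to establish are (a) a \textbf{size lemma}: the size of any subcollection is $\lesssim \|f\|_{L^p(w)}$ via a weighted $L^2$ estimate on a single tree, using the decay \eqref{e.fourierpacket} of the wave packets against the weighted maximal function $\M_{t,w}$, and a weighted John–Nirenberg / BMO control of the $\phi_{P_1}$; and (b) a \textbf{density/counting lemma}: a collection with size between $2^{-n-1}$ and $2^{-n}$ can be split into $O(2^{2n} \cdot (\text{mass factor}))$ trees with $\sum w(I_T) \lesssim 2^{2n}\,\|f\|_{L^p(w)}^p/(\text{something})$, by the usual selection of maximal trees combined with an orthogonality (Bessel-type) argument now run in $L^2(w)$.

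Next I would prove the \textbf{single-tree estimate}: for a tree $T$ with top frequency data,
\[
\|C_{r,T} f\|_{L^p(w)} \;\lesssim\; \size(T)^{1-\theta}\,\bigl(\text{density factor}\bigr)^{\theta}\; w(I_T)^{1/p}
\]
for a suitable $\theta = \theta(p,q,r) > 0$; the exponent constraint $1/p < 1/q - 1/r$ is exactly what makes $\theta$ positive. The mechanism: split $T$ into its "$1$-tree" (lacunary) part and "$2$-tree" (overlapping) part. For the $2$-tree part, the frequency intervals $\omega_{P_2}$ are nested around the top, so the constraint $N_{j-1} \notin \omega_P,\ N_j \in \omega_{P_2}$ telescopes the inner sum and the $r$-variation of the partial sums over $j$ is controlled, via the \emph{weighted L\'epingle inequality} in Lemma~\ref{l.weightLepingle} applied to the dyadic martingale generated by the $\omega_{P_2}$, by an $L^2(w)$ square function; this is where $r > 2q$ is used, since weighted L\'epingle needs $r$ above twice the $A_q$-index. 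For the $1$-tree part one uses that the $\phi_{P_1}$ behave like a (weighted) Carleson/BMO object, so a weighted good-$\lambda$ or the estimate \eqref{e.sharp<} converts $L^p(w)$ control into an $L^2(w)$ estimate plus a maximal function. Interpolating the resulting $L^2(w)$-bound (scaling like $\size \cdot w(I_T)^{1/2}$) against the trivial $L^\infty$-type bound (scaling like $\density$) gives the displayed $L^p(w)$ tree estimate.

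Finally I would assemble the pieces: decompose $\P = \bigcup_{n} \P_n$ where $\P_n$ has size $\sim 2^{-n}$, apply the counting lemma to write $\P_n$ as a union of $\lesssim 2^{2n}$ trees with total weighted top-measure $\lesssim 2^{2n}\|f\|_{L^p(w)}^p$ (after normalizing $\|f\|_{L^p(w)}=1$), and sum the tree estimates:
\[
\|C_{r,\P}f\|_{L^p(w)}^p \;\lesssim\; \sum_n 2^{2n}\cdot 2^{-np(1-\theta)}\,\cdot\,(\text{density})^{p\theta}\cdot 2^{2n}\,,
\]
which converges provided $p(1-\theta) > $ the net positive power of $2^n$ coming from the counting and density bookkeeping — an inequality that, after tracking constants, reduces exactly to $r > \max(2q, \frac{pq}{p-q})$, equivalently $1/p < 1/q - 1/r$ together with $r > 2q$. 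The \textbf{main obstacle} I anticipate is the weighted single-tree estimate for the $2$-tree (overlapping) part: unlike the Walsh model of \cite{weightWalsh}, the Fourier wave packets are not perfectly localized, so the telescoping of the variation sum and the reduction to a dyadic martingale are only approximate, and one must absorb Schwartz tails using \eqref{e.fourierpacket} and the doubling \eqref{e.doubling} without destroying the $L^2(w)$ orthogonality — this is the "substantial technicality" flagged in the introduction, and it is also where the precise form of the weighted L\'epingle inequality has to be robust enough to tolerate error terms.
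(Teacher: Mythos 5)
You have the right coarse architecture---tree decomposition by weighted size and density, a single-tree estimate, the weighted L\'epingle inequality for the $2$-overlapping case, and the flagged Schwartz-tail issue---and in that sense the proposal mirrors the paper. But two omissions prevent it from closing as written. First, before ``density'' can even be defined the operator $C_{r,\P}$ must be linearized by $\ell^r$-duality: introduce measurable selectors $K(x)$, $N_0(x)<\cdots<N_K(x)$ and coefficients $d_j(x)$ with $\sum_j|d_j(x)|^{r'}=1$, and pass to the bilinear form $B_{\P}(f,g)=\sum_{P}\langle f,\phi_{P_1}\rangle\langle\phi_{P_1}\,d_P,gw\rangle$. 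The density of a tree $T$ is then a normalized $w$-weighted integral of $|g|^{r'}\sum_{j:N_j\in\omega_T}|d_j|^{r'}$ against $\widetilde\chi_{I_T}^D$; without the linearizing data, ``weighted density'' has no precise content, and the variational structure of the operator never enters the bookkeeping.

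Second, and more seriously, your summation leaves density as a floating undecayed factor, and that cannot work: for generic $g$ the density is $\sim 1$, and one only gets density decay after restricting $g$ (or $f$) away from an exceptional set $\{\M_{1,w}1_F>Cw(F)\}$ (or $\{\M_{1,w}1_G>Cw(G)\}$). That is exactly the restricted-weak-type reduction to Proposition~\ref{p.restrictedweaktype}. Its proof decomposes $\P=\bigcup_n\P_n$ with \emph{simultaneous} control $\size(\P_n)\lesssim 2^{-n/(2q_0)}$ and $\density(\P_n)\lesssim 2^{-n/r'}$ (Lemmas~\ref{l.size} and~\ref{l.density}); together with the tree estimate $\|1_{I_T}g\,C_Tf\|_{L^s(w)}\lesssim w(I_T)^{1/s}\size(T)\density(T)$ from Lemma~\ref{l.tree-est}---note this is a product, not an interpolation of an $L^2$-size bound against an $L^\infty$-density bound---this produces a genuine two-sided geometric series in $n$. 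Your displayed $\sum_n 2^{2n}\cdot 2^{-np(1-\theta)}\cdot(\text{density})^{p\theta}\cdot 2^{2n}$ does not encode any of this: density is not indexed by $n$, the double factor of $2^{2n}$ is unexplained, and the stated convergence criterion bears no visible relation to $1/p<1/q-1/r$. Two smaller corrections: in the $2$-lacunary case the tree estimate comes from a geometric one-scale-per-$x$ observation, not a BMO/good-$\lambda$ argument; and $r>2q$ is not a requirement of the weighted L\'epingle inequality itself (Lemma~\ref{l.weightLepingle} needs only $r\neq 2$), but rather of the exponent game in Lemma~\ref{l.eta}, where the interval $(2q_0/r,1)$ for $\eta$ must be nonempty for some $q_0>q$.
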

 
\begin{proof}[Discretization] We sketch the main ideas of our weighted adaptation of the discretization argument in \cite{oberlin-et-al}*{Section 3}. For each interval $(a,b)$ with non-dyadic endpoints, let $\J$ be the collection of maximal dyadic intervals in $(a,b)$ such that $\dist(J,a), \dist(J,b)\ge |J|$. It is not hard to see that $\J$ partitions $(a,b)$, and  the ratio between two adjacent elements of $J$ are at most $2$. By direct examination, it follows that there are $O(1)$ possible mutually exclusive  
scenarios involving relative locations of $J$ inside $(a,b)$, and these scenarios are characterized by the following information:
\begin{itemize}
\item  whether $J$ is the left or right child or its dyadic parent, 
\item the distance from $a$ to $J$, which could be arbitrarily large,
\item the distance from $b$ to $J$, which could be arbitrarily large.
\end{itemize}
More specifically, we may divide $\J$ into $O(1)$ disjoint subsets of the following type: If $m,n,k$ are bounded positive integers and $\textup{\emph{side}} $ is $\textup{\emph{left}}$ or $\textup{\emph{right}}$ then we denote by
$\J_{k,m,n,\textup{\emph{side}}}$ the set of all dyadic intervals $J$ such that $J$ is the $\textup{\emph{side}}$-child of its dyadic parent, and $a\in J_{\textup{\emph{low}}}(k,m)$ and $b\in J_{\textup{\emph{high}}}(k,n)$.  
\begin{itemize}  
\item If $k=1$ then  $J_{\textup{\emph{low}}} = J - (m+1)|J|$ and  $J_{\textup{\emph{high}}}=  |J| + (n+1)|J|$.
\item If $k=2$ then  $J_{\textup{\emph{low}}} = J-(m+1)|J|$ and  $J_{\textup{\emph{high}}} = [sup J + n|J|, \infty)$.
\item If $k=3$ then $J_{\textup{\emph{low}}} = (-\infty, \inf J - m|J|]$ and $J_{\textup{\emph{high}}} = |J|+ (n+1)|J|$.
\end{itemize}

The following example of such a partition was given in \cite{oberlin-et-al}, we include this example for the convenience of the reader. Below are the values of $(k,m,n,\textup{side})$:
\begin{align*}
&\{(1,2,1,\textup{left}), (1,2,2,\textup{left}), (1,3,1,\textup{left}),  (1,3,2,\textup{left}), (2,1,1,\textup{left}),\\                  &(2,1,1,\textup{right}), (2,2,1,\textup{right}), (3,4,1,\textup{left}), (3,3,1,\textup{right}), (3,4,2,\textup{left})\} \,.
\end{align*}

Since the relative ratio between adjacent intervals in $\J$ are bounded by 2, we may construct nonnegative $L^\infty$ normalized bump functions $\varphi_J$ such that $1_{(a,b)}(\xi) = \sum_{J\in \J} \varphi_J(\xi) $, furthermore $\varphi_J$ is supported inside a $(1+c)$ dilation of $J$  for each $J\in \J$ , here the absolute constant $c>0$ can be taken arbitrarily small. By using a standard Fourier sampling theorem for the Schwartz band-limited function $\mathcal F^{-1}(\widehat f(\xi) \sqrt{\varphi_J})$ (cf. \cite{thiele-CBMS}) we can easily decompose 
$$\widehat f(\xi)\varphi_J(\xi) = \sum_{|I|=1/(2^L|J|)} \<f,\phi_{I\times J}\> \widehat \phi_{I\times J}(\xi)$$
for some positive integer $L=O(1)$ where  $\widehat{\phi_{I\times J}}(\xi) := |I|^{1/2} \sqrt{\varphi_J(\xi)} e^{-2\pi i c(I)\xi}$.
Note that the frequency support of $\phi_{I\times J}$ is inside a $(1+c)$ dilation of $J$ with $c>0$ can be chosen small. Furthermore, it is clear that the collections of functions $(\phi_{I\times J}: |I|=2^{-L}|J|^{-1})$ can be decomposed\footnote{This decomposition ensures that there is only one wave packet associated with each dyadic rectangle of area $1$.} into $O(1)$ families of Fourier wave packets adapted to the tiles in the phase plane.

Let $\P_{\textup{\emph{side}}}$ denote the collection of all dyadic rectangles of area $2^{-L}$ whose frequency interval is the $side$-child of its parent. Then
$$\int_a^b e^{i2\pi x\xi} \widehat f(\xi)d\xi = \sum_{k=1}^3 \sum_{m,n,\textup{\emph{side}}} 
\sum_{p\in \P_{\textup{\emph{side}}}}  \<f,\phi_p\>\phi_p(x) 1_{\{a\in L_p(k,m), b\in U_p(k,n)\}} \,,$$
here the intervals $L_p(k,m)$ and $U_p(k,n)$ are the $J_{\textup{\emph{low}}}$ and $J_{\textup{\emph{high}}}$ of $J=\omega_p$.

Now, under the assumption that $f$ is Schwartz, it is no loss of generality to assume that the sequences $(N_0<\dots < N_K)$ (used in the definition of $C_{[r]}$) does not contain endpoints of dyadic intervals . Performing the above partition on every $(N_{j-1}, N_j)$, it then follows from the triangle inequality that
$$C_{[r]}f \le \sum_{m,n,\textup{\emph{side}}} C_{1,m,n,\textup{\emph{side}}} f(x) + C_{2,m,n,\textup{\emph{side}}} f(x) + C_{3,m,n,\textup{\emph{side}}} f(x)\,,$$
$$C_{k,m,n,\textup{\emph{side}}} f(x) := \sup_{K,(N_j)}(\sum_{j=1}^K |\sum_{p\in \P_{\textup{\emph{side}}}} \<f,\phi_p\>\phi_p(x) 1_{\{N_{j-1}\in  L_p(k,m), N_j\in  U_p(k,m)\} }|^r)^{1/r} \,.$$
It is not hard to see that for each $1\le m,n = O(1)$, we can bound $C_{3,m,n} f(x)$ by a sum of $O_L(1)$ operators of the same nature as $C_{r,\P}$, with appropriate choice of admissible constants $C_1$, $C_2$, $C_{2,1}$, $C_{2,2}$ and $C_3$. Similarly, $C_{2,m,n} f(x)$ can be bounded by a symmetric variant of $C_{r,\P}$. Since any interval $[a,b)$ can be written as $(-\infty, b) \setminus (-\infty, a)$, it is not hard to see that $C_{1,m,n} f(x)$ can be controlled by two operators of the same nature as $C_{3,m,n} f(x)$. Thus, Theorem~\ref{t.mainsingular} follows from Theorem~\ref{t.discretizedmain}. This completes the discretization step.
\end{proof}

Below we set up a linearized variant of $C_{r,\P}$. By duality in $ \ell ^{r}$, to show \eqref{e.discretizedmain}  it suffices consider the following operator (we omit the dependence on $r$ for simplicity):  
$$(C_{\P}f)(x)  = \sum_{j=1}^{K(x)}\sum_{P \in \P} \langle f,\phi_{P_1}\rangle  \phi_{P_1}(x) 1_{\{N_{j-1}(x) \not\in \omega_{P}, \ N_j(x) \in \omega_{P_2}\}}d_j(x)\,,$$
here $K:\R\to \Z_+$, $N_0(x)<\dots < N_K(x)$ and $ \{d_j\}$ are measurable functions, with 
$$ \label{e.d}|d_1(x)|^{r'} +\dots + |d_{K(x)}(x)|^{r'}=  1 \,.$$
For each bitile $P$,  let $d_P(x)$ be $0$ unless there exists  a (clearly unique) $j$ such that $N_{j-1}(x) \not\in \omega_{P}$ and $N_j(x) \in  \omega_{P_2}$, in which case we set $d_P(x)=d_j(x)$. 
For a function $ g$, we note that $\<C_{\P}f, gw\>=B_{\P}(f,g) $, where 
$$B_{\P}(f,g) :=  \sum_{P\in \P} \langle f,\phi_{P_1}\rangle  \langle \phi_{P_1} \, d_P \, , \, gw\rangle  \,.$$ 

We say that $G' \subset G$ is a major subset if $w(G') > w(G)/2$ and we say $G'$ has full measure if $w(G')=w(G)$. Via a standard restricted weak-type interpolation argument \cite{MR1887641}*{Section 2}, Theorem~\ref{t.discretizedmain} follows from the following proposition:
\begin{proposition}\label{p.restrictedweaktype} Let $F$, $G$ be such that $w(F)$, $w(G)<\infty$. Then there are major subsets of $F$ and $G$, denoted respectively by $\widetilde{F}$ and $\widetilde{G}$,  such that:\\
(i) at least one subset has full measure, and\\
(ii) for any $ \lvert  f\rvert  \le 1_{\widetilde F}$ and  $|g| \le 1_{\widetilde G}$ and any finite collection of bitiles $\P$ we have
\begin{equation}\label{e.restrictedweaktype}
B_{\P}(f, g) \le C w(F)^{1/p} w(G)^{1-1/p} \, \, 
\end{equation}
for all $p \in (q,\infty)$  such that $1/p<1/q-1/r$.
\end{proposition}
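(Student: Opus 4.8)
The plan is to follow the Fefferman–Lacey–Thiele style phase-space analysis, adapted to the weighted setting along the lines of the authors' earlier Walsh paper, organizing the bitile collection $\P$ according to how much each bitile "sees" the sets $\widetilde F$ and $\widetilde G$. First I would set up the exceptional set. Because $w\in A_q$ with $q<p$, the weighted maximal operator $\M_{t,w}$ is bounded on $L^p(w)$ for $t$ in a suitable range, and I would define
$$\Omega := \{x : \M_{q,w}(1_F)(x) > C\, w(F)/w(G)\} \cup \{x : \M_{t,w}(1_G)(x) > C\}$$
for appropriate $t>1$ and a large constant $C$, together with a further enlargement by a fixed dilate so that $\Omega$ is "well-inside" controlled. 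Then $\widetilde G := G \setminus \Omega$ (or $\widetilde F$, whichever keeps full measure), and the other set is taken to be all of $F$ (resp.\ $G$); the $A_q$/doubling property and the weak-type bound for $\M_{q,w}$ guarantee $w(\Omega) < w(G)/2$ (after choosing $C$ large), so the major-subset and full-measure requirements in (i) hold. This is the weighted replacement for the unweighted construction in \cite{oberlin-et-al}.

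Next I would introduce the three organizing quantities — $\size$, $\density$, and $\mass$ — for subcollections of $\P$, where $\density$ measures the largest weighted average of $1_{\widetilde F}$ over (dilates of) spatial intervals of bitiles relative to the $\sigma$-weight, $\mass$ the analogous quantity for $1_{\widetilde G}$ against $w$, and $\size$ the $L^2(w)$-normalized "energy" $\bigl(\sup |I_P|^{-1}\sum \lvert\langle f,\phi_{P_1}\rangle\rvert^2\bigr)^{1/2}$-type quantity, all measured with the correct weighted normalizations. Using the definition of $\Omega$, every bitile in $\P$ whose spatial interval is not deep inside $\Omega$ has $\density \lesssim w(F)/w(G)$ and $\mass \lesssim 1$; bitiles buried in $\Omega$ are handled separately (their contribution to $B_\P(f,g)$ vanishes or is negligible because $d_P$ is supported where $N_j(x)\in\omega_{P_2}$, and one uses the tail decay of the wave packets together with the doubling estimate \eqref{e.doubling}). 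I would then prove a \emph{tree estimate}: for a single tree $T$ with top data $(I_T,\xi_T)$,
$$\lvert B_T(f,g)\rvert \lesssim \size(T)\,\density(T)^{\theta_1}\,\mass(T)^{\theta_2}\, w(I_T)$$
for suitable exponents, where the crucial new input beyond \cite{oberlin-et-al} is the \emph{weighted Lépingle inequality} (Lemma~\ref{l.weightLepingle}) used to sum the $r$-variation over the "overlapping" tree intervals, and a weighted square-function/Bessel-type bound for the "lacunary" part, both estimated by $L^p(w)$-maximal functions via \eqref{e.sharp<}. Then a \emph{selection/orthogonality algorithm} decomposes $\P$ into trees with geometrically decaying $\size$, $\density$, $\mass$, with the counting bounds $\sum_{T}\, w(I_T) \lesssim \delta^{-?}$, $\sum_T w(I_T)\lesssim n^{-?}$, etc., coming from a weighted $TT^*$/Bessel argument (this is where the lack of perfect frequency localization of Fourier wave packets forces the technical overhead the authors warn about).

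Finally I would sum: organizing bitiles into groups where $\size \sim 2^{-k}$, $\density\sim 2^{-\ell} w(F)/w(G)$, $\mass \sim 2^{-n}$, applying the tree estimate and the counting bounds, one gets
$$\lvert B_\P(f,g)\rvert \lesssim \sum_{k,\ell,n} 2^{-k}\, (2^{-\ell}\tfrac{w(F)}{w(G)})^{\theta_1} 2^{-n\theta_2}\cdot \min\bigl(2^{2k}, 2^{\ell}\tfrac{w(G)}{w(F)}, 2^{n}\bigr)^{?}\, w(G),$$
and the geometric series converges precisely under the hypotheses $1/p < 1/q - 1/r$ and $r>2q$ — these are exactly the constraints that make the three-parameter sum summable, so the interplay of exponents must be tracked carefully to land on the right range rather than a suboptimal one. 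The output is \eqref{e.restrictedweaktype}. \textbf{The main obstacle} I anticipate is the tree estimate together with the tree-counting (orthogonality) lemma in the weighted setting: without exact support of the $\widehat\phi_P$ one cannot use clean $\ell^2$ orthogonality, so one must replace it by almost-orthogonality with rapidly decaying off-diagonal terms, and then check that summing these tails against the weight $w$ — using only doubling and the $A_q$ structure, not pointwise bounds — still yields the same power of $w(I_T)$; correctly interfacing this with the weighted Lépingle inequality to control the $r$-variation is the technical heart of the argument.
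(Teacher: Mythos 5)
Your overall framework---exceptional sets via weighted maximal functions, a size/density stopping-time decomposition into trees, a tree estimate whose variation part is handled by the weighted L\'epingle inequality, a weighted Bessel/counting bound for the selected tree tops, and a final geometric-series summation---is the right shape and does match the paper. You correctly flag that the tree estimate and the tree-counting lemma in the weighted, imperfectly-localized setting are where the real work lies. But there are two structural errors that would cause a write-up along your lines to go off the rails.

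First, the exceptional set. You define a single $\Omega$ as a union of level sets of $\M_{q,w}(1_F)$ (at threshold $Cw(F)/w(G)$) and $\M_{t,w}(1_G)$, then remove it from whichever of $F,G$ keeps full measure. With the threshold $Cw(F)/w(G)$ the weak-type bound for $\M_{q,w}$ gives $w(\{\M_{q,w}1_F > \lambda\}) \lesssim w(F)/\lambda^q$, which at $\lambda = Cw(F)/w(G)$ is $w(G)^q/(C^qw(F)^{q-1})$; this is not $\lesssim w(G)$ unless $w(F)\gtrsim w(G)$, so your $\Omega$ need not be a small exceptional set. The paper instead normalizes $\max(w(F),w(G))=1$ and splits into two cases. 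In the case $w(F)\le w(G)$ it sets $\Omega = \{\M_{1,w}1_F > Cw(F)\}$ and removes it from $G$ (taking $\widetilde F = F$); crucially, $\Omega$ involves $1_F$ but is subtracted from $G$, because the point is to force $g$ to live where the \emph{size} (which is built from $f$) is small. In the opposite case $w(F)>w(G)$, the roles swap: $\Omega=\{\M_{1,w}1_G>Cw(G)\}$ is removed from $F$ to control the \emph{density}. Each case has its own summation lemma (Lemma~\ref{l.eta} and Lemma~\ref{l.delta}), with different exponent bookkeeping, and Case~1 additionally uses the \emph{improved density} for subcollections whose bitiles become disjoint in the phase plane as they recede from $\Omega^c$.

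Second, the organizing quantities. You introduce three: $\size$, a $\density$ measuring averages of $1_{\widetilde F}$, and a $\mass$ measuring averages of $1_{\widetilde G}$. But $f$ enters the form $B_\P(f,g)$ only through the coefficients $\langle f,\phi_{P_1}\rangle$, and these are already captured by $\size$; there is no independent ``density of $1_{\widetilde F}$'' in play. The paper's $\density$ is a single quantity combining $|g|^{r'}$ with the linearization weights $\sum_{j:N_j\in\omega_T}|d_j|^{r'}$, so the $G$-side information and the $r$-variation structure are packaged together. Two quantities (size and density, as defined in the paper) suffice, and the final double geometric series --- in the stopping parameters $n$ (and additionally $m$ in Case~2) --- converges exactly in the stated range $1/p<1/q-1/r$, $r>2q$, via a choice of exponents $\eta$ close to $2q_0/r$ with $q_0$ close to $q$. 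A three-parameter summation scheme against $\size,\density,\mass$ would not match the structure of $B_\P$ here and leaves the exponent bookkeeping indeterminate, as your own ``$\min(\cdot)^?$'' signals.
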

In the rest of the paper, we will prove Proposition~\ref{p.restrictedweaktype}.

\section{Decomposition of bitile collections}\label{s.decomp}
Without loss of generality we may assume the following separation conditions: 
\begin{itemize}
\item[(S1)] The ratio $\textup{dist}(\omega_{P_1}, \omega_{P_2})/|\omega_{P_1}|$ is constant over $P\in \P$.
\item[(S2)] For any two bitiles $P$ and $P'$,  if $\omega_P \cap \omega_{P'} \ne \emptyset$ and $|I_P| = |I_{P'}|$ then  $\omega_{P}=\omega_{P'}$.
\item[(S3)]\label{i.S3} For any two bitiles $P$ and $P'$, if $|I_P| > |I_{P'}|$ then $|\omega_P| < |\omega_{P'_1}|/K_0$ for some large absolute constant $K_0$ that will be chosen in the proof. (The choice  of $K_0$ is refined a bounded number of times below.)
\end{itemize}
\begin{remark}\label{r.lattice} First, we will require that $K_0 > \frac2{C_2-C_3}$. This means that  for any $1\le i\le 2$, 
if $C_3 \omega_{P_i}\cap C_3\omega_{P'_i} \ne \emptyset$ and $|I_P|>|I_{P'}|$ then $\omega_P \subset C_2\omega_{P'_i}$.
\end{remark}

\subsection{Trees}
In this paper, a finite collection $T$ of bitiles is a \emph{tree} if there exists a dyadic interval $I_T$ and a real number $\xi_T$ such that for any $P\in T$ we have
$$I_P  \subset I_T \qquad \text{and} \qquad \omega_T:=[\xi_T - \frac {1}{2|I_T|}, \xi_T + \frac {1}{2|I_T|})  \subset  \widetilde \omega_P\,.$$ 
$I_T$ will be referred to as the \emph{top interval} of $T$. Similarly, $\xi_T$ and $\omega_T$ will be referred to as the \emph{top frequency} and the \emph{top frequency interval} of $T$.

We say that $T$ is \emph{$2$-overlapping} if $\xi_T \in C_2 \omega_{P_2}$ for every $P\in T$, and we say that $T$ is \emph{$2$-lacunary} if  $\xi_T \not\in C_2\omega_{P_2}$ for every $P\in T$.

It is clear that any tree can be split into two trees, one of each type. Furthermore, the union of two trees with the same $(I_T,\xi_T)$ is a tree and we may use the pair $(I_T,\xi_T)$ for the new tree. If  these two trees are $2$-lacunary then the new tree is also $2$-lacunary.

\begin{remark}\label{r.treelacunary}
By further  requiring that $K_0>\frac {C_3}{2C_1+1}$ in the  separation assumption (S\ref{i.S3}), we obtain the following properties (cf. Remark~\ref{r.lattice}). Let $T$ be a tree and let $P, P'\in T$ be two different bitiles.
\begin{itemize}
\item If $|I_P|=|I_{P'}|$ then $I_P \cap I_{P'} = \emptyset$.
\item If $T$ is $2$-overlapping  and $|I_P|>|I_{P'}|$ then $\omega_P \cap  C_3 \omega_{P'_1} = \emptyset$. 
\item If $T$ is  $2$-lacunary  and $|I_P|>|I_{P'}|$  then $\omega_{P} \cap C_3 \omega_{P'_2} = \emptyset$.
\end{itemize}
\end{remark}

\begin{remark}\label{r.maximalsubtree} If there is a dyadic interval $J$ such that for every $P\in\T$ we have $I_P\subset J$ then we can decompose $T$ into $O(1)$ subtrees, each tree has  $J$ as top interval (the top frequencies of these subtrees are not necessarily the same, but they are $O(1/|J|)$ away from the original $\xi_T$). Essentially, this is because we would have $|\widetilde\omega_{P}|\ge \frac 2{|J|}$ and then one can always partition $T$ into two desired trees depending on the relative position of $\xi_T$ in $\widetilde \omega_P$.
\end{remark}

\subsection{Tile norms}
Below, for any collection $\Q$ of bitiles we denote
$$S_{\Q}f(x)  := \Bigl[\sum_{P\in \Q} \frac{|\langle f,\phi_{ P_1}\rangle|^2}{|I_P|} 1_{I_P}\Bigr] ^{1/2}    \,.$$
\begin{definition}[Size] 
The \emph{size} of a collection $\P$ of bitiles  is  
$$
\size(\P) := \sup _{T \subset \P} 
 w(I_T)^{-\frac 1{2}} 
\|S_T f\|_{L^{2}(w)}  \,.$$
The supremum is over all  $2$-overlapping tree $T \subset \P $.  
\end{definition}
It is clear that for $w\equiv 1$ one recovers the standard definition of size (cf. \cites{lacey-thiele-calderon}).  For any interval $I$, let 
$$\widetilde \chi_I(x) = \Big[1+(\frac{x-c(I)}{|I|})^2\Big]^{-1/2} \,.$$
Note that if $J\subset I$ then $\widetilde \chi_{J} \le C\widetilde \chi_ {I}$, and this estimate will be used 
implicitly in future estimates. 

\begin{definition}[Density] Recall the definition of the  functions  $d_j$ from  \eqref{e.d}. Fix a large constant $D \in (0,\infty)$. The density of a collection $\P$ of bitiles is defined to be
$$\density(\P):= \sup_{T} \Big(\frac{1}{w(I_T)} \int \widetilde \chi_{I_T}^D |g|^{r'}\sum_{j: N_j \in \omega_T} |d_j |^{r'}w  \Big)^{1/r'}\ \ ,$$
here the supremum is over nonempty trees $T\subset \P$.
\end{definition}

Choose $D$ to be very large  depending on $w,p,q,r$ in the proof of Proposition~\ref{p.restrictedweaktype}  in Section~\ref{s.mainargument} (see also the proof of Lemma~\ref{l.density}).  All the implicit constants are allowed to depend on $D$. 

When the elements of $\P$ are disjoint in the phase plane, the following improved notion of density is more useful in future estimates, see also Lemma~\ref{l.improved-tree-est}. 
 
\begin{definition}[Improved Density] The \emph{improved density} of a collection $\P$ of bitiles is defined to be
$$\widetilde{\density}(\P):= \sup_{P\in \P}\Big(\frac{1}{w(I_P)} \int \widetilde \chi_{I_P}^D |g|^{r'}\sum_{j: N_j \in \omega_{P_2}} |d_j |^{r'}w  \Big)^{1/r'}\,.$$
\end{definition}
It is clear that $\widetilde \density(\P)\le C \density(\P)$ for any $\P$.
\subsection{Decomposition by size}

We have the following size bound:
\begin{lemma}\label{l.sizebound} Assume $w\in A_q$. Then for  any $N>0$ there is a constant $C=C(N,q,w) <\infty$ such that for any $\P$
$$\size(\P) \le C \sup_{P\in \P} \Big(\frac{1}{w(I_P)}\int |f|^q \widetilde \chi_{I_P}^N w  \Big)^{1/q}$$
\end{lemma}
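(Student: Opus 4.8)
The plan is to estimate $\size(\P)$ by controlling $\|S_Tf\|_{L^2(w)}$ for a fixed $2$-overlapping tree $T$, and then use the $A_q$ condition to pass from the $L^2(w)$-mass of the square function to a weighted average of $|f|^q$ over $I_T$. First I would recall that for a $2$-overlapping tree, the bitiles stack in a lacunary fashion in frequency around $\xi_T$; more precisely, by Remark~\ref{r.treelacunary}, distinct bitiles $P,P'\in T$ with $|I_P|=|I_{P'}|$ have disjoint spatial intervals, and the frequency intervals $C_3\omega_{P_1}$ are lacunary as $|I_P|$ varies. This lacunary structure means the wave packets $\phi_{P_1}$ behave like pieces of a Littlewood--Paley decomposition centered at $\xi_T$: the operator $f\mapsto S_Tf$ is essentially a square function built from rapidly-decaying wave packets with lacunary frequency supports, and should be comparable (pointwise, up to the Schwartz tails encoded in \eqref{e.fourierpacket}) to a weighted Littlewood--Paley square function of $f$ modulated to frequency $\xi_T$.

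The key steps, in order, would be: (1) Fix a $2$-overlapping tree $T$ with top data $(I_T,\xi_T)$; without loss of generality (modulating $f$ by $e^{-2\pi i\xi_T x}$, which does not affect any of the quantities involved) take $\xi_T=0$. (2) Use the decay estimate \eqref{e.fourierpacket} together with the lacunarity of the $\omega_{P_1}$'s and the spatial disjointness at each scale to get a pointwise bound
$$S_Tf(x)^2 = \sum_{P\in T}\frac{|\<f,\phi_{P_1}\>|^2}{|I_P|}1_{I_P}(x) \le C\sum_{k}\big|(\psi_k * f)(y_k)\big|^2\Big|_{\text{appropriately averaged}},$$
where $\psi_k$ is a smooth frequency projection to an annulus of scale $2^k$; more robustly, since the standard way to absorb the Schwartz tails is through maximal functions, I expect a bound of the form $S_Tf(x) \le C\,\M_q\!\big(\text{LP square function of }f\big)(x)$ is too lossy, so instead I would directly estimate $\int (S_Tf)^2 w$ by expanding and grouping bitiles by scale, using $\int |\phi_{P_1}|^2 \widetilde\chi_{I_P}^{-N}w \lesssim$ (a tail-weighted average of $w$) on $I_P$, and summing the lacunary pieces. (3) After integrating against $w$, each term $|\<f,\phi_{P_1}\>|^2/|I_P|$ is bounded, via Cauchy--Schwarz in the pairing and \eqref{e.fourierpacket}, by $\big(\frac{1}{w(I_P)}\int |f|^q\widetilde\chi_{I_P}^N w\big)^{2/q}\cdot w(I_P)/|I_P|$ divided out suitably — here one uses Hölder with exponent $q$ and its conjugate, paying the $A_q$ constant to replace Lebesgue averages of $\phi_{P_1}$ by $w$-averages. (4) Finally sum over $P\in T$: the spatial intervals at each fixed scale are disjoint and nested inside $I_T$, and the scales are summable because of the lacunary frequency separation, producing $\sum_{P\in T}|\<f,\phi_{P_1}\>|^2\cdot(\text{stuff}) \le C\,w(I_T)\sup_{P\in\P}\big(\frac1{w(I_P)}\int|f|^q\widetilde\chi_{I_P}^N w\big)^{2/q}$, which after dividing by $w(I_T)^{1/2}$ and taking the supremum over trees gives the claim.

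The main obstacle, I expect, will be step (2)/(4): handling the Schwartz tails in \eqref{e.fourierpacket} rigorously while keeping the weighted averages localized. Because the wave packets $\phi_{P_1}$ are not compactly supported, one cannot simply say the contribution of $P$ is controlled by an average over $I_P$; there are tails reaching far from $I_P$, and when one weights by $w$ these tails must be controlled using the doubling property \eqref{e.doubling} and the fact that $N$ in the conclusion can be taken as large as we like (so we can afford to lose many factors of $\widetilde\chi$). The bookkeeping is that one decomposes $\R = \bigcup_\ell 2^{\ell+1}I_P\setminus 2^\ell I_P$, on each annulus bounds $|\phi_{P_1}|$ by $|I_P|^{-1/2}2^{-\ell N}$, and uses $w(2^\ell I_P)\le 2^{\gamma\ell}w(I_P)$ so that choosing $N>\gamma$ makes the $\ell$-sum converge; the same device handles the $\widetilde\chi_{I_P}^N$ factor smoothly. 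A secondary technical point is that for a $2$-overlapping tree one must verify the frequency lacunarity carefully using the admissible-constant conventions and Remark~\ref{r.treelacunary}, so that the cross terms $\<\phi_{P_1},\phi_{P'_1}\>$ for $|I_P|\ne|I_{P'}|$ are negligible after integrating against $w$ — this is where the smallness of $1/K_0$ in (S3) gets used. Modulo these (routine but fiddly) tail estimates, the proof is the weighted analogue of the classical size bound, with the only genuinely new input being the one application of Hölder's inequality and the $A_q$ condition to convert Lebesgue averages into $w$-averages.
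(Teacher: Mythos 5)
There is a genuine gap in steps (3)--(4). The term-by-term strategy you describe would, after integrating against $w$, produce the bound
$$\int (S_Tf)^2\,w \;=\; \sum_{P\in T}\frac{|\langle f,\phi_{P_1}\rangle|^2}{|I_P|}\,w(I_P)
\;\le\; \sup_{P}\Bigl(\frac{1}{w(I_P)}\int |f|^q\widetilde\chi_{I_P}^N w\Bigr)^{2/q}\sum_{P\in T}w(I_P)\,,$$
and for this to finish you would need $\sum_{P\in T}w(I_P)\lesssim w(I_T)$. That is false: a $2$-overlapping tree can have arbitrarily many scales, and at each scale the $I_P$ can tile all of $I_T$, so the sum can be as large as (number of scales)$\times w(I_T)$. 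Lacunarity in frequency does not help here because it buys you \emph{almost-orthogonality of the coefficients} $\langle f,\phi_{P_1}\rangle$, not a geometric decay in $w(I_P)$; and you are not invoking orthogonality — you have broken the sum into individual pieces before any cancellation can act. Even with orthogonality, a naive Bessel-type argument would only give an $L^2$-average of $|f|$ on the right, not the $L^q$-average the lemma demands; passing from an $L^2$-normalized square function to an $L^q$-average with $q<2$ is a reverse-H\"older gain that cannot be obtained pointwise or term by term.

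The missing ingredient is precisely the John--Nirenberg characterization of size, Lemma~\ref{l.BMO}, together with the sharp maximal function. The paper's proof first invokes Lemma~\ref{l.BMO} to replace the $L^2(w)$-normalized size by the equivalent $L^q(w)$-normalized version (this is where the reverse-H\"older gain is encoded), then applies the Fefferman--Stein estimate \eqref{e.sharp<} for $w\in A_q$ to reduce to a pointwise bound $(S_Tf)^\sharp\lesssim \M_1(f\widetilde\chi_{I_T}^N)$, and finally proves that pointwise bound by expanding $\frac1{|J|}\int_J |S_Tf - c_J|$ and invoking the \emph{unweighted} case of the lemma (which itself relies on this same BMO mechanism). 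Your tail bookkeeping in the last paragraph is fine as far as it goes, and Remark~\ref{r.treelacunary} is indeed the right tool for the cross-scale frequency separation, but without the BMO structure of $S_Tf$ the exponents simply do not close. I would suggest reorganizing around Lemma~\ref{l.BMO}: show $(S_Tf)^\sharp\lesssim \M_1(f\widetilde\chi_{I_T}^N)$ (your tail estimates feed naturally into this), then let $A_q$ and the sharp function inequality do the rest.
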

The main ingredient in the proof of Lemma~\ref{l.sizebound} is the following John-Nirenberg characterization of size, which is a standard result in the Lebesgue setting (see e.g. \cite{MTTBiestFourier}). The proof of the Lebesgue case of this characterization extends smoothly to the weighted setting (see \cite{weightWalsh}*{Lemma 3.5}), we omit the details.
\begin{lemma}\label{l.BMO} For any $1<p<\infty$ and any collection $\P$ we have
\begin{eqnarray*}
\sup_{T\subset \P} \frac{1}{w(I_T)^{1/p}} \|S_T f\|_{L^p(w)}  &\sim_p& \sup_{T\subset \P} \frac{1}{w(I_T)} \|S_T f\|_{L^{1,\infty}(w)}
\end{eqnarray*}
the suprema are over all $2$-overlapping trees. 
\end{lemma}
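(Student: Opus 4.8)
The plan is to prove the John--Nirenberg type equivalence by the standard argument, carried out with the weight $w$ in place of Lebesgue measure; the key point is that every ingredient is either insensitive to the choice of underlying measure or uses only doubling of $w$ (which holds since $w\in A_q$, see \eqref{e.doubling}). Fix a $2$-overlapping tree $T\subset\P$. The direction ``$\gtrsim_p$'' (the $L^{1,\infty}$ quantity controls the $L^p$ quantity, after taking the supremum) is easy: if $\|S_Tf\|_{L^p(w)}\le A\,w(I_T)^{1/p}$ for all $2$-overlapping $T$, then by Chebyshev $\|S_Tf\|_{L^{1,\infty}(w)}\le \|S_Tf\|_{L^1(w)}\le w(I_T)^{1-1/p}\|S_Tf\|_{L^p(w)}\le A\,w(I_T)$; actually the cleaner route is to interpolate downward, but one must be careful that the inequality is required for \emph{all} subtrees. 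Since any subtree of a $2$-overlapping tree is again a $2$-overlapping tree with top interval contained in $I_T$, this causes no trouble, and the bound $\|S_Tf\|_{L^1(w)}\lesssim w(I_T)^{1-1/p}\|S_Tf\|_{L^p(w)}$ after taking sup gives one direction.

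For the substantive direction ``$\lesssim_p$'' — deducing an $L^p(w)$ bound from the $L^{1,\infty}(w)$ bound — I would run the usual stopping-time/selection argument on the tree. Assume the $L^{1,\infty}(w)$ quantity is at most $1$, i.e. $w\{x: S_{T'}f(x)>\lambda\}\le \lambda\,w(I_{T'})$ for every subtree $T'$ and every $\lambda>0$. One decomposes $T$ recursively: given the current tree with top interval $I$, select the maximal dyadic intervals $I'\subsetneq I$ with $w\{x\in I': S_{T(I')}f>\lambda\}> \varepsilon\, w(I')$ (where $T(I')$ is the subtree with spatial intervals in $I'$ and $\varepsilon$ a small absolute constant), peel off the corresponding subtrees, and iterate. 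The selected top intervals at each generation are pairwise disjoint and, by the $L^{1,\infty}$ hypothesis applied to each, their total $w$-measure is controlled by $\varepsilon^{-1}$ times the measure of the previous generation's intervals — no, more precisely one gets geometric decay $\sum_{\text{gen }k}w(I')\lesssim (C\varepsilon)^k w(I_T)$ once $\varepsilon$ is chosen small relative to the $L^{1,\infty}$ constant. Off the selected intervals, $S$ of the remaining tree is pointwise $\lesssim\lambda$; summing the level-set estimates $w\{S_Tf>\lambda\}\lesssim w(I_T)\sum_k (C\varepsilon)^k \cdot(\text{something in }\lambda)$ over dyadic $\lambda$ and integrating $\int_0^\infty p\lambda^{p-1}w\{S_Tf>\lambda\}\,d\lambda$ yields $\|S_Tf\|_{L^p(w)}^p\lesssim_p w(I_T)$, which is the claim. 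One must also handle the contribution of bitiles whose spatial interval equals the top interval $I_T$ itself (these are never peeled off), but by Remark~\ref{r.maximalsubtree} and the $2$-overlapping property there are only $O(1)$ of them with a given scale, and the square function over them is estimated directly by the $L^{1,\infty}$ hypothesis on $T$.

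The main obstacle is the selection/iteration geometry: one needs the standard fact that, within a $2$-overlapping tree, the subtree $T(I')$ attached below a spatial interval $I'$ is again a $2$-overlapping tree with top data governed by $I'$ (so the $L^{1,\infty}$ hypothesis is legitimately applicable to it), and one needs the maximal selected intervals at a fixed generation to be disjoint so that the measures add. Both are genuinely the same as in the unweighted proof (see \cite{MTTBiestFourier} and the weighted adaptation in \cite{weightWalsh}*{Lemma 3.5}); the only extra care is replacing ``$|E|\le\lambda|I|$'' by ``$w(E)\le\lambda\,w(I)$'' throughout and invoking doubling of $w$ when passing between an interval and a bounded dilate of it in the Fourier-wave-packet tail estimates. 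Since the paper explicitly states this extension is routine, I would record the statement, cite \cite{weightWalsh}*{Lemma 3.5}, and indicate that the above stopping-time argument goes through verbatim with $|\cdot|$ replaced by $w(\cdot)$.
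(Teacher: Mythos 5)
Your overall plan --- a weighted stopping-time (John--Nirenberg) argument using only doubling of $w$ --- is the same route the paper takes; the paper simply cites \cite{weightWalsh}*{Lemma 3.5} and the unweighted case from \cite{MTTBiestFourier} and omits all details. Your easy direction is correct: with $S_Tf$ supported on $I_T$, Chebyshev and H\"older give $\|S_Tf\|_{L^{1,\infty}(w)}\le \|S_Tf\|_{L^1(w)}\le w(I_T)^{1-1/p}\|S_Tf\|_{L^p(w)}$, proving the $\gtrsim_p$ direction (your parenthetical description has it backwards --- this is the $L^p$ quantity controlling the $L^{1,\infty}$ quantity --- but the displayed chain is the right one).

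The stopping-time selection criterion in your sketch of the substantive direction, however, has a genuine gap. You propose to stop on maximal dyadic $I'\subsetneq I$ with $w\{x\in I': S_{T(I')}f>\lambda\}>\varepsilon\, w(I')$. But under the normalization $\sup_{T'\subset T}w(I_{T'})^{-1}\|S_{T'}f\|_{L^{1,\infty}(w)}\le 1$, the $L^{1,\infty}$ hypothesis applied to $T(I')$ already gives $w\{S_{T(I')}f>\lambda\}\le\lambda^{-1}w(I')$, so once $\lambda>1/\varepsilon$ your criterion selects nothing; and the claim that off the selected intervals one has $S_Tf\lesssim\lambda$ pointwise does not follow, since the $L^{1,\infty}$ normalization is fully compatible with $S_Tf$ growing logarithmically in the number of scales. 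The correct stopping rule uses the level sets of $S_Tf$ itself, which is piecewise constant on dyadic intervals (because $S_T$ is built from $1_{I_P}$, not $\phi_{P_1}$): take $\lambda_0=2$, let $\Omega=\{S_Tf>\lambda_0\}$ so $w(\Omega)\le\tfrac12 w(I_T)$ by the $L^{1,\infty}$ bound, and take the maximal dyadic $J\subset\Omega$. Each such $J$ has $c_J:=\bigl(\sum_{P\in T:\,J\subsetneq I_P}|a_P|^2/|I_P|\bigr)^{1/2}\lesssim\lambda_0$, since the dyadic parent of $J$ meets $\Omega^c$ and the one-scale increment is controlled by $|a_P|/|I_P|^{1/2}\le 1$, which follows from the $L^{1,\infty}$ bound on a single-bitile subtree; hence $S_Tf\le C\lambda_0+S_{T_J}f$ on $J$, and iterating on the $2$-overlapping subtree $T_J$ (given top interval $J$ via Remark~\ref{r.maximalsubtree}) yields $w\{S_Tf>(2k+1)C\lambda_0\}\le 2^{-k}w(I_T)$ and thus the $L^p(w)$ bound. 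This is purely measure-theoretic and carries over from the Lebesgue case with $w(\cdot)$ replacing $|\cdot|$, as you say --- but the stopping must be on level sets of the full square function, not on relative densities of the subtrees' level sets.
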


\begin{proof}[Proof of  Lemma~\ref{l.sizebound} using Lemma~\ref{l.BMO}]  By decomposing $T$ into smaller subtrees (using Remark~\ref{r.maximalsubtree}), we may assume that $I_T=I_P$ for some $P \in T$. Thus, it suffices to show that 
$$\|S_T f\|_{L^q(w)} \le C\|f\widetilde \chi_{I_T}^N\|_{L^q(w)}\ \  . $$
But $ w\in A_q$, hence $\|S_T f\|_{L^q(w)}  \lesssim \| (S_T f) ^{\sharp}\|_{L^q(w)} $.
Therefore it suffices to show that for any $N<\infty$ we have
\begin{equation}\label{e.M1bound}
(S_T f)^{\sharp} \le C \M_1 (f\widetilde \chi_{I_T}^N) \,.
\end{equation}
For any dyadic interval $J$ let 
$$c_J = (\sum_{P\in T: J\subset I_P} \frac {|\<f,\phi_{P_1}\>|^2}{|I_P|})^{1/2} \,.$$
Then
$$\frac 1{|J|}\int_J |S_T f(x) - c_J|dx \ \  \le  \ \ \Big(\frac 1{|J|}\int_J |S_T f(x)^2 - c_J^2| dx \Big)^{1/2}$$
$$=\frac 1{|J|^{1/2}} \| (\sum_{P\in T:  I_P\subsetneq J} |\<f,\phi_{P_1}\>|^2\frac {1_{I_P}}{|I_P|})^{1/2} \|_2\,. $$
Using the known Lebesgue case of Lemma~\ref{l.sizebound} (see e.g. \cite{MTTBiestFourier}*{Lemma 6.8}), we obtain
$$\frac 1{|J|}\int_J |S_T f(x) - c_J|dx \le C \sup_{P\in T: I_P\subsetneq J} \frac 1{|I_P|} \int |f(x)|\widetilde \chi_{I_P}(x)^{N+4} dx$$
$$\le C \inf_{x\in J\cap I_T}\M_1(f\widetilde \chi_{I_T}^N)(x)\,,$$
and \eqref{e.M1bound} follows immediately.
\end{proof}

We remark that the following bound was proved in the   above proof of Lemma~\ref{l.sizebound}:
\begin{corollary}\label{c.BMO} Assume $w\in A_q$. Then for any $2$-overlapping tree $T$ and any $N>0$ it holds that
$$\|S_Tf\|_{BMO} \le C_N \inf_{x\in I_T} \M_1(f \widetilde \chi_{I_T}^N)(x)$$
here we use the dyadic BMO norm.
\end{corollary}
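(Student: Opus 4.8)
The statement in question is Corollary~\ref{c.BMO}, which asserts that for a $2$-overlapping tree $T$ and any $N>0$, one has $\|S_Tf\|_{BMO} \le C_N \inf_{x\in I_T}\M_1(f\widetilde\chi_{I_T}^N)(x)$ in the dyadic BMO norm.

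The plan is simply to extract this bound from what was already established inside the proof of Lemma~\ref{l.sizebound}. First I would recall that, by the definition of the dyadic sharp maximal function, $\|S_Tf\|_{BMO} = \|(S_Tf)^{\sharp}\|_\infty = \sup_{J\in\mathbf D} \frac1{|J|}\int_J |S_Tf - \langle S_Tf\rangle_J|$, where the supremum is over dyadic intervals $J$. For a dyadic $J$ that is not contained in $I_T$, the truncation argument with $c_J$ from the proof of Lemma~\ref{l.sizebound} still applies: one compares $S_Tf$ on $J$ with the constant $c_J = (\sum_{P\in T: J\subset I_P} |\langle f,\phi_{P_1}\rangle|^2/|I_P|)^{1/2}$, uses the pointwise identity that $S_Tf(x)^2 - c_J^2 = \sum_{P\in T: I_P\subsetneq J}|\langle f,\phi_{P_1}\rangle|^2 1_{I_P}(x)/|I_P|$ for $x\in J$, applies Cauchy--Schwarz to pass from the $L^1$ average of the absolute value to the square root of the $L^1$ average of the difference of squares, and then invokes the known Lebesgue case of the size estimate. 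This yields $\frac1{|J|}\int_J |S_Tf - c_J| \le C\sup_{P\in T: I_P\subsetneq J}\frac1{|I_P|}\int |f|\widetilde\chi_{I_P}^{N+4}$, which in turn is bounded by $C\inf_{x\in J\cap I_T}\M_1(f\widetilde\chi_{I_T}^N)(x)$ exactly as in the Lemma's proof (only $P$ with $I_P\subsetneq J$ contribute, and all such $I_P$ are contained in $I_T$). Since replacing $c_J$ by the true average $\langle S_Tf\rangle_J$ only costs a factor of $2$ (the average minimizes the mean deviation up to a factor $2$, or directly $\frac1{|J|}\int_J|S_Tf-\langle S_Tf\rangle_J| \le \frac2{|J|}\int_J|S_Tf - c_J|$), the oscillation bound holds uniformly in $J$, and taking the supremum gives the claim.

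The only point needing a word of care is the case of dyadic $J$ that genuinely meet $I_T$ but are not contained in it, and the case of $J$ disjoint from $I_T$. If $J\cap I_T = \emptyset$ then $S_Tf$ vanishes on $J$ (all $I_P\subset I_T$), so the oscillation is zero. If $J \supsetneq I_T$ or $J$ overlaps $\partial I_T$, then the set of $P\in T$ with $I_P\subsetneq J$ is still a subfamily of $T$ with all spatial intervals inside $I_T$, so the same chain of inequalities applies and the right-hand side is still controlled by $\inf_{x\in I_T}\M_1(f\widetilde\chi_{I_T}^N)(x)$ — here one uses that $J\cap I_T$ is nonempty so there is a common point at which to evaluate the maximal function, or more simply that $\M_1(f\widetilde\chi_{I_T}^N)$ is essentially constant (up to the usual doubling factors absorbed into $C_N$) on scales $\ge |I_T|$ near $I_T$. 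I do not anticipate a genuine obstacle here; the corollary is a bookkeeping consequence of the displayed estimates in the proof of Lemma~\ref{l.sizebound}, and the remark preceding the statement ("the following bound was proved in the above proof") already signals that no new argument is required.
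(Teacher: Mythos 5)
The proposal takes the same route the paper sketches (extracting the Corollary from the chain of displayed inequalities in the proof of Lemma~\ref{l.sizebound}), but there is a genuine gap, and it is the same gap the paper glosses over. The last displayed inequality in that proof,
\begin{equation*}
\sup_{P\in T:\ I_P\subsetneq J}\ \frac 1{|I_P|}\int |f|\,\widetilde\chi_{I_P}^{N+4}\ \le\ C\,\inf_{x\in J\cap I_T}\M_1\big(f\widetilde\chi_{I_T}^N\big)(x),
\end{equation*}
is not correct as a uniform statement, and the Corollary's $\inf_{x\in I_T}$ is not obtainable from the displayed estimates. Concretely: fix $m$ large, take $T$ with a single bitile $P$, $I_P=[0,2^{-m})$, $I_T=[0,1)$ (a legitimate $2$-overlapping tree since $\omega_T\subset\widetilde\omega_P$ is a condition on frequency side only), and take $f=1_{I_P}$ suitably modulated so that $|\langle f,\phi_{P_1}\rangle|\sim|I_P|^{1/2}$. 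Then $S_Tf\sim 1_{I_P}$, so $\|S_Tf\|_{BMO}\sim 1$; but for $x$ near the right endpoint of $I_T$, every interval containing $x$ and meeting $I_P$ has length $\gtrsim 1$, so $\M_1(f\widetilde\chi_{I_T}^N)(x)\sim 2^{-m}$, and $\inf_{x\in I_T}\M_1(f\widetilde\chi_{I_T}^N)(x)\sim 2^{-m}$. So the claimed bound $\|S_Tf\|_{BMO}\lesssim\inf_{x\in I_T}\M_1(f\widetilde\chi_{I_T}^N)(x)$ fails by an unbounded factor $2^m\sim|I_T|/|I_P|$.

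The issue is invisible in the proof of Lemma~\ref{l.sizebound} because there the quantity is evaluated at the point $x$ defining $(S_Tf)^\sharp(x)$; one can always take $x$ to lie in (or near) the offending $I_P$ and the pointwise bound \eqref{e.M1bound} is fine. But replacing that pointwise $x$ by an $\inf_{x\in I_T}$ costs exactly the factor $|I_T|/|I_P|$. Your proposed repair — that $\M_1(f\widetilde\chi_{I_T}^N)$ is \emph{``essentially constant on scales $\ge|I_T|$''} and therefore the infimum over $I_T$ controls the quantity — is not true and is the heart of the gap: $\M_1$ of an $L^1$ function concentrated on a tiny subinterval of $I_T$ varies over $I_T$ by precisely the ratio of scales, as the example above shows. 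What \emph{is} extractable from the proof of Lemma~\ref{l.sizebound} is the pointwise sharp-function bound $(S_Tf)^\sharp\le C\M_1(f\widetilde\chi_{I_T}^N)$, hence $\|S_Tf\|_{BMO}\le C\sup_{x}\M_1(f\widetilde\chi_{I_T}^N)(x)$, or equivalently a bound by $\sup_{P\in T}\inf_{x\in I_P}\M_1(f\widetilde\chi_{I_T}^N)(x)$; turning this into the advertised $\inf_{x\in I_T}$ requires an additional hypothesis (e.g.\ $I_T=I_P$ for the maximal $P\in T$, or that $f$ lives far from $I_T$ so that $\M_1(f\widetilde\chi_{I_T}^N)$ really is approximately constant on $I_T$ — which is the situation in the downstream application in Claim~\ref{cl.M2bound}), and you should not assert it in general.
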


For convenience, in the rest of the paper we say that a collection $\T$ of $2$-overlapping trees is  well-separated if the following conditions are satisfied:
\begin{itemize}
\item[(i)]  If $T, T'\in \T$ are two different trees, and $P \in T$ and $P' \in T'$ and $|I_P|> |I_{P'}|$ then either $C_3 \omega_{P_1} \cap C_3 \omega_{P'_1} =\emptyset$ or  $I_{P'} \cap I_T = \emptyset$.
\item[(ii)] If $P, P'\in  \bigcup_{T\in \T}T$ are two different bitiles with $|I_P| = |I_{P'}|$ then $I_P\times C_3\omega_{P_1}$ and $I_{P'}\times C_3 \omega_{P'_1}$ are disjoint.
\end{itemize}

\begin{lemma}\label{l.size} Let $\P$ be a collection of bitiles with size bounded above by $2\alpha$, some $\alpha>0$. Then we can find a collection $\T$ of trees  such that:
\begin{itemize}
\item The bitile collection $\P - \bigcup_{T\in \T} T$ has size less than $\alpha$.
\item If another tree collection $\T'$ covers $\bigcup_{T\in \T}T$ then for some $C=C(w)<\infty$
\begin{equation}\label{e.efficient}
\sum_{T\in \T} w(I_T) \le C\sum_{T'\in \T'} w(I_{T'}) \,.
\end{equation}
\item If $q_0\in (q,\infty)$ then there exists $\beta=\beta(p,w,q,q_0)<\infty$ such that  for any $k\ge 0$ and for any $1\le p < \infty$ we have
\begin{equation}\label{e.topinterval}
\| \sum_{T\in \T} 1_{2^k T} \|_{L^p(w)} \le C 2^{\beta k} \alpha^{-2q_0} \|f\|_{L^{2pq_0}(w)}^{2q_0} \,.
\end{equation}
Here $C=C(p,w,q,q_0)<\infty$.
\end{itemize}
\end{lemma}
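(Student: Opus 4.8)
The plan is to prove Lemma~\ref{l.size} by the standard greedy selection of extremal trees, carried out in the weighted setting, with three distinct bookkeeping devices for the three conclusions. First I would perform the selection: so long as the remaining bitile collection has size at least $\alpha$, pick a $2$-overlapping tree $T$ witnessing (up to a factor close to $2$) the size, i.e. with $w(I_T)^{-1/2}\|S_T f\|_{L^2(w)}\ge \alpha$. Having selected the spatial/frequency data $(I_T,\xi_T)$, I would enlarge $T$ to the \emph{maximal} tree with that data inside the current collection, split off the $2$-overlapping part (the $2$-lacunary leftover gets fed back and costs nothing for the size, since size only sees $2$-overlapping trees, but it must be tracked for the counting bounds), and remove the selected bitiles. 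Iterating, and using finiteness of $\P$, produces $\T$ with $\size(\P-\bigcup_{T\in\T}T)<\alpha$. To arrange well-separatedness of $\T$ I would, as usual, first decompose $\P$ according to the (finitely many, by (S1)) scales modulo a large integer and according to a bounded coloring of the frequency lattice so that overlapping $C_3$-dilates at equal scales are forced to coincide (Remark~\ref{r.lattice}); well-separatedness (i)--(ii) is then automatic from maximality of the selected trees together with (S2), (S3) and Remark~\ref{r.treelacunary}. Doing the argument one color class at a time loses only an $O(1)$ factor throughout.

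For the efficiency estimate \eqref{e.efficient}: if $\T'$ is any tree collection covering $\bigcup_{T\in\T}T$, then for each selected $T$ there is some $T'\in\T'$ containing a designated bitile of $T$ at the top scale, hence $I_T\subset I_{T'}$ up to dilation, and the top frequency intervals are comparable; since the selected trees have pairwise "disjointish" tops (again by maximality and the separation conditions), for a fixed $T'$ the tops $I_T$ assigned to it are essentially disjoint subintervals of a dilate of $I_{T'}$, so $\sum_{T\mapsto T'} w(I_T)\le C\, w(c\,I_{T'})\le C\, w(I_{T'})$ by the doubling property \eqref{e.doubling}. Summing over $T'$ gives \eqref{e.efficient}. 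This is the soft part.

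The main obstacle is the counting/overlap bound \eqref{e.topinterval}, which is the weighted analogue of the classical $\sum_T |I_T|\lesssim \alpha^{-2}\|f\|_2^2$, upgraded to control $\|\sum_T 1_{2^kT}\|_{L^p(w)}$ with a polynomial-in-$k$ loss and with an extra exponent $q_0>q$ to absorb the weight. The mechanism: each selected $T$ has $\alpha^2 w(I_T)\le \|S_T f\|_{L^2(w)}^2=\sum_{P\in T}|I_P|^{-1}|\langle f,\phi_{P_1}\rangle|^2 w(I_P)$, and the selected trees (within one color class) have $C_3$-separated frequency components at each scale over disjoint spatial intervals, so $\sum_{T}\|S_Tf\|_{L^2(w)}^2$ is controlled — via an almost-orthogonality / $TT^*$ argument using the Fourier-packet decay \eqref{e.fourierpacket} and a Fefferman--Stein weighted maximal inequality — by $\int (\M_q f)^2 w$ or rather by $\|f\|_{L^{2q_0}(w)}^{2q_0}$ after raising to a suitable power and invoking the weighted maximal bound $\|\M_{q,w}\|_{L^s(w)\to L^s(w)}<\infty$ for $s>1$; this forces the appearance of $q_0>q$. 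To get the full $L^p(w)$ bound on $\sum_T 1_{2^kT}$ and not merely the $L^1(w)$ bound on $\sum_T w(I_T)$, I would run this argument not for $\T$ itself but, for each dyadic height, for the subcollection of trees whose "local density of tops" exceeds that height, deducing a Carleson-type packing condition for the tops $\{I_T\}$ with respect to $w$, i.e. $\sum_{I_T\subset I} w(I_T)\lesssim \alpha^{-2q_0}\,(\text{something})\,w(I)$; combined with doubling \eqref{e.doubling} to handle the dilation $2^k\mapsto 2^{\beta k}$, a standard Carleson-embedding/John--Nirenberg computation converts the packing condition into \eqref{e.topinterval}. The delicate points are (a) keeping the almost-orthogonality honest despite the imperfect localization of Fourier wave packets — this is exactly the technical overhead flagged in the introduction and is handled by the tails in \eqref{e.fourierpacket} and the rapidly decaying $\widetilde\chi_I$ factors — and (b) tracking how large $\beta$ and how large the exponent $2q_0$ must be, both of which are permitted to depend on $w,p,q,q_0$.
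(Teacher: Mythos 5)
Your selection step and overall framework are fine, but your argument for the efficiency estimate \eqref{e.efficient} has a genuine gap, and your sketch for \eqref{e.topinterval} does not obviously reproduce the mechanism that is actually needed.

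For \eqref{e.efficient}: you assign each selected tree $T$ to some $T'\in\T'$ containing a top-scale bitile of $T$, and then claim that for fixed $T'$ the assigned tops $I_T$ are ``essentially disjoint subintervals of a dilate of $I_{T'}$.'' This is false. A single $T'$ can contain top-scale bitiles of arbitrarily many selected trees at strictly decreasing scales, with \emph{nested} top intervals. Well-separatedness only forces disjointness of the rectangles $I_P\times C_3\omega_{P_1}$ in the phase plane; it says nothing about the spatial tops $I_T$, and the possible nesting of those tops is precisely why the counting function $\sum_{T}1_{I_T}$ can be unbounded and requires the considerably harder estimate \eqref{e.topinterval}. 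The correct proof runs through the bitiles rather than the tops: by the selection condition \eqref{e.T2} one has $\sum_T w(I_T)\lesssim \alpha^{-2}\sum_{P\in\Q_2}|a_P|^2\,w(I_P)/|I_P|$, and then each $T'\in\T'$ is split into three pieces $T'_0,T'_1,T'_2$ according to the position of $\xi_{T'}$ relative to $C_3\omega_{P_1}$, the gap, and $C_2\omega_{P_2}$. Each piece contributes $\lesssim\alpha^2 w(I_{T'})$: $T'_2$ because it is $2$-overlapping and $\size(\P)\le 2\alpha$; $T'_0\cap\Q_2$ because those bitiles are forced to be spatially disjoint; and $T'_1\cap\Q_2$ because it can be regrouped into $O(1)$ collections of $2$-overlapping trees with disjoint tops, using the combinatorial claim \eqref{e.c2omegap2}. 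That last claim is proved using the requirement --- which your write-up omits from the selection algorithm --- that at each stage one picks a qualifying tree $T_2$ of \emph{minimal} top frequency $\xi_{T_2}$; without this ordering, the regrouping of $T'_1\cap\Q_2$ can fail, so the omission is not merely cosmetic.

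For \eqref{e.topinterval}: your $TT^*$-plus-Carleson-embedding sketch is not unreasonable in spirit, but it skips the machinery that actually supplies the $L^p(w)$ bound with the $q_0>q$ loss, namely the good-$\lambda$ reduction \eqref{e.goodlambda} to the local weak-type estimate of Lemma~\ref{l.Nweak}, which in turn rests on the sharp-function estimate of Claim~\ref{cl.M2bound} and the small-exponent logarithmic orthogonality of Lemma~\ref{l.separatedtrees}. As written, it is not clear that your outline gets past an $L^1(w)$ packing bound, nor how it produces the $q_0$-dependence, without in effect rederiving this good-$\lambda$ argument.
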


\begin{proof} For convenience let $a_P=\<f,\phi_{P_1}\>$. We follow the standard algorithm from \cite{lacey-thiele-carleson}. If $\size(\P) \ge \alpha$ then there exists a non-empty $2$-overlapping tree $T_2\subset \P$ such that $\|S_{T_2} f\|_{L^2(w)} \ge \alpha^2 w(I_{T_2})$.  We select such a tree with minimal value of $\xi_{T_2}$\footnote{To be more careful, one can fix a top frequency for each of these trees, and then select one tree  (there are only finitely many of them) whose top frequency is minimal.}, and let $T$ be the maximal tree in $\P$ with top data $(I_{T_2}, \xi_{T_2})$. We then remove from $\P$ the bitiles in $T$ and repeat this argument until the remaining collection of bitiles has size less than $\alpha$. We obtain a collection $\T$ of trees such that
\begin{itemize}
\item $\P - \bigcup_{T\in \T} T$  has size less than $\alpha$;
\item Each $T\in \T$ contains  a $2$-overlapping  subtree $T_2$ such that
\begin{equation}\label{e.T2}
w(I_T) \le C\alpha^{-2} \|S_{T_2}f\|_{L^2(w)}^2 = C\alpha^{-2}\sum_{P\in T_2} |a_P|^2 \frac{w(I_P)}{|I_P|}  \,.
\end{equation}
\end{itemize}	
It then follows from a standard geometrical consideration that  the tree collection $\T_2:=\{T_2: T\in \T\}$ is well-separated when the constant $K$ in (S\ref{i.S3}) is chosen sufficiently large (see also Remark~\ref{r.lattice}). We omit the details.

\underline{Proof of \eqref{e.efficient}:} Assume that $\T'$ covers $\Q:=\bigcup_{T\in \T}T$, without loss of generality we can assume $\bigcup_{T'\in \T'}T' = \Q$. Let $\Q_2 = \bigcup_{T\in \T}T_2$. It follows from \eqref{e.T2} that
\begin{equation}\label{e.sumT}
\sum_{T \in \T} w(I_T) \le C\alpha^{-2} \sum_{P\in \Q_2} |a_P|^2 \frac{w(I_P)}{|I_P|} \,.
\end{equation}
Now, divide each $T'\in \T'$ into three trees, 
$$T'_0 = \{P\in T': \inf C_2 \omega_{P_1} \le \xi_{T'} < \sup  C_3 \omega_{P_1}\}\,,$$
$$T'_1 = \{P\in T': \sup C_3 \omega_{P_1} \le \xi_{T'} < \inf  C_2 \omega_{P_2}\}\,,$$
$$T'_2 = \{P\in T': \xi_{T'} \in  C_2 \omega_{P_2}\} \,.$$
Clearly, $T'_2$ is $2$-overlapping. Since $\size(\P)\le C\alpha$, we have
\begin{equation}\label{e.T'2}
\sum_{P\in T'_2} |\<f,\phi_{P_1}\>|^2 \frac{w(I_P)}{|I_P|} = \|S_{T'_2}f\|_{L^2(w)}^2 \le C\alpha^2 w(I_{T'}) \,.
\end{equation}
On the other hand, since $\T_2$ is well separated, the rectangles $I_P \times [\inf C_2 \omega_{P_1}, \sup C_3 \omega_{P_1})$ with $P\in \Q_2$ are pairwise disjoint in the phase lane. This implies that the  bitiles of  $T'_0\cap \Q_2$ are spatially disjoint (since their frequency intervals overlap). Thus,
\begin{equation}\label{e.T'0}
\sum_{P \in T'_0 \cap \Q_2} |\<f,\phi_{P_1}\>|^2 \frac{w(I_P)}{|I_P|} \le \sum_{P\in T'_0\cap \Q_2} \size(\{P\})^2 w(I_P) \le C\alpha^2w(I_{T'}) \,.
\end{equation}
Next, we show that $T'_1\cap \Q_2$ can be grouped into $O(1)$ collections of $2$-overlapping trees whose top intervals are disjoint. Together with the given assumption on the size of $\P$, this would imply
\begin{equation}\label{e.T'1}
\sum_{P \in T'_1 \cap \Q_2} |\<f,\phi_{P_1}\>|^2 \frac{w(I_P)}{|I_P|} \le C \alpha^2 \omega(I_{T'}) \,.
\end{equation}
Let $M$ be the set of elements of $T'_1\cap \Q_2$ with maximal spatial intervals. The grouping of elements in $T'_1 \cap \Q_2$ can be done as follows:
\begin{itemize}
\item Any element $P\in M$ can be viewed as one $2$-overlapping tree, and we place these single-element trees in to the first tree collection.
\item For any $P \in M$, we show below that we can  place every $P' \in T'_1\cap \Q_2$  such that $I_{P'}\subsetneq I_P$ in $O(1)$ trees sharing the top interval $I_P$.
\end{itemize}
Since the interval $\{I_P, P\in M\}$ are disjoint, it remains to show that if $P' \in T'_1\cap \Q_2$  and $I_{P'}\subsetneq I_P$ then
\begin{equation}\label{e.c2omegap2}
\inf C_2 \omega_{P'_2} < \sup C_2 \omega_{P_2} < \sup C_2 \omega_{P'_2} \,.
\end{equation}
Indeed, since $|\omega_{P'_2}|=\frac 1{|I_{P'}|}\ge \frac 2{|I_P|}$ it follows from \eqref{e.T'1} that we may take $-\frac 1{2|I_P|} + \sup C_2 \omega_{P_2}$ or $\frac 1{2|I_P|} + \sup C_2 \omega_{P_2}$ as the top frequency for these trees.

To see the first inequality in \eqref{e.c2omegap2},  we assume (towards a contradiction) that $\sup C_2 \omega_{P_2} \le \inf C_2 \omega_{P'_2}$. By the selection algorithm, the $2$-overlapping tree $S \in \T_2$ that contains $P$ must be selected before the $2$-overlapping tree $S'$ of $P'$. Now, by definition of 
$T'_1$ we have
$$[\sup C_3\omega_{P_1},\inf C_2\omega_{P_2}) \cap [\sup C_3\omega_{P'_1},\inf C_2\omega_{P'_2}) \ne \emptyset$$ 
(they both contains $\xi_{T'}$). On the other hand, by ensuring the constant $K_0$ is sufficiently large in the separation assumption (S\ref{i.S3}), we  have $\omega_{P} \subset \text{convex hull}(C_2\omega_{P'_1} \cap C_2 \omega_{P'_2})$.  But then $P'$ must be cleared out as part of the maximal tree with the same top data as $S$,  leading to a contradiction. This proves the first half of \eqref{e.c2omegap2}.

To see the second inequality in \eqref{e.c2omegap2}, as before exploit the fact that
$$[\sup C_3\omega_{P_1},\inf C_2\omega_{P_2}) \cap [\sup C_3\omega_{P'_1},\inf C_2\omega_{P'_2}) \ne \emptyset \,.$$ 
By ensuring the constant $K_0$ in the separation assumption (S\ref{i.S3}) is sufficiently large, we have $|\omega_{P'_2}|\ge |\omega_{P_2}|$. As a consequence, if $sup\, C_2 \omega_{P_2} \ge sup \, C_2 \omega_{P'_2}$ then the interval $[sup\, C_3\omega_{P_1},  inf\, C_2\omega_{P_2})$ will be above $inf \, C_2\omega_{P'_2}$, contradicting the nonempty intersection. This completes the proof of \eqref{e.c2omegap2} and hence \eqref{e.T'1}.

Finally, collecting inequalities \eqref{e.T'2} \eqref{e.T'0} \eqref{e.T'1}, we obtain
$$\sum_{P \in T'} |\<f,\phi_{P_1}\>|^2 \frac{w(I_P)}{|I_P|}  \le C\alpha^2 w(I_{T'}) \,.$$
Summing over $T'\in \T'$ and using \eqref{e.sumT}, we obtain the desired estimate \eqref{e.efficient}.  \\

\noindent \underline{Proof of \eqref{e.topinterval}:} 
Fix $k$ and let 
$$N^{[k]} :=\sum_{T\in \T} 1_{2^k I_T} \,.$$
It suffices to show the following good lambda estimate:  given any $L\in (0,\infty)$ there exists $c_0\in (0,\infty)$ and $c \in (0,\infty)$  such that
\begin{equation}\label{e.goodlambda}
w(\{N^{[k]}>\lambda\} \cap E^{[k]}_{\lambda}) \le  \frac 1 {L} w(\{N^{[k]} >\lambda/4\}) \,.
\end{equation}
\begin{equation}\label{e.Elambda}
\text{where} \qquad E^{[k]}_\lambda:=\{\M_{2q,w} f \le c 2^{-c_0 k} \alpha \lambda^{\frac 1{2q_0}} \} \,.
\end{equation}
Indeed, choosing $L$ sufficiently large (depending on $p \in [1,\infty)$) and applying a standard bootstrapping argument, we obtain 
\begin{gather*}
\|\sum_{T\in \T}1_{I_T}\|_{L^p(w)} \le C 2^{O(k)} \alpha^{-2q_0} \|\M_{2q,w}(f)\|_{L^{2pq_0}(w)}^{2q_0}
\\ \le C 2^{O(k)} \alpha^{-2q_0} \|f\|_{L^{2pq_0}(w)}^{2q_0}\,,
\end{gather*}
as desired. Here we have used the fact that $\M_{1,w}$ is bounded from $L^t (w) \to L^t (w)$ for any $1<t<\infty$ and any positive weight $w$; note that we always have $2pq_0>2q$.

To prove \eqref{e.goodlambda}, we use the following estimate which follows from Lemma~\ref{l.Nweak} (see the remark after the Lemma): 
for any dyadic interval $I$ and  $q_0 \in (q,\infty)$ it holds that
\begin{gather}\label{e.NIweak}
w(\{N^{[k]}_I>\lambda/4\}) \le C 2^{O(k)} \alpha^{-2q} \lambda^{-\frac {q} {q_0}} w(I) [\inf_{x\in I} \M_{2q,w} (f\widetilde \chi_I^N)(x)]^{2q}
\\ \notag
\textup{where} \quad N^{[k]}_I := \sum_{T\in \T: I_T\subset I} 1_{I_T} \,.
\end{gather}
Let $\I$ be the collection of  all maximal dyadic intervals of $\{N^{[k]}>\lambda/4\}$. We apply \eqref{e.NIweak} to elements of $\I$ that intersect $E^{[k]}_\lambda$. Let $I$ be one such interval, then it follows from the maximality of $I$ that
$\{N^{[k]}>\lambda\} \cap I$ is a subset of $\{N^{[k]}_I > \lambda/4\}$. Thus,
$$w(\{N^{[k]}>\lambda\} \cap I) \le C  2^{O(k)}\Big[ \alpha^{-2q} \lambda^{-\frac {q}{q_0}} w(I)  \Big] \Big[c 2^{-c_0 k}\alpha  \lambda^{\frac 1{2q_0}} \Big]^{2q}$$
and by choosing $c$ sufficiently small and $c_0$ sufficiently large we obtain
$$w(\{N^{[k]}>\lambda\} \cap I)  \le C  c^{2q} w(I) \le \frac {w(I)}L \,.$$
Summing the above estimates over all $I\in \I$ that intersects $E^{[k]}_\lambda$, we obtain \eqref{e.goodlambda}:
$$w(\{N^{[k]}>\lambda\} \cap E^{[k]}_\lambda) \le \sum_{I\in \I: I\cap E_\lambda^{[k]} \ne \emptyset} w(\{N^{[k]}>\lambda\} \cap I)$$
$$ \le \frac 1 L  \sum_{I \in \I} w(I) = \frac 1 L w(\{N^{[k]}>\lambda/4\}) \,.$$
\end{proof}

\begin{lemma}\label{l.Nweak} Let $I$ be an interval and let $\T$ be a well-separated collection of $2$-overlapping trees such that  for any $T\in \T$ we have $I_T\subset I$, and
\begin{equation}\label{e.largesize}
w(I_T) \le C\alpha^{-2}  \|S_Tf \|_{L^2(w)} \,.
\end{equation}
Then for any $q_0\in (q,\infty)$ and $N>0$ there is  $C=C(q_0,w,N)<\infty$ such that
\begin{gather}
\label{e.Nweak} w(\{N^{[k]}>\lambda\}) \le C2^{O(k)} \Big[\alpha^{-1} \lambda^{-\frac {1} {2q_0}}  \|f\widetilde\chi_I^N\|_{L^{2q}(w)}\Big]^{2q}\,,
\\
\textup{where}\quad N^{[k]}:=\sum_{T\in \T}1_{2^k I_T} \,. 
\end{gather}
The implicit constant in $O(k)$ depends on $w$ and $q$.
\end{lemma}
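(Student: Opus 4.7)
The plan is to reduce the weak-type estimate to the $\lambda$-free bound $\sum_T w(I_T) \le C\alpha^{-2q}\|f\widetilde\chi_I^N\|_{L^{2q}(w)}^{2q}$ and then extract it from the size hypothesis via H\"older's inequality and a weighted vector-valued quadratic estimate.

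\textbf{Reduction to a $\lambda$-free bound.} Combining Chebyshev $\lambda w(\{N^{[k]}>\lambda\}) \le \int N^{[k]} w \le \sum_T w(2^kI_T)$ with the trivial estimate $w(\{N^{[k]}>\lambda\}) \le \sum_T w(2^kI_T)$, observing that $q/q_0 \in (0,1]$ so $\min(1,\lambda^{-1}) \le \lambda^{-q/q_0}$ for all $\lambda>0$, and invoking the doubling inequality $w(2^kI_T)\le 2^{\gamma k} w(I_T)$, we obtain
\[
w(\{N^{[k]}>\lambda\})\le C\, 2^{\gamma k}\lambda^{-q/q_0}\sum_T w(I_T).
\]
So it suffices to establish the displayed $\lambda$-free bound on $\sum_T w(I_T)$.

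\textbf{From the hypothesis to a square function.} For each $T$, since $S_Tf$ is supported in $I_T$, H\"older's inequality gives $\|S_Tf\|_{L^2(w)}^2 \le \|S_Tf\|_{L^{2q}(w)}^2 \, w(I_T)^{(q-1)/q}$. Combined with $\alpha^2 w(I_T)\le C\|S_Tf\|_{L^2(w)}^2$ this yields $\alpha^{2q} w(I_T)\le C\|S_Tf\|_{L^{2q}(w)}^{2q}$. Summing over $T$ and using the pointwise inequality $\sum_T a_T^{2q}\le \bigl(\sum_T a_T^2\bigr)^q$ (valid for $q\ge 1$ and $a_T\ge 0$),
\[
\alpha^{2q}\sum_T w(I_T)\le C\Bigl\|\bigl(\sum_T (S_Tf)^2\bigr)^{1/2}\Bigr\|_{L^{2q}(w)}^{2q}.
\]

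\textbf{Main obstacle: weighted vector-valued quadratic estimate.} What remains is
\[
\Bigl\|\bigl(\sum_T (S_Tf)^2\bigr)^{1/2}\Bigr\|_{L^{2q}(w)} \le C\|f\widetilde\chi_I^N\|_{L^{2q}(w)}.
\]
Well-separation forces the phase-plane rectangles $I_P\times [\inf C_2\omega_{P_1},\sup C_3\omega_{P_1})$ to be pairwise disjoint across all trees (as noted in the proof of Lemma~\ref{l.size}), so the wave packets $\phi_{P_1}$ enjoy a Bessel-type almost-orthogonality. Since $w\in A_q\subset A_{2q}$, the weighted version of this quadratic bound is obtained from the unweighted analog via weighted Littlewood--Paley / vector-valued Calder\'on--Zygmund techniques, with the tails of the non-compactly-supported Fourier packets absorbed into the $\widetilde\chi_I^N$ factor by standard off-diagonal decay estimates; this step is the central technical difficulty, while the remainder of the argument is organizational bookkeeping.
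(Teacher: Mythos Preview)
Your reduction steps are clean and correct: Chebyshev plus the trivial bound do give $w(\{N^{[k]}>\lambda\})\le C2^{\gamma k}\lambda^{-q/q_0}\sum_T w(I_T)$, and the H\"older/embedding argument correctly yields $\alpha^{2q}\sum_T w(I_T)\le C\bigl\|(\sum_T(S_Tf)^2)^{1/2}\bigr\|_{L^{2q}(w)}^{2q}$. The problem is entirely in the last step.

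The claimed bound $\bigl\|(\sum_T(S_Tf)^2)^{1/2}\bigr\|_{L^{2q}(w)}\le C\|f\widetilde\chi_I^N\|_{L^{2q}(w)}$, uniform in the well-separated collection $\T$, cannot be obtained by ``weighted Littlewood--Paley / vector-valued Calder\'on--Zygmund techniques.'' The square function $S_{\P}$ is built from wave packets with \emph{arbitrary modulations}; it is a modulation-invariant object, not a Calder\'on--Zygmund operator, and standard weighted CZ theory simply does not apply. Even in the unweighted $L^2$ case, the Bessel-type inequality for well-separated collections (the paper's Lemma~\ref{l.separatedtrees}) comes with an error term involving $\sup_P|a_P|/|I_P|^{1/2}\cdot(\sum_T|I_T|)^{1/2}$, and closing the estimate requires a bootstrap using the size hypothesis on the trees --- it is not a clean almost-orthogonality statement. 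Your claimed bound is actually \emph{stronger} than the lemma (it would give the endpoint $q_0=q$), and the paper never establishes it.

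The paper's proof is organized precisely to circumvent this obstacle. It slices $\{N^{[k]}>\lambda\}$ into level sets via $\T_l=\{T:2^kI_T\not\subset\{N^{[k]}>2^{l+1}\lambda\}\}$, so that the corresponding counting functions satisfy $\|N_l\|_\infty\le 2^{l+1}\lambda$. The key technical step (Claim~\ref{cl.M2bound}) is a sharp-function bound $(S_{\P_l}f)^\sharp\le C\|N_l\|_\infty^\delta\bigl(\M_2 f_I+[\alpha\M_2(N_l^{1/2})]^s(\M_2 f_I)^{1-s}\bigr)$, proved by localizing to dyadic intervals $J$, invoking Lemma~\ref{l.separatedtrees} (the Bessel inequality \emph{with} error term), and interpolating with the single-tree BMO bound. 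The factor $\|N_l\|_\infty^\delta$ arises from a H\"older step passing from $\ell^2$ to $\ell^p$ over trees and is the reason the level-set decomposition is necessary. After a bootstrap using the hypothesis \eqref{e.largesize}, one obtains $w(\{N_l^{[k]}>2^l\lambda\})\le C2^{O(k)}2^{-l(1-\epsilon)}\alpha^{-2q}\lambda^{-1+\epsilon}\|f_I\|_{L^{2q}(w)}^{2q}$, which sums in $l$. The $\epsilon$-loss in the $\lambda$-exponent is exactly why the conclusion requires $q_0>q$ rather than $q_0=q$.
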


\begin{remark}\label{r:}
As a consequence of \eqref{e.Nweak}, we obtain
\begin{equation}
\label{e.NweakMaximal} w(\{N^{[k]}>\lambda\}) \le C  2^{O(k)} w(I) \Big[\alpha^{-1} \lambda^{-\frac {1} {2q_0}}  \inf_{x\in I} \M_{2q,w} (f\widetilde\chi_I^N)(x)\Big]^{2q} \,.
\end{equation}
\end{remark}

\begin{proof} Since $N^{[k]}$ is integer-valued, without loss of generality we may assume $\lambda \ge 1/2$. We estimate
\begin{equation}\label{e.Nleveldecomp}
w(\{N^{[k]} > \lambda\})  \le \sum_{l\ge 0} w(\{2^l \lambda < N^{[k]} \le 2^{l+1}\lambda \})
\end{equation}
and it is not hard to see that
\begin{equation}\label{e.reducetoNl}
w(\{2^l\lambda < N^{[k]} \le 2^{l+1}\lambda\}) \le w(\{N^{[k]}_l>2^l \lambda\})
\end{equation}
$$\text{where } \qquad N^{[k]}_l := \sum_{T\in \T_l} 1_{2^k I_T}\,,$$
$$\text{and } \qquad \T_l := \{T\in \T: 2^k I_T \not\subset \{N^{[k]} > 2^{l+1}\lambda\} \}$$
Write $N_l$ for $N^{[0]}_l$, clearly $N_l\le N^{[k]}_l$ for $k\ge 0$. We first show that
\begin{equation}\label{e.boundNl}
\|N_l\|_\infty \le 2^{l+1}\lambda \,.
\end{equation}
Indeed, take any $x$, and let $\T_x= \{T\in \T_l: x\in I_T\}$. Clearly,
$$N_l(x) \le \sum_{T\in \T_x} 1_{2^k I_T} \,.$$
Since the collection of top intervals of elements of $T_x$ is nested, there is one minimal element. Note that if $I_1 \subset I_2$ are two intervals then for $k\ge 0$ we have $2^k I_1 \subset 2^k I_2$. Therefore the intervals $2^k I_T$ with $T\in \T_x$ are also nested and the minimal of them contains a point $y\in \{N^{[k]} \le 2^{l+1}\lambda\}$ by definition of $\T_l$. Therefore,
$$N_l(x) \le N^{[k]}(y) \le 2^{l+1}\lambda\,,$$
completing the proof of \eqref{e.boundNl}.

Now, denote $\P_l = \bigcup_{T\in \T_l}T$ and as usual
$$S_{\P_l}f = ( \sum_{P\in \P_l} |\<f,\phi_{P_1}\>|^2 \frac {1_{I_P}}{|I_P|})^{1/2} \,.$$
It follows from  \eqref{e.largesize}, \eqref{e.doubling}, and H\"older's inequality  that
\begin{equation}\label{e.presharp}
\alpha^{2q} \|N^{[k]}_l \|_{L^1(w)}  \le C 2^{\gamma k} \|S_{\P_l} f\|_{L^{2q}(w)}^{2q} \,.
\end{equation}
For $N$ large let $f_I=f\widetilde \chi_I$. The key estimate in our proof of \eqref{e.Nweak} is 
\begin{claim}\label{cl.M2bound} For any $s \in (0,1)$ and $\delta>0$ there is $C=C(\epsilon,s,N) < \infty$ such that
\begin{equation}\label{e.M2bound}
(S_{\P_l} f)^{\sharp} \le C \|N_l\|_\infty^{\delta}\Big( \M_2 f_I  + \big[\alpha  \M_2 (N_l^{\frac 1{2}}) \big]^{s} (\M_2 f_I)^{1-s}\Big) \,.
\end{equation}
\end{claim}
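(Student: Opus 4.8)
The plan is to estimate the dyadic sharp maximal function of $S_{\P_l}f$ by working locally over a dyadic interval $J$ and comparing $S_{\P_l}f$ to the constant
$$c_J = \Bigl(\sum_{P\in \P_l : J\subset I_P} \frac{|\<f,\phi_{P_1}\>|^2}{|I_P|}\Bigr)^{1/2},$$
exactly as in the proof of Lemma~\ref{l.sizebound}. By the same Cauchy--Schwarz step used there, $\frac1{|J|}\int_J |S_{\P_l}f - c_J|\,dx$ is controlled by $|J|^{-1/2}$ times the $L^2$-norm of the square function associated to bitiles $P\in\P_l$ with $I_P\subsetneq J$. So the task reduces to a pointwise/averaged bound on that truncated square function, and the presence of the factor $\|N_l\|_\infty^\delta$ and the mixed term $[\alpha\,\M_2(N_l^{1/2})]^s(\M_2 f_I)^{1-s}$ tells us that we should split the bitiles $P\in\P_l$ with $I_P\subsetneq J$ into those whose spatial interval is "not too deep" (where $N_l$ is moderate, and the known Lebesgue tree estimate from \cite{MTTBiestFourier} gives a clean $\M_2 f_I$ bound) and the rest.

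The key new device is to exploit the bound $\|N_l\|_\infty\le 2^{l+1}\lambda$ from \eqref{e.boundNl}, together with the well-separatedness of $\T_l$, to say that the bitiles $P\in\P_l$ contributing at a given point can be organized into at most $\|N_l\|_\infty$ trees; hence only boundedly many scales contribute "per tree", and the overlap function $N_l$ quantitatively restricts how many bitiles of $\P_l$ sit below a fixed $J$. Concretely, I would write $\P_l\cap\{I_P\subsetneq J\} = \bigcup_T (T\cap\{I_P\subsetneq J\})$ over the trees $T\in\T_l$ with $I_T$ meeting $J$; for each such tree the square function over $T\cap\{I_P\subsetneq J\}$ is handled by Corollary~\ref{c.BMO} / the Lebesgue tree estimate, giving a bound in terms of $\alpha\, w(I_T)^{1/2}/|I_T|^{1/2}$ on $L^2$-scale, and in terms of $\M_2 f_I$ via the size hypothesis \eqref{e.largesize}; summing over the trees produces the $\M_2(N_l^{1/2})$ factor because $\sum_T 1_{I_T}=N_l$. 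The interpolation exponent $s$ and the slack $\delta$ appear when one trades off, via Hölder in the tree-sum, the "number of trees" (controlled by $\M_2(N_l^{1/2})^2\sim\M_2 N_l$, hence by $\|N_l\|_\infty$ up to a small power) against the size bound; choosing how much of $\|N_l\|_\infty$ to absorb into the $\delta$-power and how much into the $s$-th power is exactly the freedom the statement allows.

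I expect the main obstacle to be the bookkeeping of the spatial/frequency geometry needed to legitimately pass from "bitiles of $\P_l$ below $J$" to "boundedly many trees, each contributing boundedly many scales near $J$", because the Fourier wave packets are not perfectly localized: the cutoffs $\widetilde\chi_{I_P}^N$ have tails, so the split into "shallow" and "deep" bitiles and the summation over trees must be done with the Schwartz tails carefully absorbed (this is where the exponent $N$ on $\widetilde\chi_I$ and the freedom to lose powers of $\widetilde\chi$ as in the remark after the definition of $\widetilde\chi_I$ get used). A secondary technical point is to verify that the minimal-scale / nestedness argument underlying \eqref{e.boundNl} can be upgraded from a pointwise $L^\infty$ statement about $N_l$ to the averaged statement about $\M_2(N_l^{1/2})$ that actually appears on the right-hand side of \eqref{e.M2bound}; this should follow from the boundedness of $\M_2$ but needs the scales and the well-separation to interact correctly. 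Once \eqref{e.M2bound} is in hand, combining it with \eqref{e.sharp<}, \eqref{e.presharp}, the $L^{2q}(w)$-boundedness of $\M_{2q,w}$ (equivalently of $\M_2$ after the usual $A_q$ comparison), and Hölder in $l$ to sum \eqref{e.Nleveldecomp} will yield \eqref{e.Nweak}.
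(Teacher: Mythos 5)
Your high-level strategy is close to the paper's: localize to a dyadic $J$, subtract a constant $c_J$, reduce the sharp maximal estimate to an $L^2$ bound for the square function over bitiles with $I_P\subsetneq J$, bound tree-by-tree, and interpolate the $L^2$ estimate against the single-tree BMO estimate of Corollary~\ref{c.BMO} using the dyadic-shell decomposition $f=\sum_k f_k$ around $I\cap J$ to absorb the Schwartz tails. All of that matches what the paper actually does.

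However, there is a genuine gap at the heart of your sketch, namely the step ``summing over the trees produces the $\M_2(N_l^{1/2})$ factor because $\sum_T 1_{I_T}=N_l$.'' If you apply the single-tree size bound to each $T_J$ and then sum $\ell^2$ over trees, you get a pure $\alpha^2\int_J N_l$-type estimate; if instead you apply the Lebesgue tree estimate, you get a pure $(\M_2 f_I)^2$ estimate per tree, but then the sum over trees does not converge without an a priori count. The interpolated form $\bigl[\alpha\M_2(N_l^{1/2})\bigr]^{s}(\M_2 f_I)^{1-s}$ with $s$ \emph{arbitrarily small} is not obtained by such a H\"older trade-off at the level of the tree sum. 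The missing ingredient is precisely Lemma~\ref{l.separatedtrees} (the ``separated trees'' $L^2$ estimate), which gives
$$\Bigl(\sum_{T\in\T_l}\|S_{T_J}f\|_2^2\Bigr)^{1/2}\le C\|f\|_2 + C\alpha^{s}\|N_l^{1/2}\|_{L^2(J)}^{s}\|f\|_2^{1-s}$$
directly, with $0<s\le 1$ arbitrary. The paper even flags in the remark after Lemma~\ref{l.separatedtrees} that the classical $s=1/3$ version (from the standard Carleson arguments, which is what you would get by just combining tree estimates and well-separatedness) is \emph{not} sufficient here; the ability to take $s\to 0$ is what this particular claim needs, and it is a new lemma, not a routine summation.

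Two smaller points. First, your intuition that the bitiles below $J$ can be ``organized into at most $\|N_l\|_\infty$ trees, each contributing boundedly many scales'' is not how $\|N_l\|_\infty$ enters: there may be many trees and many scales; the $\|N_l\|_\infty^{\delta}$ factor in the paper comes from a H\"older step passing from the $\ell^p$-norm in the tree index to the $\ell^2$-norm, i.e.\ $\|(\sum_T|S_{T_J}f_k|^2)^{1/2}\|_p\le\|N_l\|_\infty^{1/2-1/p}\|(\sum_T|S_{T_J}f_k|^p)^{1/p}\|_p$, with $p>2$ taken close to $2$ so that $\delta=1/2-1/p$ is small. Second, the split into ``shallow'' and ``deep'' bitiles that you propose is not needed and does not appear in the paper's argument; the localization in scale is handled by the shell decomposition of $f$ alone. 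In short: the framework you outline is the right one, but without Lemma~\ref{l.separatedtrees} the central interpolation step does not close, and that lemma is the crucial new content behind Claim~\ref{cl.M2bound}.
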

Below we show \eqref{e.Nweak} using the above claim. It follows from  \eqref{e.presharp}, \eqref{e.M2bound}, and the assumption $w\in A_q$ that
$$\alpha \|N^{[k]}_l\|_{L^1(w)}^{\frac 1{2q}} \le C 2^{O(k)} \|(S_{\P_l} f)^{\sharp} \|_{L^{2q}(w)}  $$
$$\le C 2^{O(k)}\|N_l\|_{\infty}^{\delta} \Big(\|f_I\|_{L^{2q}(w)} + \Big[\alpha  \|N_l^{1/2}\|_{L^{2q}(w)}\Big]^{s}\|f_I\|_{L^{2q}(w)}^{1-s}\Big)$$
$$\le C 2^{O(k)}\Big(\|N_l\|_{\infty}^{\delta}  \|f_I\|_{L^{2q}(w)}  +  \|N_l\|_\infty^{\delta + s(\frac 1 2 - \frac 1 {2q})}  \alpha^s\|N_l\|_{L^1(w)}^{\frac s{2q}} \|f_I\|_{L^{2q}(w)}^{1-s}\Big) \,.$$
Here $\delta>0$ and $s>0$ will be chosen very close to $0$. Consequently, after bootstrapping, it follows that for any $\epsilon>0$
$$\alpha\|N_l^{[k]}\|_{L^1(w)}^{\frac 1 {2q}} \le C  2^{O(k)}\|N_l\|_\infty^{\epsilon/2q} \|f_I\|_{L^{2q}(w)} \,.$$
Therefore, it follows from the bound $\|N_l\|_\infty \le 2^{l+1}\lambda$ of  \eqref{e.boundNl} that
$$ w(\{N_l^{[k]}>2^l\lambda\}) \le C  2^{O(k)} 2^{-l(1-\epsilon)} \alpha^{-2q} \lambda^{-1+\epsilon} w(I) [\inf_{x\in I} \M_{2q,w} f(x)]^{2q} $$
Choosing $\epsilon>0$ very small allows for summation  over $l\ge 0$ of the above estimate. Using \eqref{e.Nleveldecomp} and \eqref{e.reducetoNl}, we obtain the desired estimate \eqref{e.Nweak}. \\

\noindent \underline{Proof of Claim~\ref{cl.M2bound}:} 
Fix any dyadic $J$. For any $T\in \T_l$ let $T_J := \{P \in T: I_P \subset J\}$, and by decomposing $T_J$ into $O(1)$ subtrees we may assume that $T_J$ is a tree with a new top interval $I_T\cap J$ for every $T \in \T_l$. It suffices to show that for any $x\in J$:
\begin{equation}\label{e.localM2bound}
\frac 1 {|J|^{1/2}} \|(\sum_{T\in \T_l} |S_{T_J}f|^2)^{1/2}\|_{L^{2}} \le \text{the value at $x$ of RHS of \eqref{e.M2bound} .}
\end{equation}
By Lemma~\ref{l.separatedtrees}, for any $0<s \le 1$ there is $C=C_s <\infty$ such that
\begin{equation}\label{e.multitree} 
(\sum_{T\in \T_l}  \|S_{T_J} f\|_2^2)^{1/2}  \le C\|f\|_2 + C\alpha^{s} \|N_l^{1/2}\|_{L^2(J)}^s \|f\|_2^{1-s} \,.
\end{equation}
Here we've used the fact that for any $P\in \P$:
$$\frac{|a_P|}{|I_P|^{1/2}} = \Big(\frac 1 {w(I_P)} \int |a_P|^2 \frac{1_{I_P}}{|I_P|} w(x)dx \Big)^{1/2}  \le \alpha \,.$$
Since for any $P\in T_J$ we have $I_P\subset I\cap J$, it follows from Corollary~\ref{c.BMO} that
\begin{equation}\label{e.singletree}
\|S_{T_J}f\|_{BMO}  \le C \inf_{x\in I\cap J} \M_1 (f \widetilde \chi_{I\cap J}^{N}) (x) \,.
\end{equation}
Interpolate the estimates \eqref{e.multitree} and \eqref{e.singletree} to prove \eqref{e.localM2bound} using a now-standard localization argument (see e.g. \cite{lacey-thiele-bht}). The idea is to
 decompose $f=\sum_{k\ge 0} f_k$ where $f_0 = f1_{I\cap J}$ and $f_k= f1_{2^k (I\cap J)\setminus 2^{k-1}(I\cap J)}$ for $k\ge 1$ and apply \eqref{e.multitree} and \eqref{e.singletree} to $f_k$. More specifically, for $p \in (2,\infty)$ we have
$$\|(\sum_{T\in \T_l} |S_{T_J}f_k|^p)^{1/p} \|_p = \Big(\sum_{T\in \T_l} \|S_{T_J}f_k\|_p^p\Big)^{\frac 1 p}$$
$$ \le  \Big(\sum_{T\in \T_l} \|S_{T_J}f_k\|_2^2\Big)^{\frac 1 p}  \sup_{T\in \T_l}\|S_{T_J}f_k\|_{BMO}^{1-\frac 2 p} $$
$$ \le C_{N,p} 2^{-Nk} |I\cap J|^{1/p} \inf_{x\in I\cap J} \Big( \M_2 f_I(x) +  \big[\alpha \M_2(N_l^{1/2})(x)\big]^{\frac {2s}p}  \big[\M_2 f_I(x) \big]^{1-\frac {2s}p}\Big)\,.$$
Summing over $k\ge 0$ we obtain
$$\|(\sum_{T\in \T_l} |S_{T_J}f|^p)^{1/p} \|_p$$
\begin{equation}\label{e.Lplp}
\le C  |J|^{1/p} \inf_{x\in J} \Big( \M_2 f_I(x) +  [\alpha \M_2(N_l^{1/2})(x)]^{\frac {2s}p}  \M_2 f_I(x)^{1-\frac {2s}p}\Big)  \,.
\end{equation}
On the other hand, using H\"older's inequality it follows that
\begin{equation}\label{e.Jholder}
\|(\sum_{T\in \T_l} |S_{T_J}f_k|^2)^{1/2} \|_p  \le  \|N_l\|_\infty^{\frac 1 2 - \frac 1 p} \|(\sum_{T\in \T_l} |S_{T_J}f|^p)^{1/p} \|_p \,.
\end{equation}
Combining \eqref{e.Lplp} and \eqref{e.Jholder} and use H\"older, it follows that
$$\frac 1 {|J|^{1/2}} \|(\sum_{T\in \T_l} |S_{T_J}f|^2)^{1/2}\|_{L^{2}} \le \frac 1 {|J|^{1/p}} \|(\sum_{T\in \T_l} |S_{T_J}f|^2)^{1/2}\|_{L^{p}}$$
$$\le C \|N_l\|_\infty^{\frac 12 - \frac 1 p} [ \M_2 f_I(x) + \big[\alpha \M_2 (\sqrt N_l)(x) \big]^{\frac{2s}p} \big[\M_2 f_I (x) \big]^{1-\frac {2s}p}  \,.$$
Choosing $p>2$ sufficiently close to $2$ we obtain the desired estimate \eqref{e.localM2bound}.

\end{proof}

The following Lemma,  needed for our proof of Claim~\ref{cl.M2bound}, is contained implicitly in \cite{ThieleHabilitation}, where in fact a stronger logarithmic variant was proved (see also \cite{hytonen-lacey} for a vector valued generalization).

\begin{lemma} \label{l.separatedtrees} Let $\T$ be a well-separated collection of $2$-overlapping trees and let $\P = \bigcup_{T\in \T} T$. Then for any $0<s\le 1$ it holds that
$$(\sum_{P\in \P} |\<f,\phi_{P_1}\>|^2)^{1/2}$$
\begin{equation}\label{e.separatedtrees}
\le C_s \Big(\|f\|_2 +    \Big[\sup_{P\in \P} \frac{|\<f,\phi_{P_1}\>|}{|I_P|^{1/2}} (\sum_{T\in \T} |I_T|)^{1/2}\Big]^s \|f\|_2^{1-s}  \Big)\,.
\end{equation}
\end{lemma}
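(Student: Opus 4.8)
The plan is to prove \eqref{e.separatedtrees} by a standard Bessel-type argument combined with the $TT^*$ method adapted to the well-separated structure, exactly in the spirit of the argument in \cite{ThieleHabilitation}. First I would observe that it suffices to prove the bound in the linearized dual form: writing $a_P = \langle f,\phi_{P_1}\rangle$, we want
$$\sum_{P\in\P} a_P \overline{b_P} \le C_s\Big(\|f\|_2 + \big[\sup_{P\in\P}\tfrac{|a_P|}{|I_P|^{1/2}}(\sum_{T}|I_T|)^{1/2}\big]^s\|f\|_2^{1-s}\Big)\cdot\|(b_P)\|_{\ell^2}$$
for an arbitrary sequence $(b_P)$ with $\|(b_P)\|_{\ell^2}=1$, which by Cauchy--Schwarz in the $b$ variable reduces to estimating $\|\sum_P a_P \phi_{P_1}\|_2^2 = \sum_{P,P'} a_P\overline{a_{P'}}\langle\phi_{P_1},\phi_{P'_1}\rangle$. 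Actually the cleanest route is: set $g = \sum_{P\in\P} a_P \phi_{P_1}$, expand $\|g\|_2^2$, and bound the off-diagonal terms using the frequency separation (condition (ii) of well-separatedness) and the spatial tail decay \eqref{e.fourierpacket} of the wave packets. Diagonal terms $\sum_P |a_P|^2$ are what we want to control.

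The key steps, in order: (1) Split the double sum $\sum_{P,P'}$ according to whether $|I_P| = |I_{P'}|$ or not. For the equal-scale terms, condition (ii) forces $I_P\times C_3\omega_{P_1}$ and $I_{P'}\times C_3\omega_{P'_1}$ to be disjoint, so either the frequency supports are disjoint (making $\langle\phi_{P_1},\phi_{P'_1}\rangle = 0$ since $\supp\widehat\phi_{P_1}\subset C_3\omega_{P_1}$) or the spatial intervals are disjoint, in which case the rapid decay of the wave packets gives an almost-orthogonality estimate summable via Schur's test. (2) For the unequal-scale terms, say $|I_P| > |I_{P'}|$, condition (i) of well-separatedness says either $C_3\omega_{P_1}\cap C_3\omega_{P'_1} = \emptyset$ (again killing the inner product by frequency support) or $I_{P'}\cap I_T = \emptyset$ where $T$ is the tree containing $P$; the latter combined with $I_P\subset I_T$ and the decay estimate again yields summable off-diagonal contributions. (3) Having reduced to the diagonal, we have $\|g\|_2^2 \approx \sum_P |a_P|^2$ up to the off-diagonal error, but this only gives $\sum_P|a_P|^2 \lesssim \|f\|_2^2$ directly (the trivial $s=1$-flavored bound with the wrong shape), so the real content is the interpolation-type improvement.

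For the improved bound with general $s$, the standard trick (following \cite{ThieleHabilitation}) is a stopping-time / iteration on the quantity $\sum_T |I_T|$: one uses that the single-tree operators are bounded on $L^2$ with the right localization, and that the trees are well-separated so their top intervals have controlled overlap, to run a recursive selection producing the factor $[\sup_P \frac{|a_P|}{|I_P|^{1/2}}(\sum_T|I_T|)^{1/2}]^s\|f\|_2^{1-s}$. Concretely I would prove the endpoint estimate $\sum_P |a_P|^2 \le C\,\delta\,(\sum_T|I_T|)$ where $\delta = \sup_P |a_P|^2/|I_P|$, by summing the trivial single-tile bound $|a_P|^2 \le \delta|I_P|$ over a tree and using that within a fixed tree the spatial intervals at each scale are disjoint and nested across scales so $\sum_{P\in T}|I_P| \lesssim |I_T|\cdot(\text{number of scales})$ — here one must be slightly careful, the honest statement is $\sum_{P\in T}|I_P|\lesssim |I_T|$ does NOT hold, so instead one interpolates between $\sum_P|a_P|^2\lesssim \|f\|_2^2$ (Bessel) and the $L^1$-type bound obtained by testing against the maximal function, giving the geometric-mean shape $\|f\|_2^{1-s}(\cdots)^s$.

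The main obstacle I expect is step (2)--(3): making the almost-orthogonality across different scales genuinely summable. Unlike the Walsh/dyadic model where wave packets are perfectly localized and off-diagonal inner products vanish identically, here the Fourier wave packets only decay rapidly (estimate \eqref{e.fourierpacket}), so one must carefully exploit both the frequency separation from (S1)--(S3) and Remark~\ref{r.lattice} and the spatial separation from well-separatedness, organizing the sum by the ratio of scales and by spatial distance, and applying a Schur test with weights like $(|I_P|/|I_{P'}|)^{1/2}(1 + \dist(I_P,I_{P'})/|I_{P'}|)^{-N}$. The bookkeeping is the standard but technical heart of Fourier phase-plane analysis; the interpolation producing the $s$-dependent shape is then comparatively soft, relying only on the single-tree $L^2$ bound and Corollary~\ref{c.BMO}-type localization together with a $\log$-free version of the selection argument in \cite{ThieleHabilitation}.
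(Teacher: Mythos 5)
Your proposal and the paper's proof diverge in a way that leaves a genuine gap. The paper does not re-derive the almost-orthogonality/Bessel estimate at all: it takes the $s=1/3$ case of \eqref{e.separatedtrees} as known (from \cite{lacey-thiele-carleson}, \cite{oberlin-et-al}) and bootstraps. Concretely, it normalizes $\|f\|_2=1$, sets $B=\sup_P|a_P|/|I_P|^{1/2}$, stratifies $\P$ into levels $\P_k=\{P:2^{-k-1}B<|a_P|/|I_P|^{1/2}\le 2^{-k}B\}$, and applies the known $s=1/3$ bound (i) to the tail $\P_{\ge k}$ for $k$ past a threshold and (ii) to each single level $\P_k$ (viewed as well-separated one-bitile trees, where the $2^{-k}B$--comparability makes the $s=1/3$ estimate self-improve to an absolute constant). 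Summing over the $O(\log_2^+(B\|N\|_1^{1/2}))$ relevant levels gives $\sum_P|a_P|^2 \le C + C\log_2^+\!\bigl[B\|N\|_1^{1/2}\bigr]$, and then the trivial $\log^+x \le C_s x^{2s}$ gives \eqref{e.separatedtrees} for every $s\in(0,1]$. Your steps (1)--(2), which rebuild the almost-orthogonality via Schur test and frequency separation, are essentially a re-proof of the cited $s=1/3$ ingredient rather than a shortcut past it.

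The gap is in your step for the $s$-dependence. You correctly notice that the naive $L^1$-type bound $\sum_{P\in T}|I_P|\lesssim|I_T|$ fails (it costs a factor of tree depth), but you then appeal to ``interpolate between Bessel and an $L^1$-type bound'' and to a ``$\log$-free version of the selection argument.'' There is no $\log$-free version to appeal to: the selection argument genuinely produces a logarithm, and the whole point of the statement as formulated -- with arbitrary $s>0$ rather than $s=1/3$ -- is that a logarithm is harmless because $\log^+ x\le C_sx^{2s}$. Without the level-stratification idea that trades the $s=1/3$ power for a logarithm, your outline has no mechanism to reach arbitrarily small $s$: convexity interpolation of the Bessel bound $\sum|a_P|^2\lesssim\|f\|_2^2$ with any single estimate involving $B$ and $\sum_T|I_T|$ would pin $s$ to a specific value, not give the full range $(0,1]$. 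So as written the proposal does not close, even granting the almost-orthogonality bookkeeping in steps (1)--(2).
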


Remark: While any $0<s<1$ would be   enough for applications to the Lebesgue setting of Carleson theorems (see e.g. \cite{lacey-thiele-carleson} and \cite{oberlin-et-al} where $s=1/3$ is used), our applications to Claim~\ref{cl.M2bound} require arbitrarily small $s>0$. We include a proof of \eqref{e.separatedtrees} (following largely \cite{ThieleHabilitation}) below. 

\begin{proof} Without loss of generality we may assume $\|f\|_2=1$. Denote 
$$N=\sum_{T\in \T}1_{I_T} \qquad ,  \qquad a_P = \<f,\phi_{P_1}\>\,, $$
$$A=(\sum_{P\in \P}|a_P|^2)^{1/2} \ , \qquad   B=\sup_{P\in \P} \frac{|a_P|}{|I_P|^{1/2}} \ \  .$$
We then divide $\P$ into subcollections $\P_k$, where for any $k\ge 0$ we have
$$\P_k = \{P \in \P: 2^{-k-1}B< \frac{|a_P|}{|I_P|^{1/2}} \le 2^{-k} B\}, 
$$
and let $\P_{\ge k} = \bigcup_{j\ge k} \P_j$. Using the known special case $s=1/3$ of \eqref{e.separatedtrees} proved in \cite{lacey-thiele-carleson} (see also \cite{oberlin-et-al} for a setting similar to the current paper) for the restriction to $\P_{\ge k}$ of the tree collection $\T$,  we have
$$(\sum_{P\in \P_{\ge k}} |a_P|^2)^{1/2} \le C  + C (2^{-k}B)^{1/3}\|N\|_1^{1/6} $$
in particular for $k \ge    \max(0,\log_2 \big[  B (\sum_{T\in \T}|I_T|)^{1/2} \big])$ we have
\begin{equation}\label{e.largek}
(\sum_{P\in \P_{\ge k}} |a_P|^2)^{1/2} \le C \,.
\end{equation}
On the other hand, it follows from the definition of $\P_k$ that
\begin{equation}\label{e.pk1}
(\sum_{P\in \P_k} |a_P|^2)^{1/2} \sim 2^{-k}B (\sum_{P\in \P_k}|I_P|)^{1/2} \,.
\end{equation}
We can also view $\P_k$ as a collection of single-bitile trees, which is clearly well-separated. Thus again using the known case $s=1/3$ of \eqref{e.separatedtrees}, it follows that
\begin{equation}\label{e.pk2}
(\sum_{P\in \P_k} |a_P|^2)^{1/2} \le C + C \Big[2^{-k}B (\sum_{P\in \P_k}|I_P|)^{1/2}\Big]^{1/3}\,.
\end{equation}
Combining  \eqref{e.pk1} and \eqref{e.pk2}, it follows that for any $k\ge 0$ we have
\begin{equation}\label{e.smallk}
(\sum_{P\in \P_k} |a_P|^2)^{1/2} \le C\,.
\end{equation}
Using \eqref{e.largek}   and \eqref{e.smallk} we obtain
$$\sum_{P\in \P} |a_P|^2 \le C+ C\max(0,\log_2   \big[B \|N\|_1^{1/2} \big])\,.$$
Using the trivial estimate $\max(0,\log x) \le x$ for any $x>0$, we obtain
$$(\sum_{P\in \P} |a_P|^2)^{1/2} \le C (1+  \big[B\|N\|_1^{1/2} \big]^s) $$
for any $0<s\le 1$, as desired.
\end{proof}

\subsection{Decomposition by density}
Since $|g|\le 1_G$, the density of any collection is bounded above by $1$. For the result below, it is important that the constant $D$ in the definition of $density$ is sufficiently large, much bigger than the doubling exponent $\gamma$ of $w$. We  return to this point in the proof.

\begin{lemma}\label{l.density} For any collection $\P$ of bitiles and any $\alpha>0$ we can find a collection $\T$ of trees such that
the density of $\P - \bigcup_{T\in \T} T$ is bounded above by $\alpha$ and
$$\sum_{T\in \T} w(I_T) \le C\alpha^{-r'} w(G)$$
here $r$ is the variational exponent used in the definition of density.
\end{lemma}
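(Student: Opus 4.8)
The plan is to run the standard greedy tree-selection algorithm with respect to density, exactly parallel to the size decomposition in Lemma~\ref{l.size}, and then to convert the resulting estimate into the claimed $\ell^1$ bound on the top-interval weights. Concretely, while $\P$ (or what remains of it) has density exceeding $\alpha$, there is a nonempty tree $T$ with
$$\frac{1}{w(I_T)} \int \widetilde \chi_{I_T}^D |g|^{r'}\sum_{j: N_j \in \omega_T} |d_j|^{r'} w \ \ge\ \alpha^{r'}\,.$$
Among all such trees I would select one whose top interval $I_T$ is maximal (there are only finitely many scales and positions to consider at each stage since $\P$ is finite), pass to the maximal tree with that top data, remove it from $\P$, record it in $\T$, and repeat. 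When the process stops, $\P-\bigcup_{T\in\T}T$ has density at most $\alpha$ by construction, so the only thing to prove is the quantitative bound $\sum_{T\in\T} w(I_T) \le C\alpha^{-r'} w(G)$.

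The core of the argument is the observation that the selected trees are essentially disjoint in a way that lets us sum the local lower bounds. Maximality of the top intervals at the time of selection, combined with the tile-geometry separation conditions (S1)--(S3) and Remark~\ref{r.maximalsubtree}, should force the frequency intervals $\omega_T$ of the selected trees to be ``spatially sparse'' over each fixed spatial location: for the selected trees $T$ whose top intervals all contain a given point, the frequency intervals $\omega_T$ are disjoint (this is the usual phase-plane packing fact, as in the proof that $\T_2$ is well-separated in Lemma~\ref{l.size}). Hence for a fixed $x$, $\sum_{T\in\T} 1_{I_T}(x)\, 1_{\omega_T}(N_j(x))$ counts each index $j$ at most a bounded number of times, so
$$\sum_{T\in\T} 1_{I_T}(x) \sum_{j: N_j(x)\in\omega_T} |d_j(x)|^{r'} \ \le\ C \sum_{j=1}^{K(x)} |d_j(x)|^{r'} \ =\ C\,,$$
using the normalization from \eqref{e.d}. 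Then
$$\alpha^{r'} \sum_{T\in\T} w(I_T) \ \le\ \sum_{T\in\T} \int \widetilde\chi_{I_T}^D |g|^{r'} \sum_{j: N_j\in\omega_T} |d_j|^{r'} w\,.$$
To finish I would bound $\widetilde\chi_{I_T}^D$ pointwise: since $D$ is chosen much larger than the doubling exponent $\gamma$ of $w$, the tails $\widetilde\chi_{I_T}^D$ summed against $w$ over the selected trees can be controlled by a bounded-overlap / maximal-function argument, so that $\sum_{T\in\T}\widetilde\chi_{I_T}^D(x) 1_{\omega_T}(N_j(x))\le C$ holds even without the hard cutoff $1_{I_T}$. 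Combining with $|g|\le 1_G$ and interchanging the sum and integral yields $\sum_{T\in\T} w(I_T) \le C\alpha^{-r'}\int_G |g|^{r'} w \le C\alpha^{-r'} w(G)$.

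The step I expect to be the main obstacle is making the ``bounded overlap with the tails'' precise: because Fourier wave packets are not perfectly localized, one cannot simply use $1_{I_T}$ and must work with the Schwartz tails $\widetilde\chi_{I_T}^D$. The key point will be that, after stratifying the selected trees by the scale of $I_T$ and by the dyadic annulus in which $x$ sits relative to $I_T$, the disjointness of the $\omega_T$'s (from maximal selection plus (S1)--(S3)) still limits how many trees can contribute at each $(x, N_j(x))$, and the geometric decay $2^{-(D-\gamma)k}$ in the annulus index $k$ is summable precisely because $D$ was taken large relative to $\gamma$. This is the place where the choice of $D$ in the definition of density is used, and where the weighted setting genuinely differs from the Lebesgue case; everything else is a routine transcription of the classical density decomposition (cf.\ \cite{lacey-thiele-carleson}, \cite{oberlin-et-al}).
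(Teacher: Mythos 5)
Your overall scaffold—greedy selection by maximal top interval, disjointness of the selected tree rectangles $I_T\times\omega_T$, and then a reduction to the normalization $\sum_j|d_j|^{r'}=1$—is the right skeleton, but there are two genuine gaps, and the second is fatal as you have set it up.

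First, you remove only the maximal tree $T$ with top data $(I_T,\xi_T)$ at each step. The paper removes $T\cup T_+\cup T_-$, where $T_\pm$ are the maximal trees with the same top interval and top frequencies $\xi_T\mp\frac{1}{2|I_T|}$. This extra removal is precisely what forces the phase-plane disjointness of the selected rectangles: if a later $T'$ has $I_{T'}\subsetneq I_T$ and $\omega_{T'}\cap\omega_T\neq\emptyset$, then $\omega_{T'}$ (of length $\geq 2|\omega_T|$) must contain one of $\omega_{T_+},\omega_T,\omega_{T_-}$, so every bitile of $T'$ already sits in $T\cup T_+\cup T_-$ and is gone. Without removing $T_\pm$ one can cook up a $T'$ with $\omega_{T'}$ straddling one edge of $\omega_T$ whose bitiles survive, so the disjointness you are invoking ("for the selected trees $T$ whose top intervals all contain a given point, the $\omega_T$'s are disjoint") does not follow from your version of the algorithm.

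Second, and more seriously, the pointwise overlap bound you want, $\sum_{T\in\T}\widetilde\chi_{I_T}^D(x)\,1_{\omega_T}(N)\le C$, is simply false, even assuming full disjointness of the rectangles $I_T\times\omega_T$. Take $x=N=0$, and let $T_m$ ($m=1,\dots,M$) have $I_{T_m}=[2^m,2^{m+1})$ and $\xi_{T_m}=0$, so that $\omega_{T_m}\ni 0$. The spatial intervals are pairwise disjoint, so the rectangles are pairwise disjoint, yet $\widetilde\chi_{I_{T_m}}(0)=(1+9/4)^{-1/2}$ is a fixed constant independent of $m$, so the left-hand side is $\sim M$, which is unbounded. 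All these trees sit at the \emph{same} annulus index $k\sim 1$ relative to $x$, so the geometric decay in $k$ that you propose to use cannot rescue the estimate; the uncontrolled quantity is the number of scales, not the annulus index. The paper sidesteps this exactly where you expected the obstacle: it does not attempt any pointwise overlap bound. Instead, for each selected $T$ it pigeonholes an index $k(T)$ such that the density lower bound already holds on the single annulus $2^{k(T)}I_T$ with the weight $2^{-Dk(T)}$, then sorts the trees by $k(T)$, and within each class $\T_k$ runs a Vitali-type selection $\S_k$ so that the dilated rectangles $2^kI_S\times\omega_S$ ($S\in\S_k$) are pairwise disjoint and $\sum_{T\in\T_k}w(I_T)\lesssim\sum_{S\in\S_k}w(2^{k+2}I_S)$. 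Only then is the doubling $w(2^{k+2}I_S)\le 2^{(k+2)\gamma}w(I_S)$ traded against the pigeonholed factor $2^{-Dk}$, and $D>\gamma+10$ gives the summable $2^{-k}$. That covering step, not a pointwise overlap count, is the actual mechanism, and it is what your proposal is missing.
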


Remark: This is a weighted extension of  \cite{oberlin-et-al}*{Proposition 4.4}, and the proof below is adapted from \cite{oberlin-et-al}, which is in turn a variational adaptation of the standard argument. The variant of Lemma~\ref{l.density} with improved density follows immediately, since for any $\P$ we have $\widetilde{density}(\P) \le C\, density(\P)$.

\begin{proof} If $\density(\P) > \alpha$ then there is a nonempty tree $T\subset \P$ such that
\begin{equation}\label{e.largedensity}
\omega(I_T) \le \alpha^{-r'} \int \widetilde \chi_{I_T}^D|g|^{r'} \sum_{j: N_j\in \omega_T} |d_j|^{r'} w \,.
\end{equation}
We select $T$ such that $|I_T|$ is maximal, and then by enlarging $T$ (keeping $I_T$ and $\xi_T$) if necessary we may assume  that $T$ is maximal in $\P$ with respect to set inclusion. Let $T_+$ and $T_-$ be the maximal trees in $\P$ with the same top interval as $T$ but with top frequencies $\xi_T - \frac 1{2|I_T|}$ and $\xi_T+\frac 1{2|I_T|}$ respectively. We then remove from $\P$ the union of $T, T_+, T_-$. Continuing this selection process, which will stop since $\P$ is assumed finite, we obtain a collection $\T$ of trees, such that 
$$\density(\P - \bigcup_{T\in \T} (T \cup T_- \cup T_+))\le \alpha \,.$$
It remains to show that
$$\sum_{T\in \T}w(I_T) \le C \alpha^{-r'}w(G) \,. $$
By the selection algorithm, it is not hard to see that for $T \ne T'$ in $\T$ the rectangles $I_T \times \omega_T$ and $I_{T'} \times \omega_{T'}$ are disjoint. Now, it follows from \eqref{e.largedensity} that  for any $T\in \T$ there exists an integer $k =k(T)\ge 0$ such that
\begin{equation}\label{e.kdecay}
\omega(I_T) \le C 2^{-Dk} \alpha^{-r'} \int_{2^{k}I_T} |g|^{r'} \sum_{j: N_j \in \omega_T} |d_j|^{r'} w \,.
\end{equation}
We then sort the trees in $\T$ according to the value of $k(T)$. More specifically for each $k\ge 0$ let $\T_k = \{T\in \T: k(T) = k\}$. It suffices to show that
\begin{equation}\label{e.topintervalk}
\sum_{T\in \T_k} w(I_T) \le C \alpha^{-r'} 2^{-k} w(G) \,.
\end{equation}
Fix $k$.  Select a subcollection $\S_k \subset \T_k$ such that the rectangles $2^k I_S \times \omega_S$ with $S \in \S_k$ are pairwise disjoint, and such that
\begin{equation}\label{e.overlap}
\sum_{T\in \T_k} \omega(I_T) \le C\sum_{S\in \S_k} \omega(2^{k+2} I_S) \,.
\end{equation}
Note that this will imply the desired estimate \eqref{e.topintervalk}.   
By choosing $D>\gamma+10$, where $\gamma$ is the doubling exponent for $w$, it follows from \eqref{e.kdecay} and \eqref{e.overlap} that
$$\sum_{T\in \T_k} \omega(I_T) \le C 2^{k\gamma} \sum_{S\in \S_k} \omega(I_S)$$
$$\le C 2^{-k} \alpha^{-r'} \int \sum_{j} \sum_{S\in \S_k} 1_{\{(x,N_j(x))\in 2^k I_S \times \omega_S \}} |d_j|^{r'} |g|^{r'} w$$
$$\le  C 2^{-k} \alpha^{-r'} \int  |g|^{r'}\sum_{j} |d_j|^{r'} w \le C 2^{-k} w(G) \,.$$
It remains to select $\S_k$. Assuming without loss of generality that $\T_k$ is nonempty. Then we choose $S\in \T_k$ such that $|I_S|$ is maximal and then remove all $T\in \T$ if 
$$2^k I_T\times \omega_T  \cap 2^k I_S \times \omega_S \ne \emptyset \,.$$ 
Starting from the remaining collection, we repeat the above selection procedure  until no trees are left. We then let $\S_k$ be the collection of selected trees. For any $S\in \S_k$, let $\T_S$ denote the collection of trees in $\T$ that are removed after $S$ is selected, then to show \eqref{e.overlap}  it suffices to show that
\begin{equation}\label{e.countingoverlap}
\sum_{T \in \T_S} 1_{I_T} \le C 1_{2^{k+2}I_S} \,.
\end{equation}
Note that if $T\in \T_S$ then $|I_T|\le |I_S|$ and $2^k I_T\cap 2^k I_S \ne\emptyset$, so clearly $I_T\subset 2^{k+2} I_S$. Also $|\omega_T| \ge |\omega_S|$ and $\omega_T \cap \omega_S \ne \emptyset$, so out of any four trees in $\T_S$ at least two of them will have overlapping top  frequency intervals. The desired estimate \eqref{e.countingoverlap} then follows from the fact that the rectangles $I_T\times \omega_T$ (with $T\in \T_S$) are disjoint. 
\end{proof}

\section{The tree estimate}

In this section we prove several estimates for the restriction of the (model) Carleson operator to a tree. Lemma~\ref{l.tree-est} is applicable to any tree, while Lemma~\ref{l.improved-tree-est} improves the $L^1$ case of Lemma~\ref{l.tree-est} when the elements of the underlying tree are disjoint in the phase plane.

Recall that for any bitile collection $\Q$ we denote
$$C_{\Q} f (x) = \sum_{P\in \Q} \<f,\phi_{P_1}\> \phi_{P_1}(x) d_P(x)$$
with $d_P$ defined as follows: First, $(d_k)_{k\ge 1}$ and $N_k$ are two sequences of measurable functions of $x$, such that
\begin{itemize}
\item For each $x$ there is some integer $K=K(x) <\infty$ such that $d_k(x) = 0$ for $k>K$, and uniform over $x$ we have $\sum_{k\ge 0} |d_k(x)|^{r'} = 1$.
\item For any $x$ we have $N_0(x)<N_1(x)< \dots $.
\end{itemize}
Then for each $x$ define $d_P(x)=0$ unless  there exists an index $k$ such that $N_{k-1} \not\in  \omega_{P}$ and $N_k \in  \omega_{P_2}$, in which case such index is unique and we define $d_P(x):=d_k(x)$. We note that if $P \in \P$ then 
$$\int \widetilde \chi_{I_P}^D |g|^{r'} \sum_{k: N_k \in  \omega_{P_2}} |d_k|^{r'}w \le C w(I_P)\density(\{P\})^r  \,.$$
The above observation will be used implicitly below.
\begin{lemma}\label{l.tree-est}
Let $T$ be a tree.  Assume $s\in [1,r']$.  Then there exists some $C=C(s,w)<\infty$ such that
\begin{equation}\label{e.tree-est-core}
\|1_{I_T} gC_{T} f \|_{L^{s}(w)}   \le C  w(I_T)^{1/s} \size(T) \density(T)\,,
\end{equation}
and furthermore for any $N> 0$ there exists $C=C(N,s,w)<\infty$ such that the following inequality holds for any $k\ge 0$:
$$\|1_{2^{k+1} I_T\setminus 2^k I_T} gC_{T} f \|_{L^{s}(w)}  $$ 
\begin{equation}\label{e.tree-est-tail} 
\le C 2^{-Nk} w(I_T)^{1/s} \size(T) \density(T) \,.
\end{equation}
\end{lemma}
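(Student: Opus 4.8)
The plan is to split a tree $T$ into its $2$-overlapping part and its $2$-lacunary part and treat each separately, since any tree decomposes into $O(1)$ trees of each type. For the $2$-lacunary part the wave packets $\phi_{P_1}$ essentially behave like a lacunary Littlewood--Paley family, and one expects pointwise control of $|C_T f|$ in terms of a single square function $S_T f$ times a local average of $g$; for the $2$-overlapping part the $d_P$'s are the ones carrying the density information through the sum $\sum_{j: N_j \in \omega_T}|d_j|^{r'}$. I would first reduce matters to bounding $\|1_{I_T} g\, C_T f\|_{L^s(w)}$ and its tail pieces separately, because once \eqref{e.tree-est-core} and \eqref{e.tree-est-tail} are in hand with $I_T$ and the annuli $2^{k+1}I_T \setminus 2^k I_T$, summing is routine.

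For the core estimate the key step is a pointwise bound. In the $2$-lacunary case I would show $|C_T f(x)| \le C\, (S_T f)(x)\, M_{r',w}$-type average involving $g$ near $I_T$ — more precisely, because for each $x$ only one scale contributes at each jump index $j$ and the frequency intervals $\omega_{P_2}$ are disjoint, one can use the Fefferman-type argument: $|C_T f(x)|^2 \lesssim (S_T f(x))\cdot(\text{maximal function of the }d\text{'s against }\phi)$. In the $2$-overlapping case one uses that all $\phi_{P_1}$ have $\widehat{\phi_{P_1}}$ supported around $\xi_T$, so $\sum_P \langle f,\phi_{P_1}\rangle \phi_{P_1}(x)$ is modulated by $e^{2\pi i \xi_T x}$ times something controlled by $\|S_T f\|_{\mathrm{BMO}}$ via Corollary~\ref{c.BMO}, i.e.\ a single BMO function, and then Hölder with the $\ell^{r'}$ normalization $\sum_j |d_j|^{r'}=1$ plus the definition of $\density(T)$ against $|g|^{r'}$ closes it. The arithmetic is: apply Hölder in the spatial integral with exponents matched to $s\le r'$, use $\widetilde\chi_{I_T}^D$ to absorb the geometric weight, invoke \eqref{e.sharp<} to pass between $L^s(w)$ norms and sharp-maximal-function norms, and use the doubling property \eqref{e.doubling} to handle $w(I_T)$ versus $w$ of dilates.

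For the tail estimate \eqref{e.tree-est-tail}, the mechanism is the rapid decay of the Fourier wave packets: from \eqref{e.fourierpacket}, for $x \in 2^{k+1}I_T \setminus 2^k I_T$ each $\phi_{P_1}(x)$ is smaller than its size on $I_P$ by a factor $(1+ \mathrm{dist}(x,I_P)/|I_P|)^{-N'} \lesssim 2^{-N'k}$ (using $I_P \subset I_T$), and similarly $\widetilde\chi_{I_P}$ and $\widetilde\chi_{I_T}$ gain factors of $2^{-k}$; choosing $N'$ large relative to $N$ and to the doubling exponent $\gamma$ lets us write everything as $2^{-Nk}$ times the same quantity that appeared in the core estimate, at the cost of replacing $I_T$ by its dilates, which the weight handles by \eqref{e.doubling}. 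Concretely I would bound the integrand on the $k$-th annulus by $2^{-Nk}\,\widetilde\chi_{I_T}^{D}\,$(core integrand) and redo the Hölder/sharp-function argument verbatim.

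The main obstacle I anticipate is the $2$-overlapping / variational interplay: unlike the classical Carleson setting, the coefficients $d_P$ here encode a variation-norm structure, so one cannot simply pull out a single $d$; the correct move is to observe that for fixed $x$ the map $P \mapsto d_P(x)$ is supported on a set of bitiles whose frequency intervals $\omega_{P_2}$ contain distinct $N_j(x)$, so $\sum_{P \in T}|\langle f,\phi_{P_1}\rangle \phi_{P_1}(x) d_P(x)| \le (\text{single BMO-controlled function at }x)\cdot \bigl(\sum_{j: N_j \in \omega_T}|d_j(x)|^{r'}\bigr)^{1/r'}\cdot(\#\text{scales})^{1/r}$, and then carefully track that the number of relevant scales is harmless after raising to the $s$-th power and integrating against $|g|^{r'}w$ with $D$ large. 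Getting the exponents to line up so the final bound is exactly $w(I_T)^{1/s}\size(T)\density(T)$ rather than something with an extra power of $\size$ or a logarithm is where I expect to spend the most care, and it is precisely where the lacunary/overlapping split and the definition of density (with its $r'$ powers and $\widetilde\chi^D$ weight) were engineered to make things work.
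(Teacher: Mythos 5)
There is a genuine gap, and also a concrete error in the setup. Your dichotomy between the $2$-overlapping and $2$-lacunary cases is backwards. By Remark~\ref{r.treelacunary}, for a \emph{$2$-overlapping} tree the frequency supports $C_3\omega_{P_1}$ of the wave packets $\phi_{P_1}$ are pairwise separated across scales (a Littlewood--Paley structure), while for a \emph{$2$-lacunary} tree it is the $\omega_{P_2}$'s that are lacunary and the $\phi_{P_1}$'s stay clustered near $\xi_T$. You assert exactly the opposite (``for the $2$-lacunary part the wave packets $\phi_{P_1}$ essentially behave like a lacunary Littlewood--Paley family'' and ``in the $2$-overlapping case one uses that all $\widehat{\phi_{P_1}}$ are supported around $\xi_T$''). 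Consequently your proposed pointwise BMO-bound for the $2$-overlapping case and your ``Fefferman-type'' pointwise bound for the $2$-lacunary case are each applied to the wrong half of the tree, and neither actually works.

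More importantly, your proposal never touches the real mechanism needed for the $2$-overlapping part, which is precisely where the variational structure bites: there one must control a genuine $r$-variation of lacunary Littlewood--Paley pieces $\sum_P \langle f,\phi_{P_1}\rangle \phi_{P_1}$ over the jump points $N_j(x)$, and the paper does this by introducing smooth projections $\Pi_n$, reducing to a local maximal function $M_J$ applied to a variational quantity $\|g_T\|_{V^r}$, and then invoking the weighted L\'epingle inequality (Lemma~\ref{l.weightLepingle}) in $L^{2q}(w)$, followed by a duality step in $L^{2q}(w)$--$L^{(2q)'}(\sigma)$ to bound $\|g_T\|_{L^{2q}(w)}$ by $w(I_T)^{1/(2q)}\size(T)$. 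This weighted L\'epingle ingredient is the crux of the tree estimate and is absent from your argument; ``Hölder with the $\ell^{r'}$ normalization'' by itself cannot produce the factor $\size(T)\density(T)$ because the jump indices $j$ couple $f$-side and $g$-side data across scales, and one must decouple them variationally. Similarly, your plan does not include the spatial decomposition into the maximal dyadic intervals $\mathcal{J}$ (with $I_P\not\subset 3J$), nor the split $T = T_J \cup (T\setminus T_J)$ and the auxiliary density estimate \eqref{e.densityJlevel}; without these the density does not localize correctly and one cannot reduce to $s=r'$ and then sum over $J$. The tail estimate sketch is directionally fine, but it relies on the core estimate machinery that is not yet in place.
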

Remark: As a consequence, we obtain for any $s\in [1,r']$:
\begin{equation}\label{e.tree-est}
\|gC_{T} f \|_{L^{s}(w)}   \le C  w(I_T)^{1/s} \size(T) \density(T) \,.
\end{equation}
\begin{proof} By H\"older's inequality and using the doubling property of $w$ it suffices to show \eqref{e.tree-est-core} and \eqref{e.tree-est-tail}  for $s=r'$, and this will be assumed in the rest of the proof. By dividing $ T$ into two subtrees,  if necessary,   we can assume that the tree is either $2$-overlapping or $2$-lacunary. We will return to this distinction below. \\

\noindent \underline{Proof  of \eqref{e.tree-est-core}:} We will prove a stronger estimate, where the restriction $1_{I_T}$ is not required. Let $\J$ be the set of maximal dyadic intervals such that
$$I_P \not\subset 3J$$
for any  $P\in T$. It is not hard to see that $\J$ partitions $\R$. Let
\begin{equation}\label{e.defTJ}
T_J := \{P\in T: |I_P|\le C_4 |J|\}\,,
\end{equation}
some absolute constant $C_4\ge 4$ to be chosen later.  The left hand side of \eqref{e.tree-est-core} (with $s=r'$ now) is bounded above by $A+B$ where
\begin{eqnarray}
\label{e.Jerr}  A&:=& \Big(\sum_{J \in \J} \int_J |gC_{T_J}f |^{r'} w \Big)^{1/r'}\\
\label{e.Jmain} B&:=& \Big(\sum_{J \in \J} \int_J |g C_{T\setminus T_J}f) |^{r'} w \Big)^{1/r'} \,.
\end{eqnarray}
To bound $A$, we fix $J\in \J$ and first estimate the contribution of each $P\in T_J$:
$$\Big(\int_J |g C_{\{P\}}f|^{r'} w\Big)^{1/r'} \le C_{N} \frac{|\<f,\phi_{P_1}\>|}{|I_P|^{1/2}} (\int |1_J g \widetilde \chi_{I_P}^{N+D} d_P|^{r'} w)^{1/r'}$$
$$\le C w(I_P)^{1/r'} \size(\{P\}) \density(\{P\}) \sup_{y\in J} \widetilde \chi_{I_P}(y)^N \,.$$
Using the triangle inequality, it follows that
$$\Big(\int_J |g C_{T_J}f|^{r'} w\Big)^{1/r'}$$ 
\begin{equation}\label{e.contributionJ}
 \le C \size(T) \density(T)  \sum_{P\in T_J} w(I_P)^{1/r'}  (1+\frac{\dist(J,I_P)}{|I_P|})^{-4N} \,.
\end{equation}
By the $A_\infty$ property of $w$ there exists constants $\beta_0>0$ such that if $I\subset I'$ are two intervals then
$$\frac{w(I)}{w(I')} \le C(\frac{|I|}{|I'|})^{\beta_0} \,.$$
Without loss of generality, we may choose the doubling constant $\gamma$ in \eqref{e.doubling} to be large enough such that $\gamma > \beta_0$.
 
For any $P\in T_J$ we can find an interval $K$ of length comparable to $|I_P| + |J| + \dist(J,I_P)$ that contains both $I_P$ and $J$. Since $|I_P| =O(|J|)$ we can choose $K$ to be a dilation of $J$. We then have
$$\frac{w(I_P)}{w(J)} = \frac{w(I_P)}{w(K)} \frac {w(K)}{w(J)} \le C (\frac{|I_P|}{|K|})^{\beta_0} (\frac{|K|}{|J|})^{\gamma}$$
$$= C (\frac{|I_P|}{|J|})^{\beta_0} (\frac{|K|}{|J|})^{\gamma-\beta_0} \le C(\frac{|I_P|}{|J|})^{\beta_0} (1+ \frac{\dist(J,I_P)}{|J|})^{\gamma-\beta_0}$$
$$\le C(\frac{|I_P|}{|J|})^{\beta_0} (1+ \frac{\dist(J,I_P)}{|I_P|})^{\gamma-\beta_0} \,.$$
Therefore by choosing $N$ sufficiently large it follows from \eqref{e.contributionJ} that
$$\Big(\int_J |g C_{T_J}f|^{r'} w\Big)^{1/r'}$$ 
$$\le C \size(T) \density(T)  \sum_{P\in T_J} (\frac{|I_P|}{|J|})^{\beta_0/r'} w(J)^{1/r'}  (1+\frac{\dist(J,I_P)}{|I_P|})^{-3N}$$
$$= C \size(T) \density(T)    w(J)^{1/r'} \sum_{k\ge -1} \sum_{|I_P|=2^{-k}|J|} 2^{-k\beta_0/r'}(1+\frac{\dist(J,I_P)}{|I_P|})^{-3N} \ .$$
Using the fact that $3J$ does not contain any $I_P$, $P\in T_J$, and the fact that elements of $T_J$ of the same size are spatially disjoint, it is not hard to bound the last display by
$$\le C\size(T) \density(T)    w(J)^{1/r'} (1+\frac{\dist(J,I_T)}{|I_T|})^{-2N}$$

Thus, we can bound $A$ by
$$A \le C \size(T) \density(T) \Big(\sum_{J\in \J} w(J)(1+\frac{\dist(J,I_T)}{|I_T|})^{-2Nr'}\Big)^{1/r'}\,.$$
Note that by definition $3J$ does not contain $I_T$. It follows that for any $x\in J$
$$1+\frac{\dist(J,I_T)}{|I_T|} \sim  1+\frac{|x-c(I_T)|}{|I_T|} \,.$$
Choosing $N$ large and using disjointness of $J$'s, we obtain
$$\sum_{J\in \J} w(J)(1+\frac{\dist(J,I_T)}{|I_T|})^{-2Nr'} \le C\int \widetilde\chi_{I_T}^{N} w \le Cw(I_T) \,.$$
Consequently, we have
$$A \le C w(I_T)^{1/r'} \size(T)\density(T) \,.$$

To bound $B$,  let $F_J = \bigcup_{T\in T\setminus T_J} \omega_{P_2}$, we first show that
\begin{equation}\label{e.densityJlevel}
\int_J |g|^{r'} \sum_{j: N_j\in F_J}|d_j|^{r'}w \le C w(J) [\density(T)]^{r'} \,.
\end{equation}
\begin{proof}[Proof of \eqref{e.densityJlevel}] We  construct $O(1)$ non-empty subtrees of $T$ such that $F_J$ is contained inside the union of the frequency intervals of these trees. The top interval of each such subtree will be of length $\sim |J|$ and will be contained in some $O(1)$ dilation of $J$.  Clearly, \eqref{e.densityJlevel} follows as a consequence of this construction.

To construct these trees, first we construct their (common) top interval $J_0$. Let $\pi(J)$ be the dyadic parent of $J$. Then we can find $Q\in T$ such that $I_{Q}\subset 3\pi(J)$, therefore we can select a dyadic interval $J_0$ such that
$$I_{Q} \subset J_0 \subset 3\pi(J)\,,  \ \  |J_0|\ge |J| \,.$$
Now, note that by dividing $T$ into three trees if necessary, we may assume without loss of generality that only one of the following scenarios happens:
\begin{itemize}
\item[(i)] $\xi_T \in \omega_{P_2}$  for {every} $P\in T$, or
\item[(ii)] $\xi_T < \inf \omega_{P_2}$  for {every} $P\in T$, or
\item[(iii)]  $\xi_T \ge \sup \omega_{P_2}$  for {every} $P\in T$.
\end{itemize}
In each of these scenarios, one tree will be constructed.  The desired tree has only one element $Q$ and has top data $(J_0, \omega_0)$, and $\omega_0$ is constructed below: it will be shown that
\begin{equation}\label{e.frequencyinclusion}
F_J \subset \omega_0 \subset \widetilde \omega_Q \,.
\end{equation}
We note that by choosing $C_4$ large in the definition \eqref{e.defTJ} we can ensure that for any $P\in T\setminus T_J$ we have $|\omega_P| < 1/|J_0|$. Furthermore, if $C_2>1$ we can also ensure that $|\omega_P|<\frac{C_2-1}{2}|J_0|$.

If (i) is satisfied, we let $\omega_0$ be the dyadic interval of length $1/|J_0|$ containing $\xi_T$. It is clear that  for any $P\in T\setminus T_J$ we have $\omega_{P_2}\subset \omega_0$ and $\omega_0\subset \omega_{Q_2}$, and \eqref{e.frequencyinclusion} follows immediately. 

If (ii) is satisfied, we let $\omega_0=[\xi_T, \xi_T + \frac 1{|J_0|})$. Since for any $P\in T\setminus T_J$ we have $|\omega_P|<|\omega_0|<|\omega_{Q_2}|$, it follows that we always have $\omega_{P_2}\subset \omega_0 \subset \widetilde \omega_Q$, as desired.

If (iii) is satisfied, we let $\omega_0=[\xi_T -\frac 1{|J_0|}, \xi_T)$, and argued as in situation (ii).

This completes the proof of \eqref{e.densityJlevel}.  
\end{proof}

Below we return to our task of estimating $B$. We remark that any $J\in \J$ that contributes to $B$ must satisfies $|J|<|I_T|/C_4\le |I_T|/4$, therefore  $J\subset 3I_T$. We now consider two cases:\\

\noindent \underline{Case 1: $T$ is $2$-lacunary:} By ensuring that the constant $K_0$ in the separation assumption (S\ref{i.S3}) is sufficiently large,  it follows that for $P,P'\in T$ with $|I_P|>|I_{P'}|$ we have $\omega_{P_2}\subset \omega_{P'}$. Using the fact that  $\{N_j(x)\}$ is an increasing sequence for every $x$, it follows from a geometrical consideration that for each $x$ there is at most one $m$ and such that $d_P(x)\ne 0$ for some $P\in T$ with $|I_P|=2^m$. Here it is important that the limiting condition reads $\{N_{j-1}\not\in \omega_P, N_j\in  \omega_{P_2}\}$.  Now, uniformly over $m$ we have
$$\sum_{P \in T: |I_P|=2^m} (1+\frac{|x-c(I_P)|}{|I_P|})^{-2} = O(1) \,.$$
It then follows from \eqref{e.densityJlevel} that
$$\|1_J g C_{T\setminus T_J}f\|_{L^{r'}(w)} \le C\sup_{P\in T}\frac{|\<f,\phi_{P_1}\>|}{|I_P|^{1/2}} (\int_J |g|^{r'} \sup_{k: N_k\in \omega_{T_0}}|d_j|^{r'}w)^{1/r'}$$
$$\le C\size(T)\density(T)w(J)^{1/r'}.$$
Consequently we obtain the desired estimate:
\begin{align*}
B \le C \Big(\sum_{J\in \J} w(J)\Big)^{1/r'} \size(T)\density(T) 
\\&\le Cw(I_T)^{1/r'} \size(T)\density(T) \,.
\end{align*}

\noindent \underline{Case 2: $T$ is $2$-overlapping:} We  estimate pointwise
$$|C_{T\setminus T_J} f(x)|$$ 
$$\le \Big(\sum_{j} |\sum_{P\in T\setminus T_J: N_{j-1}\notin \omega_P, N_j \in  \omega_{P_2}} \<f,\phi_{P_1}\>\phi_{P_1}|^r\Big)^{1/r}\Big(\sum_{j: N_j \in \omega_{T_0}} |d_k(x)|^{r'}\Big)^{1/r'}$$
Therefore
\begin{equation}\label{e.collectfreq}
\|1_J g C_{T\setminus T_J}f\|_{L^{r'}(w)}
\end{equation}
$$\le Cw(J)^{1/r'} \density(T) \sup_{x\in J}\Big(\sum_{j} |\sum_{P\in T\setminus T_J: N_{j-1}\notin  \omega_P, N_j \in  \omega_{P_2}} \<f,\phi_{P_1}\>\phi_{P_1}|^r\Big)^{1/r} \,.$$
Note that for any $P$ the frequency support of $\phi_{P_1}$ is contained inside $C_3\omega_{P_1}=(1-c) C_2\omega_{P_1}$ for  $c=1-\frac {C_3}{C_2}\in (0,1)$ which is uniform over $P$'s. Recall that $T$ is a $2$-overlapping tree and  the relative position of the tiles in each bitile are uniform over $\P$. 

Now, by choosing the constant $K$ in the separation assumption (S\ref{i.S3}) to be sufficiently large, we can find a lacunary family of smooth Littlewood-Paley projection operators $\Pi_n$ such that: $\Pi_n$ is a smooth Fourier multiplier operator whose symbol is supported in  $\{|\xi| =O(2^n)\}$, and furthermore (thanks to separation) $\Pi_n \Pi_k = \Pi_k$ for any $n < k$ and $\phi_{P_1} = (\Pi_n - \Pi_{n-1})\phi_{P_1}$ for $n=\log_2 |I_P|$. 

It follows that for any $x\in J$ we can bound
$$\Big(\sum_{k} |\sum_{P\in T\setminus T_J: N_{k-1}\notin  \omega_P, N_k \in  \omega_{P_2}} \<f,\phi_{P_1}\>\phi_{P_1}|^r\Big)^{1/r}$$
$$\le \sup_{K, n_0< \dots < n_K < O(1)-\log_2 |J|} (\sum_{j=1}^K |(\Pi_{n_j} -\Pi_{n_{j-1}})g_T|^{r})^{1/r}\,.$$
where $g_T:=\sum_{P\in T} \<f,\phi_{P_1}\>\phi_{P_1}$. The last display can be rewritten as
$$=\sup_{K, n_0< \dots < n_K< O(1) - \log_2 |J|} (\sum_{j=1}^K |\Pi_{\log_2|J|}(\Pi_{n_j} -\Pi_{n_{j-1}}) g_T |^{r})^{1/r}$$
$$\le M_J (\sup_{K,n_0< \dots < n_K} (\sum_{j=1}^K |\Pi_{n_j} g_T-\Pi_{n_{j-1}}g_T|^{r})^{1/r})\,,$$
using Minkowski's inequality and standard arguments. Here, $M_J$ denotes the following local maximal operator:
$$M_J f =\sup_{I: J\subset I} \frac 1 {|I|}\int_{I} |f| \,.$$
For simplicity we denote by $\|g_T\|_{V^r}$ the variational expression inside $M_J$ in the above estimate. Recall that all the $J$ such that $T\setminus T_J$ are disjoint and contained in $3I_T$. Thus, it follows from \eqref{e.collectfreq} and the above estimate that
$$B \le C\density(T)\Big(\sum_{J\in \J} w(J) M_J (\|g_T\|_{V^r})^{r'}\Big)^{1/r'}$$
$$\le C\density(T) \|1_{3I_T}M(\|g_T\|_{V^r})\|_{L^{r'}(w)}$$
$$\le C\density(T) w(I_T)^{1/r' - 1/(2q)} \|M(\|g_T\|_{V^r})\|_{L^{2q}(w)}\,,$$
since $r'<2<2q$. Using $w\in A_q\subset A_{2q}$ and Lemma~\ref{l.weightLepingle} we obtain
$$B\le C\density(T) w(I_T)^{1/r' - 1/(2q)} \|g_T\|_{L^{2q}(w)}\,.$$
To show the desired bound for $B$ it remains to show that
$$\|g_T\|_{L^{2q}(w)} \le C w(I_T)^{1/(2q)} \size(T) \,.$$
Take $h$ to be any function in $L^{(2q)'}(w)$ where $(2q)'$ denote the dual exponent of $2q$. Let $\sigma=w^{-\frac{(2q)'}{2q}}$, since $w\in A_q\subset A_{2q}$ it is clear that $\sigma \in A_{(2q)'}$. We have
$$\<g_T, w h\>= \sum_{P\in T} \<f,\phi_{P_1}\> \<hw,\phi_{P_1}\>$$
$$\le \int (\sum_{P\in T} |\<f,\phi_{P_1}\>|^2 \frac{1_{I_P}}{|I_P|})^{1/2} (\sum_{P\in T} |\<hw,\phi_{P_1}\>|^2 \frac{1_{I_P}}{|I_P|})^{1/2}dx$$
$$\le \|S_T f\|_{L^{2q}(w)} \|S_T(hw)\|_{L^{(2q)'}(\sigma)} \,.$$
Then using the John-Nirenberg characterization of size in Lemma~\ref{l.BMO} and the estimate \eqref{e.M1bound}, it is not hard to see that
$$\<g_T, w h\> \le C w(I_T)^{1/(2q)} \size(T) \|hw\|_{L^{(2q)'}(\sigma)}$$
$$= Cw(I_T)^{1/(2q)} \size(T) \|h\|_{L^{(2q)'}(w)}\,,$$
as desired.\\

\noindent \underline{Proof of \eqref{e.tree-est-tail}:} Let $\widetilde g = g1_{2^{k+1}I_T\setminus 2^k I_T}$. Note that it suffices to consider $k\ge 2$. One proceeds as in the above proof of \eqref{e.tree-est-core} with $\widetilde g$ in place of $g$. It suffices to observe that in the above proof of \eqref{e.tree-est-core} we don't need to consider \eqref{e.Jmain} for $k\ge 2$ since all the $J$ that contributes to this term is contained inside $3I_T$. Furthermore, any $J$ that contributes to \eqref{e.Jerr} satisfies
$$\frac{\dist(J, I_T)} {|I_T|} \ge C 2^k\,,$$
therefore in the rest of the proof one could easily introduce a decaying factor.
\end{proof}

\begin{lemma}\label{l.improved-tree-est}
Let $T$ be a tree and suppose that any two bitiles of $T$ are disjoint.  Then there exists some $C=C(w)<\infty$ such that
\begin{equation}\label{e.improved-tree-est}
\|gC_{T} f \|_{L^1(w)}   \le C  w(I_T) \size(T) \widetilde{\density}(T) \,.
\end{equation}
\end{lemma}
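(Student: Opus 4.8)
The plan is to exploit that, when the bitiles of $T$ are pairwise disjoint, the spatial intervals $\{I_P:P\in T\}$ are automatically pairwise disjoint. Indeed every $P\in T$ satisfies $\omega_T\subset\widetilde\omega_P\subset\omega_P$, so any two of the frequency intervals $\omega_P,\omega_{P'}$ meet (both contain $\omega_T$); hence for the bitiles $I_P\times\omega_P$ and $I_{P'}\times\omega_{P'}$ to be disjoint in the phase plane we must have $I_P\cap I_{P'}=\emptyset$. Since also $I_P\subset I_T$, this gives $\sum_{P\in T}w(I_P)\le w(I_T)$, and this inequality is the sole place the disjointness hypothesis is used --- it is exactly what upgrades the $L^1$ case of Lemma~\ref{l.tree-est} from $\density(T)$ to $\widetilde\density(T)$.

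With the $I_P$ disjoint, none of the oscillatory arguments of Lemma~\ref{l.tree-est} are needed: the triangle inequality suffices. I would first bound pointwise
$$|C_T f(x)|\le\sum_{P\in T}|\langle f,\phi_{P_1}\rangle|\,|\phi_{P_1}(x)|\,|d_P(x)|\,.$$
For each $P$, the singleton $\{P\}$ is a $2$-overlapping tree with top interval $I_P$ (choose a top frequency inside $C_2\omega_{P_2}$, which is possible since $|C_2\omega_{P_2}|\ge|I_P|^{-1}$ and $C_2\omega_{P_2}\subset\widetilde\omega_P$), so $|\langle f,\phi_{P_1}\rangle|\le|I_P|^{1/2}\size(T)$; the wave packet estimate \eqref{e.fourierpacket} gives $|\phi_{P_1}(x)|\le C_N|I_P|^{-1/2}\widetilde\chi_{I_P}(x)^N$ for every $N$; and since $d_P(x)$ either vanishes or equals some $d_k(x)$ with $N_k(x)\in\omega_{P_2}$, one has $|d_P(x)|\le h_P(x)^{1/r'}$ with $h_P(x):=\sum_{j:\,N_j(x)\in\omega_{P_2}}|d_j(x)|^{r'}$. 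Thus $|C_T f(x)|\le C\,\size(T)\sum_{P\in T}\widetilde\chi_{I_P}(x)^N h_P(x)^{1/r'}$.

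It then remains to show $\int|g|\,\widetilde\chi_{I_P}^N h_P^{1/r'}w\le C\,w(I_P)\widetilde\density(T)$ for each $P$, after which summing over $P\in T$ and using $\sum_{P\in T}w(I_P)\le w(I_T)$ finishes the proof. I would split the integrand as $\bigl(|g|^{r'}h_P\widetilde\chi_{I_P}^D w\bigr)^{1/r'}\cdot\bigl(\widetilde\chi_{I_P}^{(N-D/r')r}w\bigr)^{1/r}$ (using $(1-1/r')r=1$) and apply H\"older with exponents $r',r$. The $L^{r'}$-norm of the first factor is exactly $w(I_P)^{1/r'}\widetilde\density(\{P\})\le w(I_P)^{1/r'}\widetilde\density(T)$ by the definition of improved density, while the $L^r$-norm of the second is $\le C\,w(I_P)^{1/r}$ provided $N$ is chosen large enough that $(N-D/r')r$ exceeds the doubling exponent $\gamma$ of $w$; for such $N$, $\int\widetilde\chi_{I_P}^{(N-D/r')r}w\le C\,w(I_P)$ by summing \eqref{e.doubling} over the dyadic annuli around $I_P$.

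No genuinely hard step is involved: disjointness reduces the tree estimate to a sum of single-bitile estimates. The points needing a little care are fixing $N$ large relative to $D$ and $\gamma$ so the tail integral converges, the exponent bookkeeping in the H\"older splitting, and checking that a singleton bitile really is a $2$-overlapping tree so that $\size(T)$ controls $|\langle f,\phi_{P_1}\rangle|\,|I_P|^{-1/2}$.
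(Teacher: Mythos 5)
Your proposal is correct and matches the paper's approach: the paper likewise observes that pairwise disjoint bitiles in a tree force the spatial intervals $I_P$ to be pairwise disjoint, reduces by the triangle inequality to the single-bitile case, and appeals to the (immediate) single-bitile estimate. You have merely spelled out the H\"older computation for the singleton case, which the paper leaves as "clear."
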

\begin{proof} Clearly the elements of $T$ must be spatially disjoint using the separation assumption on $\P$ and the fact that $T$ is a tree. Thus, by the triangle inequality it suffices to show \eqref{e.improved-tree-est} for any single-element tree, but the improved $L^1$ tree estimate is clear for these trees.
\end{proof}

\section{Weighted variational inequalities for Littlewood--Paley families}
In this section, we prove weighted extensions of a L\'epingle inequality, namely a 
variational inequality for for Littlewood--Paley families \cites{lepingle, bourgain, jones-seeger-wright, pisier-xu}.  Note that the dyadic variant of Lemma~\ref{l.weightLepingle} below was proved in \cite{weightWalsh}.

\begin{definition}\label{l.littlewoodpaley}
Fix an absolute constants $C \in (1,\infty)$, and $ \{C_N \;:\; N\in \mathbb N \}$, $ m\ge 1$.  A sequence of functions  $(f_{j})_{j\in \Z}$  
is a \emph{Littlewood--Paley family}  each each $f_{j}$ has frequency support inside $\{\frac 1 C 2^{-j} < |\xi| < C 2^{-j}\}$, and
\begin{equation*}
\lvert  \frac {d ^{N}}  {dx ^{N}} f_I (x) \rvert  \le C_N  2 ^{-j  N} [1 + \lvert  x\rvert 2 ^{-j}] ^{-m}  
\end{equation*}
\end{definition}
 
\begin{lemma}\label{l.weightLepingle} Let $1<p<\infty$, $w\in A_p$ and $r\ne 2$. Let $s=\min(r,2)$. Then for any Littlewood--Paley family $(f_ j )$ we have
\begin{equation}\label{e.rnot2}
\|\sup_{K, N_0< \dots<N_K}  (\sum_{k=1}^K |\sum_{N_{k-1}< j \le N_k} f_j|^r )^{1/r}\|_{L^p(w)} \le C \|(\sum_{j} |f_j|^{s})^{1/s}\|_{L^p(w)} \,.
\end{equation}
\end{lemma}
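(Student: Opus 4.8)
The plan is to reduce the weighted vector-valued variational inequality \eqref{e.rnot2} to two ingredients: the short-variation (``Jones--Seeger--Wright'') estimate for smooth Littlewood--Paley pieces, and L\'epingle's classical variational inequality for martingale-type (or lacunary) averages, each in its weighted form. First I would split the variation into a \emph{long} part, where the jump times $N_{k-1}<j\le N_k$ are grouped into dyadic blocks $[2^\ell,2^{\ell+1})$, and a \emph{short} part, where $j$ ranges inside a single such block. Precisely, write $F_\ell=\sum_{2^\ell\le j<2^{\ell+1}} f_j$; then by the triangle inequality in $V^r$ the left side of \eqref{e.rnot2} is dominated by the $r$-variation of the sequence of partial sums of $(F_\ell)_\ell$ (the long part) plus $\bigl(\sum_\ell \sup_{K,(N_i)}\sum_k|\sum_{N_{k-1}<j\le N_k,\ j\in[2^\ell,2^{\ell+1})} f_j|^r\bigr)^{1/r}$ (the short part, already in $\ell^r$ form because distinct blocks are disjoint).

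For the \textbf{long part}, the key point is that the partial sums $G_M:=\sum_{\ell\le M}F_\ell$ are exactly a lacunary sequence of Littlewood--Paley projections of $g:=\sum_j f_j$ (up to harmless smooth truncation), so the $r$-variation of $(G_M)$ is controlled by a weighted L\'epingle/Rubio-de-Francia-type square-function bound. Here I would invoke the weighted L\'epingle inequality for lacunary Littlewood--Paley families --- this is the ``classical'' part, available for all $A_p$ weights and $r>2$ via the Bourgain/Jones--Seeger--Wright machinery (jump inequalities) combined with weighted Littlewood--Paley theory --- to get $\|V^r(G_M)\|_{L^p(w)}\le C\|S g\|_{L^p(w)}$ where $Sg=(\sum_j|f_j|^2)^{1/2}$; since $s=\min(r,2)\le 2$ one has $Sg\le(\sum_j|f_j|^s)^{1/s}$ pointwise, giving the claimed right-hand side. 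The case $r<2$ (so $s=r$) is actually easier: one trivially bounds the $r$-variation by the $\ell^r$ sum $(\sum_j|f_j|^r)^{1/r}$, so the substance is entirely in $r>2$.

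For the \textbf{short part}, within a fixed block $[2^\ell,2^{\ell+1})$ all frequencies are comparable to $2^{-\ell}$, and the relevant operator is a ``short variation'' operator acting on $O(1)$ many frequency-localized pieces; after rescaling by $2^\ell$ this is uniformly (in $\ell$) bounded on $L^p(w)$ for $A_p$ weights $w$ --- one can view it as a weighted variational estimate for a short martingale, or simply dominate the $r$-variation over a bounded number of indices by the corresponding $\ell^r$-sum $(\sum_{j\in[2^\ell,2^{\ell+1})}|f_j|^r)^{1/r}\le(\sum_{j\in[2^\ell,2^{\ell+1})}|f_j|^s)^{1/s}$ up to the block length, which is $O(1)$. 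Summing the $p$-th powers over $\ell$ and using that the blocks are disjoint in $j$ recovers $\|(\sum_j|f_j|^s)^{1/s}\|_{L^p(w)}$. Combining the long and short estimates finishes the proof.

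The main obstacle I anticipate is establishing the \emph{weighted} L\'epingle inequality for the long part with the correct right-hand side: one needs a weighted version of the jump/variation inequality of Jones--Seeger--Wright for lacunary Littlewood--Paley projections valid for all $r>2$ and all $A_p$ weights, and one must be careful that the smooth truncations implicit in passing from $\sum_j f_j$ to genuine Littlewood--Paley projections of a single function do not destroy the frequency localization; this is presumably where ``the dyadic variant was proved in \cite{weightWalsh}'' is used as a template, with the extra Fourier-side technicalities handled by the usual Schwartz-tail/bump-function arguments. A secondary, purely bookkeeping, obstacle is handling the $\sup$ over partition points $N_0<\dots<N_K$ so that the splitting into long and short parts is done uniformly; this is the standard observation that any interval $(N_{k-1},N_k]$ meets at most two ``boundary'' dyadic blocks plus a union of whole blocks, so the triangle inequality applies with $O(1)$ loss.
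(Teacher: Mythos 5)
Your proposal has a genuine gap, and I think the root cause is a misreading of the structure of the family $(f_j)$. In Definition~5.1 the index $j$ \emph{already} labels lacunary scales: $f_j$ has frequency support in $\{|\xi|\sim 2^{-j}\}$, so the partial sums $\sum_{j\le N}f_j$ are already a lacunary Littlewood--Paley family and the whole variation is, in the Jones--Seeger--Wright terminology, the ``long'' variation. Your blocks $[2^\ell,2^{\ell+1})$ of the index $j$ therefore have \emph{cardinality} $2^\ell$, not $O(1)$, and your claim that ``the $r$-variation over a bounded number of indices'' can be dominated by the $\ell^r$-sum of the $|f_j|$ up to a constant is simply false: the within-block variation of the partial sums incurs a factor $2^{\ell(1-1/r)}$ by the crude triangle-inequality bound, and there is no extra smoothness-in-$j$ to exploit (adjacent $f_j$ and $f_{j+1}$ have disjoint, lacunarily separated frequency supports, unlike averages $A_t$ that vary continuously in $t$). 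So the short part is not a harmless error term — it is exactly as hard as the original estimate.

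Worse, the long part is circular. After your (ill-posed) split, what you need for the long part is precisely ``a weighted L\'epingle inequality for lacunary Littlewood--Paley projections valid for all $r>2$ and $A_p$ weights'' — but that \emph{is} Lemma~\ref{l.weightLepingle}. You flag this yourself as ``the main obstacle,'' which is an accurate diagnosis that the plan does not close. The paper's proof does something quite different: it proves the pointwise dyadic sharp-function bound
$(T\f)^{\sharp}(x)\le C\,\M_t(\f)(x)$ for the variational operator $T$, and then invokes the Fefferman--Stein inequality $\|T\f\|_{L^p(w)}\lesssim\|(T\f)^{\sharp}\|_{L^p(w)}$ together with $\M_t:L^p(w)\to L^p(w)$ for $t<p$ small enough that $w\in A_{p/t}$. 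The nontrivial input is only the \emph{unweighted} ($w\equiv 1$) case of the inequality, which is already known. The sharp-function bound is obtained at each dyadic $I\ni x$ by subtracting suitable constants $c_j$, splitting $\Delta_jf_j-c_j=g_j+b_j$ into a part localized to $3I$ (handled by the unweighted bound on $L^t$) and a tail part (handled by the decay of $\phi_j$ and a min$\bigl[(2^j|I|)^\epsilon,(2^j|I|)^{-\epsilon}\bigr]$ gain). That bootstrap from Lebesgue to $A_p$ via the $\sharp$-function is the missing idea in your proposal; the long/short decomposition does not supply it and is not adapted to a family that is already discretely lacunary.
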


\begin{proof} Let $\Delta_j$ be Littlewood--Paley projection of $f$ into an enlarged frequency range $\{\frac 1{2C} 2^{-j} < |\xi| < 2C 2^{-j}\}$, such that $\Delta_j f_j = f_j$. It then suffices to show that for any $w\in A_p$  and any family of Littlewood--Paley projections $(\Delta_j)$ and any vector valued function $\f = (f_j)_{j\in \Z}$ we have
\begin{equation}\label{e.lpfam}
\|\sup_{K, N_0< \dots<N_K}  (\sum_{k=1}^K |\sum_{N_{k-1}< j \le N_k} \Delta_j f_j|^r )^{1/r}\|_{L^p(w)} \le C \|(\sum_{j} |f_j|^s)^{1/s}\|_{L^p(w)} \,.
\end{equation}
Let $T \f$ denote the variational operator inside $\|.\|_{L^p(w)}$ in the left hand side of \eqref{e.lpfam}. Then it suffices to show the following pointwise bound for the dyadic sharp maximal function of $T\f$: for any $1<t<\infty$,
\begin{equation}\label{e.Tfsharp}
(T\f)^{\sharp}(x) \le \M_t (\f)(x)\,, \ \ |\f| = (|\sum_j|f_j|^s)^{1/s}\,,
\end{equation}
Indeed, since $w\in A_p$ this will imply that
$$\|T\f\|_{L^p(w)} \le C\|(T\f)^{\sharp}\|_{L^p(w)} \le C\|\M_t (\f)\|_{L^p(w)} \,.$$
We now take $1<t<p$ sufficiently small such that $w\in A_{p/t}$, and the desired estimate \eqref{e.lpfam} then follows:
$$\|T\f\|_{L^p(w)} \le C\|\M_t (\f)\|_{L^p(w)} \le C\|\f \|_{L^p(w)}  \,.$$

It remains to show \eqref{e.Tfsharp}, and we use an argument from \cite{do-muscalu-thiele}. Take any dyadic interval $I$ containing $x$. Let $c_j$ be a constant defined as follows:
$$c_j = \begin{cases}\frac 1 {|I|} \int \phi_j * f_j , & 2^j < \frac 1{|I|} \\
0, & \text{otherwise}\end{cases}$$
where $\phi_j$ is the corresponding convolution function of $\Delta_j$. Then let
$$c_I = \sup_{K, N_0<\dots<N_K} (\sum_k |\sum_{N_{k-1}<j\le N_k} c_j|^r)^{1/r}$$
then it is not hard to see that
$$ 	\Big|\sup_{K, N_0<\dots<N_K} (\sum_k |\sum_{N_{k-1}<j\le N_k} \Delta_j f_j|^r)^{1/r} - c_I\Big|$$
$$\le \sup_{K, N_0<\dots<N_K} (\sum_k |\sum_{N_{k-1}<j\le N_k} (\Delta_j f_j - c_j)|^r)^{1/r} \,.$$
We then decompose 
$$\Delta_j f_j - c_j = g_j + b_j$$
where
$$(g_j, b_j) = \begin{cases}(0, \Delta_j f_j - c_j) & 2^j < \frac 1{|I|} \\
(\Delta_j (f_j 1_{3I}), \Delta_j (f_j 1_{(3I)^c})), & \text{otherwise}
\end{cases} \,.$$
It is not hard to see that  for any $y\in I$ we have
$$|b_j(y)| \le C \M_1 f_j(x) \min\Big[(2^{j}|I|)^{\epsilon}, (2^{j}|I|)^{-\epsilon}\Big] \,.$$
The parameter $\epsilon>0$ here depends on the decay of $\phi_j$ and its derivative. Now, by H\"older's inequality and the known Lebesgue case\footnote{Note that in the Lebesgue case, \eqref{e.lpfam} is equivalent to \eqref{e.rnot2} thanks to boundedness of the vector valued maximal function, this was observed in \cite{do-muscalu-thiele}.}  of \eqref{e.lpfam}, we have
$$\frac 1 {|I|} \int_I \sup_{K, N_0<\dots<N_K} (\sum_k |\sum_{N_{k-1}<j\le N_k}\Delta_j g_j|^r)^{1/r}$$
$$\le  \frac 1{|I|^{1/t}} \|\sup_{K, N_0<\dots<N_K} (\sum_k |\sum_{N_{k-1}<j\le N_k} \Delta_j g_j|^r)^{1/r}\|_t$$
$$\le C\frac 1{|I|^{1/t}} \|(\sum_j |g_j|^s)^{1/s}\|_t \le \frac 1{|I|^{1/t}} \|(\sum_j |f_j 1_{3I}|^s)^{1/s}\|_t \le \M_t(\f)(x) \,.$$
On the other hand,
$$\frac 1 {|I|}\int_I \sup_{K, N_0<\dots<N_K} (\sum_k |\sum_{N_{k-1}<j\le N_k} |b_j|^r)^{1/r}
\le \frac 1 {|I|} \int_I \sum_j |b_j(y)|dy$$
$$\le C \sum_j \min\Big[(2^j |I|)^{\epsilon}, (2^j |I|)^{-\epsilon}\Big] \M_1 f_j(x)$$
$$\le C\sup_j \M_1 f_j(x) \le C \M_1(\f)(x) \le C \M_t (\f)(x) \,.$$
\end{proof}

\section{The main argument and proof of Proposition~\ref{p.restrictedweaktype}}\label{s.mainargument}
Without loss of generality assume that $w(F)>0$ and $w(G)>0$ and
$$\max(w(F), w(G)) = 1 \,.$$
Recall that our aim is to find major subsets  of $F$ and $G$ respectively such that at least one of them has full measure, and if $|f|$ and $|g|$ are supported inside these sets and bounded above by $1$ then
\begin{equation}\label{e.BP}
B_{\P}(f, g) \le C w(F)^{1/p} w(G)^{1-1/p} \, \, 
\end{equation}
for all $p \in (q,\infty)$  such that $1/r>1/q-1/p$. The major subsets will be chosen using the weighted maximal function, see its definition in Section~\ref{s.notation}.\\

\noindent \underline{Case 1: $w(F)\le w(G)$.}
 
We choose $\widetilde F = F$ and  $\widetilde G = G\setminus \Omega$ with
$$\Omega :=  \{\M_{1,w}1_F > Cw(F)\}$$ 
and $C<\infty$ is sufficiently large such that $w(\Omega)<1/2$. 

Fix $q_0 \in (q,\infty)$ very close to $q$. We use the following estimate whose (rather standard) proof is included later: 
\begin{lemma}\label{l.eta} For any $\eta \in (\frac {2q_0}r, 1)$ there is a positive constant $\epsilon=\epsilon(\eta,q_0,r)>0$ such that
\begin{equation}\label{e.eta}
B_{\P}(f,g) \le C  \size(\P)^{1-\eta}  \density(\P)^{\epsilon} w(F)^{\eta/(2q_0)}  \,.
\end{equation}
Furthermore, if the elements of $\P$ are disjoint in the phase plane then a stronger variant of \eqref{e.eta} holds where $\widetilde{\density}(\P)$ is used in place of $\density(\P)$.
\end{lemma}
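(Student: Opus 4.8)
The plan is to combine the size/density decompositions from Lemmas~\ref{l.size} and~\ref{l.density} with the tree estimates of Section~4, interpolating between the $L^1(w)$ and $L^{r'}(w)$ versions of the tree estimate. First I would dispose of the trivial case where $\size(\P)$ or $\density(\P)$ vanishes, and otherwise normalize. The main loop is a standard iterated Calder\'on--Zygmund-type selection: starting from $\P$, apply Lemma~\ref{l.size} repeatedly to peel off, at each dyadic size level $2^n$, a family $\T_n$ of trees so that after removal the remaining collection has size $\le 2^n \size(\P)$; simultaneously apply Lemma~\ref{l.density} to peel off, at each dyadic density level $2^m$, a family of trees $\T_m'$ so that the remaining collection has density $\le 2^m \density(\P)$. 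This decomposes $\P = \bigcup_{n,m} \P_{n,m}$ where $\P_{n,m}$ is a union of trees, $\size(\P_{n,m}) \lesssim \min(2^n, 1)\size(\P)$, $\density(\P_{n,m}) \lesssim \min(2^m,1)\density(\P)$, with $\sum_{T \in \T(\P_{n,m})} w(I_T)$ controlled both by $2^{-r'm} w(G)$ (from Lemma~\ref{l.density}) and by the $L^1(w)$ bound extracted from \eqref{e.topinterval} of Lemma~\ref{l.size}, namely $\lesssim 2^{O(?)}\,(2^n\size(\P))^{-2q_0}\|f\|_{L^{2pq_0}(w)}^{2q_0}$; here since $|f|\le 1_{\widetilde F}$, $\|f\|_{L^{2pq_0}(w)}^{2q_0} \le w(F)^{q_0/(pq_0)}\cdot$something, and more relevantly $\|f\|_{L^{2q_0}(w)}^{2q_0}\le w(F)$ gives the count $\lesssim (2^n\size(\P))^{-2q_0} w(F)$ at the level $p=1$.

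Next, for each piece I would bound $|B_{\P_{n,m}}(f,g)|$ by summing the single-tree estimate \eqref{e.tree-est} over the trees of $\P_{n,m}$ with $s=1$: this gives $|B_{\P_{n,m}}(f,g)| \lesssim \sum_{T} w(I_T)\,\size(\P_{n,m})\density(\P_{n,m}) \lesssim \min(2^n,1)\size(\P)\cdot\min(2^m,1)\density(\P)\cdot \Sigma_{n,m}$, where $\Sigma_{n,m} := \sum_T w(I_T)$. Now I interpolate the two available bounds for $\Sigma_{n,m}$: use $\Sigma_{n,m}\lesssim 2^{-r'm}w(G)$ when $m$ is large (the density side) and $\Sigma_{n,m}\lesssim 2^{-2q_0 n}\,\size(\P)^{-2q_0}\,w(F)$ when $n$ is large (the size side), taking a geometric mean $\Sigma_{n,m}\lesssim (2^{-r'm}w(G))^{1-\theta}(2^{-2q_0 n}\size(\P)^{-2q_0}w(F))^{\theta}$ for a parameter $\theta\in(0,1)$ to be optimized. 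Plugging in and summing the geometric series in $n$ and $m$ requires the exponents of $2^n$ and $2^m$ to be negative after accounting for the $\min(2^\cdot,1)$ factors (which only matter for $n,m\le 0$, where the sums are automatically finite as long as the exponents there are positive); the constraint $1/r > 1/q - 1/p$, i.e. $r' < $ the relevant threshold, together with $q_0$ chosen close enough to $q$, is exactly what makes a valid choice of $\theta$ (equivalently of $\eta$ and $\epsilon$) possible. Collecting powers, $\size(\P)$ appears to a net power $1-\eta$ with $\eta = 2q_0\theta$, $\density(\P)$ to a positive power $\epsilon$ proportional to $1-\theta$ (times the fraction of density-side trees actually using the density bound — one has to be slightly careful to keep $\density(\P)$ to a strictly positive power, which forces $\theta<1$, i.e. $\eta<1$, consistent with the hypothesis $\eta<1$), and $w(F)$ to the power $\theta = \eta/(2q_0)$, which is the claimed $w(F)^{\eta/(2q_0)}$. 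The lower bound $\eta > 2q_0/r$ is precisely what guarantees the $n$-sum of the size-side contribution converges.

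For the final sentence: when the elements of $\P$ are pairwise disjoint in the phase plane, I would repeat the same argument but replace, in the single-tree step, the estimate \eqref{e.tree-est} at $s=1$ by the improved $L^1(w)$ tree estimate \eqref{e.improved-tree-est} of Lemma~\ref{l.improved-tree-est}, which carries $\widetilde{\density}(T)$ in place of $\density(T)$; since the density decomposition of Lemma~\ref{l.density} also has an improved-density variant (noted in the remark after that lemma), the entire iteration goes through verbatim with $\widetilde\density$ everywhere, yielding the stronger conclusion. The main obstacle I anticipate is the bookkeeping of the double (size and density) decomposition and verifying that the two-parameter geometric sum over $(n,m)$ converges under exactly the hypotheses $\eta\in(2q_0/r,1)$ and $1/r>1/q-1/p$ with $q_0$ close to $q$; in particular one must be careful that the $O(k)$/$2^{\beta k}$ factors from \eqref{e.topinterval} and the tail estimate \eqref{e.tree-est-tail} (needed to sum the spatial tails $2^k I_T$ against $w$) do not destroy convergence, which they do not because those tails decay faster than any power while the top-interval overlap grows at most polynomially.
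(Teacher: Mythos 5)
Your plan has the right skeleton — peel off trees using the size and density decompositions, apply the $L^1$ tree estimate tree by tree, and optimize a geometric series by interpolation — and your treatment of the pairwise-disjoint case via Lemma~\ref{l.improved-tree-est} and improved density matches the paper. But the bookkeeping is different from the paper's, and there is a point you gloss over. The paper's proof uses a \emph{single-parameter synchronized} decomposition $\P=\bigcup_{n\in\Z}\P_n$: at level $n$ the size threshold $2^{-n/(2q_0)}w(F)^{1/(2q_0)}$ and the density threshold $2^{-n/r'}$ are both chosen so that the removed trees satisfy $\sum_{T\in\T_n}w(I_T)\lesssim 2^n$, and then the interpolation is carried out on the two $\min$-factors \emph{inside} the single $n$-sum by raising them to powers $\alpha,\beta\in[0,1]$ with $\alpha/(2q_0)+\beta/r'>1$ (possible since $r>2q_0$), producing a two-sided geometric series dominated by its largest term. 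You instead propose a genuinely two-parameter decomposition $\P=\bigcup_{n,m}\P_{n,m}$ and interpolate the two \emph{counting} bounds $\Sigma_{n,m}\lesssim 2^{-r'm}w(G)$ and $\Sigma_{n,m}\lesssim 2^{-2q_0n}\size(\P)^{-2q_0}w(F)$ via a geometric mean.

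For that step to be legitimate you must show both counting bounds hold for the \emph{same} tree collection $\T(\P_{n,m})$, and you do not address how. This is not automatic: if you refine by size first and then by density, Lemma~\ref{l.density} carries no transference statement that would let the size count bound survive the density refinement. One way to repair this is to refine by density first and then by size inside each density piece, using the efficiency property \eqref{e.efficient} of Lemma~\ref{l.size} (applied against the restriction of the density tree collection, which still covers $\P_{n,m}$) to inherit the density count; or, more simply, one can adopt the paper's synchronized single-index scheme. Two smaller mismatches: for Lemma~\ref{l.eta} only the $s=1$ case of \eqref{e.tree-est} is needed, not an interpolation between $s=1$ and $s=r'$ (the $L^{r'}$ case comes up later, in Lemma~\ref{l.delta}); and the spatial tails over $2^kI_T$ are already absorbed in \eqref{e.tree-est} via the remark following Lemma~\ref{l.tree-est}, so you do not need to invoke \eqref{e.tree-est-tail} or the $k$-dependence in \eqref{e.topinterval} separately here.
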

Below we show how Lemma~\ref{l.eta} implies the desired estimate \eqref{e.BP} using an argument from \cites{MTTBiCarl, MTTBiestFourier}. We decompose the original $\P= \bigcup_{k\ge 0} \P^{[k]}$ where
$$\P^{[k]} = \{P \in \P: 2^k \le 1+ \frac{\dist(I_P, \Omega^c)}{|I_P|} < 2^{k+1}\} \,.$$
Observe that if $P\in \P^{[k]}$ then $2^{k+2}I_P \cap \Omega^c \ne\emptyset$. Therefore, using Lemma~\ref{l.sizebound} we obtain
\begin{equation}\label{e.growsizebound}
\size(\P^{[k]}) \le C 2^{O(k)} w(F)^{1/q} \,.
\end{equation}
On the other hand, it is not hard to see that
$$\widetilde{\density}(\P^{[k]}) \le C 2^{-Dk/2} \,.$$
Now, observe that if $k\ge 1$  then the collection $\P^{[k]}$ can be decomposed into $O(1)$ bitile subcollections, such that for any two $P \ne P'$ in a subcollection we have $I_P\times  \omega_P \cap I_{P'} \times \omega_{P'} = \emptyset$. To see this, note that for $k\ge 1$ the length of any nested sequence in $\{I_P: P \in \P^{[k]} \}$ must be $O(1)$. It then follows that we can decompose $\P^{[k]}$ into $O(1)$ subcollections, in each collection the spatial intervals $I_P$ of two bitiles are either the same or disjoint, and via another decomposition (to ensure that any two different bitiles sharing the same spatial interval are far from each other in frequency) we can obtain $O(1)$ subcollections with the desired properties. 

Thus, for the purpose of proving \eqref{e.BP} we may assume without loss of generality  that for $k>k_0$ the elements of $\P^{[k]}$ are disjoint in the phase plane. For those $k$ we have 
$$B_{\P^{[k]}}(f,g)  \le C\size(\P^{[k]})^{1-\eta} [\widetilde{\density}(\P^{[k]})]^\epsilon w(F)^{\eta/(2q_0)}$$
$$\le C 2^{-D\epsilon k/2 } \size(\P^{[k]})^{1-\eta} w(F)^{\eta/(2q_0)} \qquad \text{(since $\supp(g)\subset \Omega^c$)}$$
$$\le  C  2^{-D\epsilon k/2} \Big[2^{O(k)} w(F)^{1/q}\Big]^{1-\eta} w(F)^{\eta/(2q_0)} \,.$$
Choosing $D$ large in the definition of density (certainly $D$ depends on $q,q_0,r,w$) we obtain
$$B_{\P^{[k]}}(f,g) \le C 2^{-\epsilon k} w(F)^{(1-\eta)/q + \eta/(2q_0)} \,, \ \ k> k_0 \,.$$
On the other hand for $0\le k<k_0$ disjointness may not be available, and we only have $\density(\P^{[k]}) = O(1)$, but since $k_0=O(1)$ we also have $\size(\P^{[k]})=O(w(F)^{1/q})$ from \eqref{e.growsizebound}. Using a similar argument as before, we obtain
$$B_{\P^{[k]}}(f,g) \le C w(F)^{(1-\eta)/q + \eta/(2q_0)} \,, \ \ k\le k_0 \,.$$
Thus, summing the above estimates over $k\ge 0$ we obtain
$$B_{\P}(f,g) \le C w(F)^{(1-\eta)/q + \eta/(2q_0)}  \,.$$
For any $p$ such that
$$\frac 1 p < \frac 1 q - \frac 1 r$$ 
we can choose $q_0$ sufficiently close to $q$ and $\eta$ sufficiently close to $2q_0/r$ (keeping $1>\eta > 2q_0/r$ and $q_0>q$) such that
$$(1-\eta)/q + \eta/(2q_0) > 1  / p \,.$$
The desired estimate \eqref{e.BP} now follows immediately, using $w(F)\le 1$.
$$B_{\P}(f,g) \le C w(F)^{1/p} \,.$$

\noindent \underline{Proof of Lemma~\ref{l.eta}:}

We show only the general case when $\P$ is arbitrary. An analogous argumet is used in the  case when any two elements of $\P$ are disjoint 
are disjoint in the phase plane, and the estimate is in terms of the improved density. The main difference is the use of the improved tree estimate (Lemma~\ref{l.improved-tree-est}) in place of the standard tree estimate (Lemma~\ref{l.tree-est}).

For convenience, we denote $S_1=\size(\P)$, $E_1=w(F)^{1/(2q_0)}$ and $D_1=\density(\P)$. Using Lemma~\ref{l.size} and Lemma~\ref{l.density} we can decompose  $\P = \bigcup_{n\in \Z}\P_n$ where each $\P_n$ is union of trees inside a tree collection $\T_n$, such that  
$$\sum_{T\in \T_n} w(I_T) \le C 2^{n}\,,$$
$$\size(\P_n) \le C 2^{-n/(2q_0)} E_1\,,  \ \ \density(\P_n) \le 2^{-n/r'} \,.$$
It then follows from the tree estimate \eqref{e.tree-est} (applied with $L^1$ norm) that
$$B_{\P}(f,g) \le C\sum_{n\in \Z} \sum_{T\in \T_n}w(I_T)\size(T)\density(T)$$
$$\le C\sum_{n\in \Z} 2^n \min(S_1, 2^{-\frac n{2q_0}}E_1) \min(D_1, 2^{-\frac n{r'}}) $$

It follows that for $\alpha, \beta\in [0,1]$ we have
$$B_{\P}(f,g) \le C S_1 D_1\sum_{n\in Z} \min(1, 2^{-\frac{n}{2q_0}} E_1 S_1^{-1})^{\alpha} \min(1, 2^{-n/r'}D_1^{-1})^{\beta}$$
$$\le CS_1 D_1 \sum_{n\in \Z} 2^n \min\Big(1, 2^{-n K} (E_1/S_1)^{\alpha}D_1^{-\beta}\Big)\,,$$
$$K:= \frac{\alpha}{2q_0} + \frac{\beta}{r'} \,.$$
Under the assumption $r>2q$ we can choose $q_0>q$ such that $r>2q_0$. Then we can find $\alpha,\beta \in [0,1]$ such that 
\begin{equation}\label{e.alphabetacondition}
\frac{\alpha}{2q_0} + \frac{\beta}{r'} > 1 \,.
\end{equation}
We then obtain a two-sided geometric series which is bounded above by its largest term. Thus
$$B_{\P}(f,g) \le C S_1 D_1 (E_1/S_1)^{\alpha/K} D_1^{-\beta/K}  = C S_1^{1-\alpha/K} E_1^{\alpha/K} D_1^{1-\beta/K} \,.$$
Let $\eta=\alpha/K$, we have $\eta \in (\frac{2q_0}r,1)$ and in fact varying $\alpha,\beta \in [0,1]$ respecting the condition \eqref{e.alphabetacondition} we can obtain any value of $\eta$ in $(2q_0/r, 1)$. Furthermore
$$\epsilon:=1-\frac{\beta}{K} = 1-r'(1-\frac{\eta}{2q_0}) = \frac{r'}{2q_0}(\eta-\frac{2q_0}r) > 0\,,$$
giving the desired estimate \eqref{e.eta}. This completes the proof of Lemma~\ref{l.eta}.\\

\noindent \underline{Case 2: $w(F)> w(G)$.} We will choose $\widetilde G = G$ and $\widetilde F=F\setminus \Omega$ where
$$\Omega = \{\M_{1,w}1_G > C w(G)\}$$
where $C<\infty$ is sufficiently large such that $w(\Omega)<1/2$.
We will use the following estimate, whose proof is included later:
\begin{lemma}\label{l.delta} Suppose that $\density(\P) \le M w(G)^{1/r'}$ for some $M\ge 1$. Then for any $p<\infty$   there exists a constant $\delta =\delta(p,q,w,r)>0$ such that
\begin{equation}\label{e.delta}
\B_{\P}(f,g) \le CM \size(\P)^\delta  w(G)^{1/p' - 1/r'} \,.
\end{equation}
\end{lemma}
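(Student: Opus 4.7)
The plan is to parallel the argument of Lemma~\ref{l.eta}, now exploiting the hypothesis $D_0 := \density(\P) \le M w(G)^{1/r'}$ (with $M \ge 1$) so that $M$ appears linearly in the final bound. The key normalization is to factor $w(G)^{1/r'}$ out of the density extraction, reducing to a form isomorphic to that of Lemma~\ref{l.eta}. First, I apply Lemma~\ref{l.size} and Lemma~\ref{l.density} iteratively at geometrically decreasing levels to decompose $\P = \bigcup_{n \in \Z} \P_n$ with $\P_n = \bigcup_{T \in \T_n} T$,
$$\sum_{T \in \T_n} w(I_T) \le C 2^n, \quad \size(\P_n) \le C \min\bigl(S_0, E_1 \cdot 2^{-n/(2q_0)}\bigr), \quad \density(\P_n) \le C\, w(G)^{1/r'} \min\bigl(D_0', 2^{-n/r'}\bigr),$$
where $S_0 := \size(\P)$, $D_0' := D_0 / w(G)^{1/r'} \le M$, $E_1 := w(F)^{1/(2q_0)}$, and $q_0 \in (q, r/2)$ is fixed (possible since $r > 2q$).

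By the $L^1$ case of the tree estimate \eqref{e.tree-est},
$$B_{\P}(f,g) \le C \sum_n 2^n\, \size(\P_n)\, \density(\P_n) \le C\, w(G)^{1/r'} \sum_n 2^n \min\bigl(S_0, E_1 2^{-n/(2q_0)}\bigr) \min\bigl(D_0', 2^{-n/r'}\bigr).$$
The remaining sum is precisely the quantity handled in the proof of Lemma~\ref{l.eta}: via $\min(a,b) \le a^{1-\theta} b^{\theta}$ with parameters $\alpha, \beta \in [0,1]$ and amalgamation of the two mins into a single one, the sum reduces to a geometric series bounded (up to a constant depending on $K := \alpha/(2q_0) + \beta/r'$) by its largest term provided $K > 1$. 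Since $r > 2q_0$ is equivalent to the attainability of $K > 1$ on a non-empty open subset of $[0,1]^2$, this condition is met by our choice of $q_0$, yielding
$$B_{\P}(f,g) \le C\, w(G)^{1/r'}\, S_0^{1-\eta}\, (D_0')^{1-\epsilon}\, E_1^{\eta}, \qquad \eta := \alpha/K, \quad \epsilon := \beta/K.$$

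Using $D_0' \le M$, $M \ge 1$, and $1-\epsilon \le 1$, we bound $(D_0')^{1-\epsilon} \le M^{1-\epsilon} \le M$; in Case~2 the normalization $w(F) \le 1$ gives $E_1 \le 1$, so $E_1^{\eta} \le 1$. Setting $\delta := 1 - \eta \in (0, 1 - 2q_0/r)$, which is positive because $r > 2q_0$, this produces $B_{\P}(f,g) \le C M \size(\P)^{\delta} w(G)^{1/r'}$. Finally, $r > 2$ forces $r'/2 < 1 < p'$, equivalently $1/r' > 1/p' - 1/r'$; combined with $w(G) \le 1$, this upgrades the bound to $B_{\P}(f,g) \le C M \size(\P)^{\delta} w(G)^{1/p' - 1/r'}$ for any $p \in (1, \infty)$, as claimed.

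The main obstacle is the simultaneous size--density decomposition in the first step: one must interleave extractions via Lemma~\ref{l.size} and Lemma~\ref{l.density} at geometric scales while tracking the unified mass budget $\sum_{T \in \T_n} w(I_T) \le C 2^n$. This two-parameter stopping-time construction is already implicit in the proof of Lemma~\ref{l.eta}, so the essential content of Lemma~\ref{l.delta} lies in the algebraic reorganization that, under the hypothesis $\density(\P) \le M w(G)^{1/r'}$ with $M \ge 1$, preserves the exponent of $M$ at exactly one.
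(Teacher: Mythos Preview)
Your argument is correct for the lemma as stated and is considerably more elementary than the paper's. You recycle the Lemma~\ref{l.eta} machinery verbatim---a single interleaved size--density decomposition, the $L^1$ tree estimate \eqref{e.tree-est}, and the two-sided geometric series---to reach $B_{\P}(f,g)\le CM\,\size(\P)^{\delta}\,w(G)^{1/r'}$, then pass to exponent $1/p'-1/r'$ by the trivial monotonicity $w(G)\le 1$. The paper proceeds quite differently: after the first decomposition $\P=\bigcup_n\P_n$ it performs a \emph{second} size decomposition $\P_n=\bigcup_{m\ge 0}\P_{n,m}$, invokes the $L^p$ counting-function bound \eqref{e.topinterval} (the third conclusion of Lemma~\ref{l.size}, which your argument never touches), splits each $B_{\P_{n,m}}$ via H\"older against the counting function $N^{[k]}_{n,m}$, and pairs this with the $L^{r'}$ tree estimates \eqref{e.tree-est-core}--\eqref{e.tree-est-tail}, using $\operatorname{supp}(g)\subset G$ to extract an extra factor $w(G)^{1/(p-\epsilon)'-1/r'}$. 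What this buys the paper is the genuinely stronger conclusion $B_{\P}(f,g)\le CM\,\size(\P)^{\delta}\,w(G)^{1/p'}$ for arbitrarily large $p$, and this stronger exponent is precisely what the Case~2 application consumes in the line culminating in $C\,2^{-h}w(G)^{1/p'}$. Your route suffices for \eqref{e.delta} as displayed, but the best $w(G)$-exponent it can ever reach (even sending $p\to\infty$ in the statement) is $1/r$, which does not recover $w(G)^{1/p'}$ once $p>r'$; so the paper's extra layer of decomposition is doing real work downstream, not merely re-deriving what Lemma~\ref{l.eta} already gave.
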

Below we show how Lemma~\ref{l.delta} implies the desired estimate \eqref{e.BP}. Decompose $\P$ into $\bigcup_{h\ge 0}\P^{[h]} $ where
$$\P^{[h]} = \{P \in \P: 2^h \le 1+ \frac{\dist(I_P, \Omega^c)}{|I_P|} < 2^{h+1} \}$$ 
We verify below that
\begin{equation}\label{e.densityPh}
\density(\P^{[h]}) \le C 2^{O(h)} \Big[\sup_{x\in \Omega^c} (\M_{1,w}1_G)(x)\Big]^{1/r'} \le C 2^{O(h)} w(G)^{1/r'}\,,
\end{equation}
here the implicit constant in $O(h)$ depends on the doubling exponent $\gamma$ of $w$. Indeed, let $T$ be any non-empty tree in $\P^{[h]}$. Then it is clear that
$$1+\frac{\dist(I_T,\Omega^c)}{|I_T|} \le 2^{h+1} \,.$$
We then enlarge $I_T$ by a factor of $O(2^h)$ to obtain an interval $J$ such that $J\cap \Omega^c\ne\emptyset$, clearly $w(J)\le C 2^{O(h)} w(I_T)$ and therefore
$$(\frac{1}{w(I_T)} \int \widetilde \chi_{I_T}^D |g|^{r'} w)^{1/r'} \le C2^{O(h)} \Big[\inf_{x\in J} \M_{1,w} 1_G(x) \Big]^{1/r'}$$
from which the estimate \eqref{e.densityPh} follows immediately.

On the other hand, using $\supp(f) \subset \Omega^c$, it follows from Lemma~\ref{l.sizebound} that
$$\size(\P^{[h]}) \le C_N 2^{-Nh} $$
 for any $N>0$. Take $N$ very large in the above estimate, it follows from \eqref{e.delta} and \eqref{e.densityPh} that
$$\B_{\P^{[h]}}(f,g) \le C 2^{-h} w(G)^{1/r'} w(G)^{1/p' - 1/r'}  = C 2^{-h} w(G)^{1/p'}\,,$$
and \eqref{e.BP} now follows from summing these estimates over $h\ge 0$.\\

\noindent \underline{Proof of Lemma~\ref{l.delta}:} 
Fix $q_0 \in (q,\infty)$. Using Lemma~\ref{l.size} and Lemma~\ref{l.density}, we can decompose $\P=\bigcup_{n\in \Z} \P_n$ where $\P_n$ is the union of trees from a tree collection $\T_n$, such that
$$\sum_{T\in \T_n} w(I_T) \le 2^n\,,$$
$$\size(\P_n) \le C 2^{-\frac n {2q_0}}\,, \ \ \density(\P_n) \le C 2^{-n/r'}w(G)^{1/r'} \,.$$
We use Lemma~\ref{l.size} again and decompose $\P_n = \bigcup_{m\ge 0} \P_{n,m}$ where $\P_{n,m}$ is the union of trees from a tree collection $\T_{n,m}$ such that
$$\size(\P_{n,m}) \le C 2^{-(n+m)/(2q_0)}\,,$$
$$\sum_{T\in \T_{n,m}} w(I_T) \le C \sum_{T\in \T_n} w(I_T) \le C 2^n \ \  ,$$ 
$$\|\sum_{T\in \T_{n,m}} 1_{2^k I_T} \|_{L^p(w)} \le C 2^{O(k)} 2^{n+m} w(F)^{1/p} = C 2^{O(k)}2^{n+m} \,. $$
In particular, it follows from the doubling property of $w$ that
$$\|\sum_{T\in \T_{n,m}} 1_{2^k I_T} \|_{L^1(w)} \le C 2^{ \gamma k} 2^n \,.$$
By interpolation, it follows that for any $1<p<\infty$ and any $\epsilon>0$ we have
\begin{equation}\label{e.Lpcounting}
\|\sum_{T\in \T_{n,m}} 1_{2^k I_T} \|_{L^{p-\epsilon}(w)} \le C 2^{O(k)} 2^{m/p'} 2^n \,.
\end{equation}
Here, the implicit constant in $O(k)$ may depend on $p,\epsilon,w$. For convenience, for any $k\ge 0$ let $N^{[k]}_{n,m}$ denote the counting function
$$N^{[k]}_{n,m} = \sum_{T\in \T_{n,m}} 1_{2^k I_T} \,.$$
Decomposing $1 = 1_{I_T}  + \sum_{k\ge 0} (1_{2^{k+1}I_T}  - 1_{2^k I_T})$ for each $T$ and applying H\"older's inequality, we obtain
$$B_{\P_{n,m}}(f,g) \le C\sum_{k\ge -1} B_k(n,m)$$
$$B_{-1}(n,m):=\int (N^{[0]}_{n,m})^{1/r} (\sum_{T\in \T_{n,m}}|1_{I_T} gC_Tf|^{r'})^{1/r'} w dx\,,$$
$$B_k(n,m):= \int (N^{[k+1]}_{n,m} - N^{[k]}_{n,m})^{1/r} (\sum_{T\in \T_{n,m}}|1_{2^{k+1}I_T\setminus 2^k I_T} gC_Tf|^{r'})^{1/r'} w dx\,, \ \ k\ge 0 \,.$$

\noindent \underline{Estimate for $\sum_{n,m} B_{-1}(n,m)$:} Fix $p<\infty$ very large and $\epsilon>0$ very small, such that in particular $p-\epsilon>r$. Apply H\"older's inequality we obtain
$$B_{-1}(n,m)  \le C  \|(N^{[0]}_{n,m})^{1/r}\|_{L^{p-\epsilon}(w)} \|(\sum_{T\in \T_{n,m}}|1_{I_T} gC_Tf|^{r'})^{1/r'} \|_{L^{(p-\epsilon)'}(w)}  \,.$$
Using \eqref{e.Lpcounting}, the first factor can be rewritten and estimated by
$$\|\sum_{T\in \T_{n,m}}1_{I_T}\|_{L^{(p-\epsilon)/r}(w)}^{1/r} \le C 2^{n/r} 2^{(\frac 1 r - \frac 1 p)m}$$
using $p-\epsilon > r$. The second factor is supported inside $supp(g) \subset G$, thus it can be bounded above by
$$\le C w(G)^{\frac 1{(p-\epsilon)'} - \frac 1 {r'}}\|(\sum_{T\in \T_{n,m}}|1_{I_T} gC_Tf|^{r'})^{1/r'} \|_{L^{r'}(w)} $$
$$= C w(G)^{\frac 1{(p-\epsilon)'} - \frac 1 {r'}} (\sum_{T\in \T_{n,m}}\|1_{I_T} gC_Tf \|_{L^{r'}(w)}^{r'} )^{1/r'} $$
Using the tree estimate \eqref{e.tree-est-core}, we can bound the above expression by
$$\le C w(G)^{\frac 1{(p-\epsilon)'} - \frac 1 {r'}}  \Big[\sum_{T\in \T_{n,m}}w(I_T)\Big]^{1/r'}\size(\P_{n,m}) \density(\P_{n,m})$$
$$\le C w(G)^{\frac 1{(p-\epsilon)'} - \frac 1 {r'}} \Big[2^{\frac n{r'}}\Big] \Big[2^{-(n+m)(\frac{1}{2q_0} - \delta)}  \size(\P)^{\delta} \Big]\min\Big(2^{-n/r'} w(G)^{1/r'}, M w(G)^{1/r'} \Big) \,.$$
here $\delta \in (0,\frac 1{2q_0})$ very small to be chosen later. Since $M\ge 1$, it follows that
$$\sum_{m\ge 0} B_{-1}(m,n)$$
$$\le CM w(G)^{\frac 1{(p-\epsilon)'}} \size(\P)^{\delta} \sum_{m\ge 0}  \Big[2^{ \frac nr} 2^{(\frac 1 r - \frac 1 p)m} \Big]\Big[ 2^{\frac n{r'}} 2^{-(n+m)(\frac{1}{2q_0} - \delta)} \min(2^{-n/r'}, 1)  \Big]\,.$$
Since $r>2q$ we can always choose $q_0>q$ such that $r>2q_0$, and then choose $\delta>0$ depends on $q_0,r$ such that 
$$\frac 1 r -  (\frac 1 {2q_0} -\delta)< 0\,,$$
which implies $1/r - 1/p -  1/(2q_0) +\delta  < 0$. Therefore the above summation over $m\ge 0$ converges, and
$$\sum_n \sum_{m\ge 0} B_{-1}(m,n) \le   CM w(G)^{\frac 1{(p-\epsilon)'}}  \size(\P)^{\delta} \sum_{n\in\Z} 2^{n(\frac 1 r - \frac 1 {2q_0} + \delta)} \min(1, 2^{\frac n {r'}}) \,.$$
Since 
$$\frac 1 r - \frac 1{2q_0} < 0 < \frac 1 r - \frac 1 {2q_0} + \frac 1 {r'}$$
we can refine our previous choice of $\delta = \delta(q_0,r)>0$ such that the above estimate of $\sum_n \sum_{m\ge 0} B_{-1}(m,n)$ remains a two-sided geometric series. It follows that
$$\sum_n \sum_{m\ge 0} B_{-1}(m,n)  \le CM w(G)^{\frac 1{(p-\epsilon)'}} \size(\P)^{\delta} \,.$$
Since we can choose $p<\infty$ arbitrarily large and since $w(G)\le 1$, it follows that
$$\sum_n \sum_{m\ge 0} B_{-1}(m,n) \le CM w(G)^{1/p'} \size(\P)^{\delta}$$
for any $ p <\infty$.\\

\noindent \underline{Estimate for $\sum_{n,m} B_k(n,m)$:} The argument is similar to the above estimate for the sum of $B_{-1}(n,m)$, with the following difference: we will collect some power $2^{k}$, and we will gain the decay factor $2^{-Nk}$ from the tree estimate \eqref{e.tree-est-tail} where $N$ could be chosen arbitrarily large. We obtain, via a similar argument and by choosing $N$ large enough, the following estimate
$$\sum_n \sum_{m\ge 0} B_{k}(m,n) \le C 2^{-k} Mw(G)^{1/p'}\size(\P)^{\delta} $$
for any $p <\infty$.  

Summing over $k\ge -1$, we obtain the desired estimate \eqref{e.delta}. This completes the proof of Lemma~\ref{l.delta}.

\begin{bibdiv}
\begin{biblist}

\bib{MR2880218}{article}{
   author={Bennett, Jonathan},
   author={Harrison, Samuel},
   title={Weighted norm inequalities for oscillatory integrals with finite
   type phases on the line},
   journal={Adv. Math.},
   volume={229},
   date={2012},
   pages={2159--2183},
}

\bib{bourgain}{article}{
   author={Bourgain, Jean},
   title={Pointwise ergodic theorems for arithmetic sets},
   note={With an appendix by the author, Harry Furstenberg, Yitzhak
   Katznelson and Donald S. Ornstein},
   journal={Inst. Hautes \'Etudes Sci. Publ. Math.},
   date={1989},
   pages={5--45},
}

\bib{MR0199631}{article}{
  author={Carleson, Lennart},
  title={On convergence and growth of partial sums of Fourier series},
  journal={Acta Math.},
  volume={116},
  date={1966},
  pages={135--157},
}

\bib{weightWalsh}{article}
{
  author={Do, Yen},
  author={Lacey, Michael},
  title={Weighted bounds for variational Walsh--Fourier series},
  journal={J. Fourier Anal. Appl.}, 
  date={to appear},
}

\bib{do-muscalu-thiele}{article}
{
  author={Do, Yen},
  author={Muscalu, Camil},
  author={Thiele, Christoph},
  title={Variational estimates for paraproducts},
  journal={Rev. Mat. Iberoamericana},
  date={to appear},
}

\bib{fefferman}{article}{
  author={Fefferman, Charles},
  title={Pointwise convergence of Fourier series},
  journal={Ann. of Math. (2)},
  volume={98},
  date={1973},
  pages={551--571},
}

\bib{MR2115460}{article}{
  author={Grafakos, Loukas},
  author={Martell, Jose Maria},
  author={Soria, Fernando},
  title={Weighted norm inequalities for maximally modulated singular integral operators},
  journal={Math. Ann.},
  volume={331},
  date={2005},
  pages={359--394},
}

\bib{MR0238019}{article}{
  author={Hunt, Richard A.},
  title={On the convergence of Fourier series},
  conference={ title={Orthogonal Expansions and their Continuous Analogues (Proc. Conf., Edwardsville, Ill., 1967)}, },
  book={ publisher={Southern Illinois Univ. Press}, place={Carbondale, Ill.}, },
  date={1968},
  pages={235--255},
}

\bib{hunt-young}{article}{
  author={Hunt, Richard A.},
  author={Young, Wo Sang},
  title={A weighted norm inequality for Fourier series},
  journal={Bull. Amer. Math. Soc.},
  volume={80},
  date={1974},
  pages={274--277},
}

\bib{hytonen-lacey}{article}{
  author={Hyt\"onen, Tuomas P.},
  author={Lacey, Michael T.},
  title={Pointwise convergence of vector-valued Fourier series},
  journal={preprint, arXiv:1205.0261},
}

\bib{jones-seeger-wright}{article}{
   author={Jones, Roger L.},
   author={Seeger, Andreas},
   author={Wright, James},
   title={Strong variational and jump inequalities in harmonic analysis},
   journal={Trans. Amer. Math. Soc.},
   volume={360},
   date={2008},
   pages={6711--6742},
}

\bib{lacey-thiele-calderon}{article}{
  author={Lacey, Michael},
  author={Thiele, Christoph},
  title={On Calder\'on's conjecture},
  journal={Ann. of Math. (2)},
  volume={149},
  date={1999},
  pages={475--496},
}

\bib{lacey-thiele-carleson}{article}{
  author={Lacey, Michael T.},
  author={Thiele, Christoph},
  title={A proof of boundedness of the Carleson operator},
  journal={Math. Res. Lett.},
  volume={7},
  date={2000},
  pages={361\ndash 370},
}

\bib{lacey-thiele-bht}{article}{
   author={Lacey, Michael},
   author={Thiele, Christoph},
   title={$L^p$ estimates on the bilinear Hilbert transform for
   $2<p<\infty$},
   journal={Ann. of Math. (2)},
   volume={146},
   date={1997},
   pages={693--724},
}

\bib{lepingle}{article}{
   author={L{\'e}pingle, D.},
   title={La variation d'ordre $p$ des semi-martingales},
   language={French},
   journal={Z. Wahrscheinlichkeitstheorie und Verw. Gebiete},
   volume={36},
   date={1976},
   pages={295--316},
}

\bib{XiaochunLi}{article}{
  author={Li, Xiaochun},
  title={Personal communication},
}

\bib{MR0293384}{article}{
  author={Muckenhoupt, Benjamin},
  title={Weighted norm inequalities for the Hardy maximal function},
  journal={Trans. Amer. Math. Soc.},
  volume={165},
  date={1972},
  pages={207--226},
}

\bib{MTTBiCarl}{article}{
  author={Muscalu, C.},
  author={Tao, T.},
  author={Thiele, C.},
  title={The bi-Carleson operator},
  journal={Geom. Funct. Anal.},
  volume={16},
  date={2006},
  pages={230--277},
}

\bib{MR1887641}{article}{
   author={Muscalu, Camil},
   author={Tao, Terence},
   author={Thiele, Christoph},
   title={Multi-linear operators given by singular multipliers},
   journal={J. Amer. Math. Soc.},
   volume={15},
   date={2002},
   pages={469--496},
}

\bib{MTTBiestFourier}{article}{
  author={Muscalu, Camil},
  author={Tao, Terence},
  author={Thiele, Christoph},
  title={$L^p$ estimates for the biest. II. The Fourier case},
  journal={Math. Ann.},
  volume={329},
  date={2004},
  pages={427--461},
}

\bib{oberlin-et-al}{article}
{
      author={Oberlin, Richard},
      author={Seeger, Andreas},
      author={Tao, Terence},
      author={Thiele, Christoph},
      author={Wright, James},
       title={A variation norm {C}arleson theorem},
        date={2012},
     journal={J. Eur. Math. Soc. (JEMS)},
      volume={14},
       pages={421\ndash 464},
}

\bib{pisier-xu}{article}{
   author={Pisier, Gilles},
   author={Xu, Quan Hua},
   title={The strong $p$-variation of martingales and orthogonal series},
   journal={Probab. Theory Related Fields},
   volume={77},
   date={1988},
   pages={497--514},
}

\bib{rubio-de-francia}{article}{
  author={Rubio de Francia, Jos{\'e} L.},
  title={A Littlewood-Paley inequality for arbitrary intervals},
  journal={Rev. Mat. Iberoamericana},
  volume={1},
  date={1985},
  pages={1--14},
}

\bib{MR0241885}{article}{
  author={Sj{\"o}lin, Per},
  title={An inequality of Paley and convergence a.e. of Walsh-Fourier series. },
  journal={Ark. Mat.},
  volume={7},
  date={1969},
  pages={551--570 (1969)},
}

\bib{stein-weiss}{book}{
   author={Stein, Elias M.},
   author={Weiss, Guido},
   title={Introduction to Fourier analysis on Euclidean spaces},
   note={Princeton Mathematical Series, No. 32},
   publisher={Princeton University Press},
   place={Princeton, N.J.},
   date={1971},
   pages={x+297},
}

\bib{ThieleHabilitation}{article}{
  author={Thiele, Christoph},
  title={On the Bilinear Hilbert transform},
  journal={Habilitationsschrift, Kiel},
  date={1998},
}

\bib{thiele-CBMS}{book}{
   author={Thiele, Christoph},
   title={Wave packet analysis},
   series={CBMS Regional Conference Series in Mathematics},
   volume={105},
   publisher={Published for the Conference Board of the Mathematical
   Sciences, Washington, DC},
   date={2006},
   pages={vi+86},
}

\end{biblist}
\end{bibdiv}

\end{document}